\newcommand{\mm}{\mathrm}
\newcommand{\ml}{\mathcal}
\newcommand{\be}{\begin{equation}}
\newcommand{\bea}{\begin{equation}\begin{aligned}}
		\newcommand{\beas}{\begin{equation*}\begin{aligned}}
				\newcommand{\eeas}{\end{aligned}\end{equation*}}
		\newcommand{\eea}{\end{aligned}\end{equation}}
\newcommand{\ee}{\end{equation}}
\renewcommand{\div}{{\rm div }}
\begin{document}
\begin{CJK*}{GBK}{song}
	\begin{frontmatter}
		\title{On Temporal Decay of Compressible  Hookean Viscoelastic Fluids \\ with
			Relatively Large Elasticity Coefficient}
		
		\author[mml1]{Shengbin Fu}
		\ead{fsb20249100@scut.edu.cn}
		\address[mml1]{School of Mathematics, South China University of Technology, Guangzhou, 510641, P.R. China.}
		\author[mml2]{Wenting Huang}
		\ead{hwting702@163.com}
		\address[mml2]{School of Mathematical Sciences, Beijing
			Normal University, Beijing 100875, China.}
		\author[FJ]{Fei Jiang\corref{cor1}}\cortext[cor1]{Corresponding author.}
		\ead{jiangfei0591@163.com}
		\address[FJ]{School of Mathematics and Statistics, Fuzhou University, Fuzhou, 350108, China.}
		\begin{abstract}
			Recently, Jiang--Jiang (J. Differential Equations 282, 2021) showed the existence of unique strong solutions in spatial periodic domain (denoted by $\mathbb{T}^3$), whenever the elasticity coefficient is  larger than the initial velocity perturbation of the rest state. Motivated by Jiang--Jiang's result, we revisit the Cauchy problem of the compressible viscoelastic fluids in Lagrangian coordinates. Employing an energy method with temporal weights and an additional asymptotic stability condition of initial density in Lagrangian coordinates, we extend the Jiang--Jiang's result with exponential decay-in-time in $\mathbb{T}^3$ to the one with algebraic decay-in-time in the whole space $\mathbb{R}^3$. Thanks to the algebraic decay of solutions established by the energy method with temporal weights, we can further use the spectral analysis to improve the temporal decay rate of solutions. In particular, we find that the $k$-th order spatial derivatives of both the density and deformation perturbations converge to zero in $L^2(\mathbb{R}^3)$   at a rate of $(1+t)^{-\frac{3}{4}-\frac{k+1}{2}}$, which is faster than the decay rate $(1 +t)^{-\frac{3}{4}-\frac{k}{2}}$ obtained by Hu--Wu  (SIAM J. Math. Anal. 45, 2013) for $k=0$ and $ 1$. In addition, it's well-known that the decay rate  $(1+t)^{-\frac{3}{4}-\frac{k}{2}}$ of the density perturbation  is optimal in the compressible Navier--Stokes equations (A.~Matsumura, T.~Nishida, Proc. Jpn. Acad. Ser-A. 55, 1979). Therefore, our faster temporal decay rates indicate that the elasticity accelerates the decay of the density perturbation  after the rest state of a compressible viscoelastic fluid being perturbed.
		\end{abstract}
		\begin{keyword}Cauchy problem,
			compressible viscoelastic fluids, energy methods with temporal
			weights, temporal decay rates, spectral analysis, large initial velocity.
		\end{keyword}
	\end{frontmatter}
	
	
	\newtheorem{thm}{Theorem}[section]
	\newtheorem{lem}{Lemma}[section]
	\newtheorem{pro}{Proposition}[section]
	\newtheorem{concl}{Conclusion}[section]
	\newtheorem{cor}{Corollary}[section]
	\newproof{pf}{Proof}
	\newdefinition{rem}{Remark}[section]
	\newtheorem{definition}{Definition}[section]

	\section{Introduction}\label{introud}
	\numberwithin{equation}{section}
	
	Viscoelastic materials include a wide range of fluids with elastic properties, as
	well as solids with fluid properties. The models of viscoelastic fluids formulated by Oldroyd, in particular the classical Oldroyd model,
	have been studied by many authors. In this paper, we consider the following compressible Oldroyd model that is defined on $\mathbb{R}^3$, and includes a viscous stress component
	and a stress component for a neo-Hookean solid
	\cite{MR2922368}:
	\begin{equation}
		\label{1.1}
		\begin{cases}
			\rho_t+\mm{div}(\rho v)=0,\\
			\rho v_t+\rho v\cdot\nabla v+\nabla P(\rho)= \mu\Delta v
			+\lambda \nabla \mm{div}v +  \kappa\mm{div}(
			{FF^{\top}}/{\det F}), \\
			F_t+ v\cdot\nabla F =\nabla v F.
		\end{cases}
	\end{equation}
	Here the unknowns $\rho(x,t)$, $v(x,t)$ and $F(x,t)$ denote the density, the velocity and the
	deformation gradient (a $3\times 3$ matrix
	valued function), respectively. The hydrodynamic
	pressure $P(\rho) \in C^2(\mathbb{R})$ is an increasing and
	convex function for $\rho > 0$. Besides, the viscosity
	coefficients $\mu $ and $\lambda $ satisfy the strongly
	elliptic conditions: $ \mu >0$ and $3\lambda +2\mu >0$. For the
	convenience, the elasticity coefficient $\kappa$ is assumed
	to be a positive constant, and the term $\kappa\mm{div}(
	FF^{\top}/\det F)$ is referred as the elasticity.
	
	The local existence of strong solutions of the initial value problem of the system $(1.1)$ was shown by Hu--Wang \cite{MR2652169}. The global(-in-time) existence of  strong solutions of the initial value problem of the system $(1.1)$ was proved by Hu--Wang \cite{MR2737829}, Qian--Zhang \cite{MR2729321}  and Hu--Wu \cite{MR3101093}, provided that the initial perturbation  $\left(\rho^{0}-1, v^{0}, F^{0}-I\right)$  is sufficiently small, where the steady state of the system \eqref{1.1} is $(1,0,I)$ with  $3\times 3$ identity matrix $I$. Hu--Wu \cite{MR3101093} also showed that if the initial perturbation  $\left(\rho^{0}-1, v^{0}, F^{0}-I\right)$  belongs to  $L^{1}\left(\mathbb{R}^{3}\right) \cap H^{2}\left(\mathbb{R}^{3}\right)$, then it holds for any $t \geqslant 0 $ that
	\begin{align}
		& \|\nabla^k(\rho(t) -1, v(t), F(t)-I)\|_{L^2(\mathbb{R}^3)}\leqslant C(1 +t)^{-\frac{3}{4}-\frac{k}{2}},\ k=0,\ 1, \label{2022410252025}
	\end{align} via the Fourier splitting method and the Hodge decomposition. Here and in what follows, the letter $C$ denotes the positive constant, which depends on the given initial data for the system \eqref{1.1}. In particular, using the interpolation inequality,  the  $L^{p}$-norm ($2\leqslant p\leqslant 6$) decay estimates are easily further obtained from \eqref{2022410252025}. Li--Wei--Yao \cite{MR3576321,MR3538871} extended the above result to the case  $2 \leqslant p \leqslant \infty$, and obtained the decay estimates in  $L^{2}$  of higher-order spacial derivatives:
	\begin{equation*}
		\left\|\nabla^{k} \left(\rho(t)-1, v(t), F(t)-I \right)\right\|_{L^{2}(\mathbb{R}^3)} \leqslant C(1+t)^{-\frac{3}{4}-\frac{k}{2}},\ k=0,\ 1,\ldots,\ N-1,
	\end{equation*}
	provided that the initial data satisfy that $\left(\rho^{0}-1, v^{0}, F^{0}-I\right)$  belongs to  $H^{N}$ with $N \geqslant 3$, and is small in  $L^{1} \cap H^{3}$. This follows from the diffusive aspect of the system $(1.1)$. We also refer to  \cite{MR3812223,MR3584369,MR3927502,MR3632688,MR4435915,MR4350195} in relevant progresses and \cite{MR4335131,MR3909068,MR4059368}  for the decay of solutions to the other closely related models in viscoelastic fluids.  Later on, Ishigaki \cite{MR4152207} further showed
	that if the initial perturbation  $\left(\rho^{0}-1, v^{0}, F^{0}-I\right)$  is sufficiently small in  $L^{1} \cap H^{3}$, then the global classical solution satisfies the following  $L^{p}$  decay estimate:
	\begin{equation*}
		\left\|(\rho(t)-1, v(t), F(t)-I)\right\|_{L^{p}(\mathbb{R}^3)} \leqslant C(1+t)^{-\frac{3}{2}\left(1-\frac{1}{p}\right)-\frac{1}{2}\left(1-\frac{2}{p}\right)},\ 1<p\leqslant \infty.
	\end{equation*}
	The above result improves the decay rate of the  $L^{p}$  norm of $v$  obtained in \cite{MR3101093,MR3576321} for  $p>2$, and clarifies the diffusion wave phenomena caused by the sound wave and the elastic shear wave in viscoelastic fluids.
	
	Recently Jiang--Jiang \cite{MR4217912} further proved the existence of unique strong solutions in spatial periodic domain (denoted by $\mathbb{T}^3$), whenever the elasticity coefficient is relatively larger than initial velocity perturbation of the rest state. In addition, they further established an interesting result that the system \eqref{1.1}  in Lagrangian coordinates can be approximately by a linear system for sufficiently large $\kappa$. Motivated by Jiang--Jiang's results, we revisit the Cauchy problem of the compressible viscoelastic flows in Lagrangian coordinates. Employing an energy method with temporal weights  and an asymptotic stability condition of initial density in  Lagrangian coordinates, we extend the Jiang--Jiang's result  with exponential decay-in-time  in $\mathbb{T}^3$ to the one with  algebraic decay-in-time  in the whole space $\mathbb{R}^3$, see Theorems \ref{1.12} and \ref{1.10}. Thanks to the algebraic decay-in-time of solutions established by the energy method with temporal weights, we can further use the spectral analysis to improve the temporal decay rate of solutions. In particular, we have
	\begin{align}
		& \|\nabla^k (\rho(t) - 1, F(t)-I)\|_{L^2(\mathbb{R}^3)}\leqslant C(1 +t)^{-\frac{3}{4} -\frac{k+1}{2}},\ k=0,\ 1. \label{1.27}
	\end{align}
	The above  decay rate obviously is faster than the one of $\|\nabla^k(\rho(t) -1, F-I)\|_{L^2(\mathbb{R}^3)}$ in  \eqref{2022410252025}  obtained by Hu--Wu. In addition, it's well-known that the decay rate  $(1 +t)^{-\frac{3}{4}-\frac{k}{2}}$ of the  density perturbation  is optimal in the compressible Navier--Stokes equations  \cite{MANTTP351}.  Therefore our faster temporal decay rates present that the elasticity accelerates the decay of perturbation of density after the rest state of a compressible viscoelastic fluid being perturbed. In addition, we remark that the decay result in Theorem \ref{1.24} can be further extended to $L^p$ with $p>2$ by following Ishigaki's argument in \cite{MR4152207}.
	
	Finally, we mention that  the other mathematical topics for viscoelastic fluids
	have also been widely investigated, such as the incompressible limit \cite{MR2191777}, the global existence of weak solutions \cite{MR3812223},
	the corresponding incompressible case \cite{MR3434615,MR2273974,MR2712454,MR2393434,MR2165379,MR2383932},
	the  global existence of  classical solutions   with small initial perturbations for  the incompressible inviscid case \cite{MR4451474,MR3552009,MR2142629,MR2358646,MR3391913,MR3626302}.
	and so on.   We also refer the readers to \cite{MR4064197,MR3010381,MR1763488,MR2990047} and the references cited therein for the local/global existence of solutions to the other closely
	related models in viscoelastic fluids.
	
	\subsection{Reformulation of the system \eqref{1.1} in Lagrangian coordinates}
	
	Now we reformulate the motion equations \eqref{1.1} in Lagrangian coordinates.
	Let the flow map
	$\zeta$ be the solution to
	\begin{equation}
		\label{1.3}
		\begin{cases}
			\displaystyle\frac{\mm{d}}{\mm{d}t} \zeta(y,t)= v(\zeta(y,t),t),
			\\
			\zeta(y,0)=\zeta^0(y),
		\end{cases}
	\end{equation}
	where $\zeta^0(y): \mathbb{R}^3 \rightarrow \mathbb{R}^3$
	is an invertible map satisfying $\det (\nabla \zeta^0)
	\neq0$.
	Then, the deformation
	tensor $\tilde{F}(y, t)$ in Lagrangian coordinates  takes the form
	\begin{align}\label{1.7}
		\tilde{F}(y,t)=\nabla \zeta(y,t),\ \mbox{i.e.},\ \tilde{F}_{ij}=\partial_{j}\zeta_i(y,t) .
	\end{align}
	Thus the deformation tensor in Eulerian coordinates is given by the formula
	$$ F(x,t)=\nabla\zeta(\zeta^{-1}(x,t),t). $$
	Using the chain rule, we easily check that  $F(x, t)$ in \eqref{1.7}
	automatically satisfies  $\eqref{1.1}_3$. Hence, for the initial
	data $F^0$ enjoys the form
	\begin{align}
		\label{1.6}
		F^0=\nabla \zeta^0(\zeta^{-1}_0,0),
	\end{align}
	the solution $\tilde{F}(y,t)$ can be explicitly expressed
	by \eqref{1.7} in Lagrangian coordinates. Here and in what follows, we also use the notation $f_0$, as well as $f^0$, to represent the initial data of $f$.
	
	Before further rewriting the elasticity $\div(FF^\top)$ in  Lagrangian coordinates, we shall introduce some differential operators. Let $\mathcal{A} =(\mathcal{A}_{ij})_{3 \times 3} = (\partial_j \zeta_i)_{3 \times 3}^{-\top}$. Then, for
	any given scalar function $f$ and for any given vector function
	$X:=(X_1,X_2,X_3)^{\top}$, the differential operators
	$\nabla_\mathcal{A}$,
	$\mm{div}_\ml{A}$ and $\Delta_{\ml{A}}$ are defined by
	$\nabla_{\ml{A}}f:=(\ml{A}_{1k}\partial_k f,
	\ml{A}_{2k}\partial_kf,\ml{A}_{3k}\partial_kf)^{\top}$,
	$\mm{div}_{\ml{A}}(X_1,X_2,X_3)^{\top}:=\ml{A}_{lk}\partial_k
	X_l$
	and
	$\Delta_{\mathcal{A}}f:=\mm{div}_{\ml{A}}\nabla_{\ml{A}}f$,
	respectively. Here  we have used the Einstein convention of summation over repeated indices, and $\partial_k:=\partial_{y_k}$.
	
	Setting $J=\det \nabla \zeta$, the first expression in \eqref{1.7} implies that
	$\det F |_{x=\zeta}=J$. By the direct calculation, we have
	$$\partial_k(J\mathcal{A}_{ik})=0,$$
	which, together with the relation $\mathcal{A}^{\top}
	\nabla\zeta =I$, leads to
	\begin{align}
		\mm{div}( {FF^{\top}}/{\det F}) |_{x=\zeta}=&
		\mm{div}_{\mathcal{A}}( J^{-1}\nabla \zeta \nabla
		\zeta^{\top} )=J^{-1}
		\mm{div}({\mathcal{A}}^{\top}(\nabla \zeta \nabla
		\zeta^{\top})) =J^{-1}\Delta\zeta .  \label{1.9}
	\end{align}
	Under the condition \eqref{1.6}, thus the Cauchy problem of the system \eqref{1.1} in Lagrangian coordinates can be rewritten as follows:
	\begin{equation}
		\label{1.17}
		\begin{cases}
			\zeta_t =u,\\
			\varrho_t+\varrho\mm{div}_{\ml{A}} u=0 ,\\
			\varrho u_t+ \nabla_{\ml{A}} P(\varrho)= \mu
			\Delta_{\ml{A}} u
			+\lambda \nabla_{\ml{A}} \mm{div}_{\ml{A}}u +\kappa J^{-1} \Delta
			\zeta  ,\\
			(\zeta, \varrho, u)|_{t=0} = (\zeta^0, \varrho^0, u^0),
		\end{cases}
	\end{equation}
	where $(\varrho, u)(y,t) := (\rho, v) (\zeta(y, t), t)$.
	Obviously $(\zeta,{\varrho},u)=(y,\bar{\rho},0)$ represents the rest state of the equations \eqref{1.17}$_1$--\eqref{1.17}$_3$, where $\bar{\rho}$ is a positive constant.
	
	Viscoelasticity is a material property that exhibits both viscous and elastic characteristics when undergoing deformation.
	In particular, a viscoelastic fluid strains when stretched, and quickly returns to its rest state
	once the stress is removed. Therefore, we naturally have the following asymptotic behaviors after the rest state of the viscoelastic fluid being perturbed:
	\begin{equation}
		\label{1.18}
		\zeta(y,t)\to y\mbox{ and } \varrho(y,t)\to \bar{\rho}
		\mbox{ as }t\to \infty.
	\end{equation}
	It holds from $\eqref{1.17}_1$ that
	\begin{equation}
		\label{1.19}
		J_t=J\mm{div}_{\mathcal{A}}u,
	\end{equation}
	which, combining with $\eqref{1.17}_2$, implies that
	\begin{equation}
		\label{1.20}
		\partial_t(\varrho J)=0.
	\end{equation}
	Thanks to \eqref{1.18} and
	\eqref{1.20}, we can obtain that
	$$\varrho^0J^0=\varrho
	J=\bar{\rho},$$ which yields $\varrho = \bar{\rho}J^{-1}$
	for the initial data $(\varrho^0,J^0)$ satisfying
	\begin{align}
		\label{abjlj0i}
		\varrho^0 =\bar{\rho}J^{-1}_0,
	\end{align}
	which is called the asymptotic stability condition of $\varrho^0$.
	\emph{Without loss of generality, in what follows we take $ \bar{\rho} =1$ for the sake of the simplicity}.
	Thanks to the additional asymptotic stability condition of $\varrho^0$ in \eqref{abjlj0i}, the
	Cauchy problem \eqref{1.17} reduces to
	\begin{equation}
		\label{1.17n}
		\begin{cases}
			\zeta_t =u,\\
			J^{-1} u_t+ \nabla_{\ml{A}} P(\varrho)= \mu
			\Delta_{\ml{A}} u
			+\lambda \nabla_{\ml{A}} \mm{div}_{\ml{A}}u +\kappa J^{-1}\Delta
			\zeta ,\\
			(\zeta, \varrho, u)|_{t=0} = (\zeta^0, \varrho^0, u^0),
		\end{cases}
	\end{equation}
	Let $\eta(y, t):= \zeta(y, t) -y$ and $\mathcal{A}:=(\nabla\eta+I)^{-\top}$. The problem \eqref{1.17n} can be further rewritten into the following
	nonhomogeneous form:
	\begin{equation}
		\label{1.21}
		\begin{cases}
			\eta_t =u,\\
			u_t -\nabla( P'(1)
			\mm{div}\eta+\lambda\mm{div} u)-\Delta(\mu u +\kappa\eta)=
			\mathcal{N},\\
			(\eta, u)|_{t =0} = (\eta^0, u^0),
		\end{cases}
	\end{equation}
	where $\tilde{\mathcal{A}}
	:=\mathcal{A} -I$ and
	\begin{align}
		\mathcal{N}:= &\mu  (
		\mm{div}_{\tilde{\ml{A}}}\nabla_{\ml{A}}u
		+ \mm{div} \nabla_{\tilde{\ml{A}}}u)+\lambda  (
		\nabla_{\tilde{\ml{A}}}\mm{div}_{\ml{A}}u
		+ \nabla\mm{div}_{\tilde{\ml{A}}}u) \nonumber\\
		& + (J-1)(\mu \Delta_{\mathcal{A}}u+\lambda
		\nabla_{\mathcal{A}}\mm{div}_{\mathcal{A}}u)-(J-1)
		\nabla_{\mathcal{A}}
		P( J^{-1})-\nabla_{\tilde{\mathcal{A}}}
		P( J^{-1})  \nonumber\\
		& -\nabla \left(
		{P}'(1) (J^{-1}-1+\mm{div}\eta)
		+\int_{0}^{ (J^{-1}-1)}(
		(J^{-1}-1)-z)\frac{\mm{d}^2}{\mm{d}z^2} P
		(1 +z)\mm{d}z\right).
		\label{1.4}
	\end{align}
	
	\subsection{Main results}
	
	Before stating our main results, we shall introduce
	basic notations which will be used repeatedly throughout this paper.
	\begin{enumerate}[(1)]
		\item Simplified notations:
		\begin{align*}
			& I_t := (0, t), \ \overline{I_t}:=\mbox{the closure of } I_t, \
			\mathbb{R}_0^+ := [0, \infty),
			\nonumber \\& L^p  = W^{0, p}(\mathbb{R}^3),\  H^m := W^{m, 2}(\mathbb{R}^3),\ \|\cdot\|_m:=\|\cdot\|_{H^m(\mathbb{R}^3)},\\
			& (f \left|\, g\right.)_{H^m}
			: =\sum\nolimits_{|\alpha| \leqslant m}
			\int_{\mathbb{R}^3}\partial^\alpha f \cdot
			\partial^\alpha g \mm{d}y,\ f\lesssim g\mbox{ represents that }f\leqslant cg,\\
			&f\approx
			g \mbox{ means  that }f\lesssim g\mbox{ and }g\lesssim f\mbox{ hold}.
		\end{align*}
		Here $ t\in \mathbb{R}^+$, $1\leqslant p\leqslant \infty$,
		and  $m$ is a non-negative integer. The letter $c>0$ denotes a general constant that depends at most on $P(\cdot)$, $\mu$ and $\lambda$, and it may vary from line/place to line/place. However sometimes we also renew to denote the constant $c$ by $c_i$ for $1\leqslant i\leqslant 4$, under such case, $c_i$ does not vary from line/place to line/place. 
		In addition,
		\begin{align}
			& H_*^j: = \{\xi \in H^j~|~\phi:= \xi(y, t) +y: \mathbb{R}^3
			\rightarrow \mathbb{R}^3 \text{ is a } C^1(\mathbb{R}^3)
			\text{-diffeomorphic mapping}\nonumber\\
			&\qquad \qquad \qquad\quad \mbox{ and }\det(\nabla\phi) \geqslant 1/2\},\nonumber
		\end{align}
		where $j$ is a positive integer.
		\item Energy/dissipation functionals:
		\begin{align}
			\mathcal{E}(t) &: = \|u\|_{2}^2
			+\kappa \|\nabla \eta\|_2^2 +(t +1) (\|\nabla
			u\|_2^2 + \kappa \|\Delta \eta\|_2^2)\nonumber\\
			&\quad  +(t +1)^2 (\|\nabla^2 u\|_1^2 +\kappa
			\|\Delta \nabla  \eta\|_1^2), \nonumber\\
			\mathcal{D}(t) & := \|\nabla
			u\|_2^2 +\kappa \|\Delta \eta\|_2^2 +(t
			+1)(\|\Delta u\|_2^2  +\kappa \|\nabla \Delta
			\eta\|_1^2) +(t
			+1)^2 \|\nabla \Delta u\|_1^2.\nonumber
		\end{align}
	\end{enumerate}
	
	Now we state our first  result for the global existence of  unique classical
	solutions of the Cauchy problem \eqref{1.21}, where the initial velocity is smaller than  the elasticity coefficient.
	\begin{thm}\label{1.12}
		Let $\eta^0 \in H^4 \cap H^4_*$ and $u^0 \in
		H^3$. There are two positive constant $c_1$ (sufficiently small)
		and $c_2$ such that, if the elasticity coefficient $\kappa$ is relatively large and satisfies the following condition:
		\begin{align}
			\kappa^{-1} \max\left\{\sqrt{2 c_2
				\mathcal{E}^0}, (2 c_2\mathcal{E}^0)^2 \right\} \leqslant {c_1},\label{1.22}
		\end{align}
		then the Cauchy problem \eqref{1.21} admits a unique
		global classical solution $(\eta, u) \in C^0(\mathbb{R}_0^+; H^4 \times H^3)$ satisfying that,
		for any $t >0$,
		$\eta(t)\in H^4_*$ and
		\begin{align}
			\mathcal{E}(t) + \int_0^t \mathcal{D}(\tau) \mm{d} \tau \lesssim  \mathcal{E}^0 .\label{0.11}
		\end{align}
	\end{thm}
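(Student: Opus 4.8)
The plan is to establish Theorem~\ref{1.12} by the standard continuation argument: local well-posedness plus a closed a priori estimate of the form \eqref{0.11} on a maximal existence interval, which then forces the interval to be all of $\mathbb{R}_0^+$. The local existence of a unique solution $(\eta,u)\in C^0([0,T];H^4\times H^3)$ with $\eta(t)\in H^4_*$ for small $T$ follows from a linearization/fixed-point scheme applied to \eqref{1.21} (the condition $\det(\nabla\phi)\geqslant 1/2$ in the definition of $H^4_*$ guarantees $\mathcal{A}$ and $J^{-1}$ are well-defined and smooth in $\eta$, so $\mathcal{N}$ in \eqref{1.4} is a genuine higher-order perturbation). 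The heart of the matter is therefore the global-in-time a priori bound. I would introduce the small quantity $\delta(t):=\sup_{\tau\in[0,t]}\big(\|u(\tau)\|_2^2+\kappa\|\nabla\eta(\tau)\|_2^2\big)$ together with $\mathcal{E}(t)$ as in the statement, assume the bootstrap hypothesis $\mathcal{E}(t)\leqslant 2c_2\mathcal{E}^0$ (so that in particular $\delta(t)$ is small and $\eta(t)\in H^4_*$ is preserved), and aim to improve it to $\mathcal{E}(t)\leqslant c_2\mathcal{E}^0$ under \eqref{1.22}.

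The energy estimates proceed in three layers matching the three terms in $\mathcal{E}(t)$ and $\mathcal{D}(t)$. \emph{Zeroth layer (no temporal weight):} apply $\partial^\alpha$ for $|\alpha|\leqslant 2$ to $\eqref{1.21}_2$, pair with $\partial^\alpha u$, integrate by parts, and use $\eqref{1.21}_1$ to convert the elastic term into $-\tfrac{\kappa}{2}\tfrac{d}{dt}\|\nabla\eta\|_2^2$; the viscous terms yield $\mu\|\nabla u\|_2^2+\lambda\|\mm{div}\,u\|_2^2\gtrsim\|\nabla u\|_2^2$ (using $3\lambda+2\mu>0$ after the pressure term is handled — here the asymptotic stability condition \eqref{abjlj0i} is crucial, since it removes $\varrho$ as an independent unknown and turns the pressure into a controllable function of $J^{-1}-1$, whose leading part cancels against $-P'(1)\nabla\,\mm{div}\,\eta$ in \eqref{1.21}). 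This gives $\tfrac{d}{dt}(\|u\|_2^2+\kappa\|\nabla\eta\|_2^2)+c\|\nabla u\|_2^2\lesssim (\mathcal{N}\mid \cdot)$-terms, which are cubic (or higher) in the solution and absorbed using smallness of $\delta$ and the factor $\kappa^{-1}$ hidden in \eqref{1.22}. To recover dissipation of $\kappa\|\Delta\eta\|_2^2$ in $\mathcal{D}(t)$ I would test $\eqref{1.21}_2$ against $-\Delta\eta$ (equivalently, use $\eta_t=u$ to get a cross term $\tfrac{d}{dt}(u\mid -\Delta\eta)$ whose "good" part is $\kappa\|\Delta\eta\|_2^2$ and whose error is $\|\nabla u\|_2^2$ plus nonlinear terms), then form a suitable linear combination with the basic energy so that the combined functional is equivalent to $\|u\|_2^2+\kappa\|\nabla\eta\|_2^2$ and its derivative controls $c(\|\nabla u\|_2^2+\kappa\|\Delta\eta\|_2^2)$. \emph{First and second layers (weights $(t+1)$ and $(t+1)^2$):} repeat the same testing at the level of $\nabla$ and $\nabla^2$, multiply the resulting differential inequalities by $(t+1)$ and $(t+1)^2$ respectively, and use that $\tfrac{d}{dt}[(t+1)^j(\cdots)]=(t+1)^j\tfrac{d}{dt}(\cdots)+j(t+1)^{j-1}(\cdots)$, so that the extra term $j(t+1)^{j-1}(\cdots)$ is exactly absorbed by the dissipation produced at the previous (lower-weight) layer. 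Summing the three layers yields $\tfrac{d}{dt}\mathcal{E}(t)+c\,\mathcal{D}(t)\lesssim \text{(nonlinear remainders)}$.

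The main obstacle is controlling the nonlinear remainders coming from $\mathcal{N}$ in \eqref{1.4} against $\mathcal{D}(t)$ with constants uniform in the large parameter $\kappa$; this is precisely where the structural condition \eqref{1.22} enters. Every term in $\mathcal{N}$ is at least quadratic and contains a factor $\tilde{\mathcal{A}}$, $J-1$, or $J^{-1}-1$, each of which is $O(\|\nabla\eta\|)$ near the rest state; after Hölder and the Sobolev embeddings $H^2\hookrightarrow L^\infty$ one bounds the remainder by $C\big(\|\nabla\eta\|_{H^2}+\|\nabla u\|_{H^1}\big)\mathcal{D}(t)$ up to lower-order weighted pieces, and then uses $\|\nabla\eta\|_{H^2}^2\leqslant\kappa^{-1}\mathcal{E}(t)\leqslant 2c_2\kappa^{-1}\mathcal{E}^0$ together with the bootstrap bound to make the prefactor smaller than $c/2$ — this forces the appearance of $\sqrt{c_2\mathcal{E}^0}$ and $(c_2\mathcal{E}^0)^2$ divided by $\kappa$ in \eqref{1.22} (the fourth-power term arises from the highest-weight $(t+1)^2$ layer, where one pays two extra factors of the solution size). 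Once $\tfrac{d}{dt}\mathcal{E}(t)+\tfrac{c}{2}\mathcal{D}(t)\leqslant 0$ is achieved, integrating in time gives $\mathcal{E}(t)+\tfrac{c}{2}\int_0^t\mathcal{D}(\tau)\,d\tau\leqslant \mathcal{E}^0$, which after tracking constants yields \eqref{0.11} and improves the bootstrap to $\mathcal{E}(t)\leqslant c_2\mathcal{E}^0$, closing the argument; a routine continuity check in $t$ then upgrades the a priori bound to a genuine global solution and completes the proof.
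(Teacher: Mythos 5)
Your proposal is correct and follows essentially the same route as the paper: local well-posedness plus an a priori energy estimate built from unweighted, $(t+1)$-weighted and $(t+1)^2$-weighted layers, using the smallness mechanism $\|\nabla\eta\|_{H^2}^2\leqslant\kappa^{-1}\mathcal{E}(t)$ to close a continuity argument. One minor correction: the quartic term $(2c_2\mathcal{E}^0)^2$ in \eqref{1.22} does not come from paying extra solution factors at the highest $(t+1)^2$ weight layer; it comes from the cubic factor $1+\|\Delta\eta\|_2^3$ in the bound for $\|\mathcal{N}^u\|_2$ (Lemma~\ref{4.19}), which originates in the cofactor and Jacobian expansions of $\tilde{\mathcal{A}}$ and $J$ and, combined with the leading $\kappa^{-1/2}\mathcal{E}^{1/2}$ part, produces the $\kappa^{-2}\mathcal{E}^2(t)$ contribution that forces $\kappa^{-1}K^4$ to be small in \eqref{2.5}.
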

	
	We briefly mention the proof of Theorem \ref{1.12} to highlight its key steps. It is well-known that the linearized problem of \eqref{1.21} enjoys the following basic energy identity
	\begin{align}
		{\|u\|_0^2 }  +{\kappa}\|\nabla
		\eta\|_0^2 + {P'(1) }
		\|\mm{div} \eta\|_0^2  + {2} \int_0^t
		\big(\lambda\|\div u\|_0^2 +\mu\|\nabla
		u\|_0^2\big) \mm{d} \tau = {I_0} ,
		\label{3.1}
	\end{align}
	where we have defined that $I_0:=\|u^0\|_0^2 +\kappa \|\nabla
	\eta^0\|_0^2 +P'(1) \|\mm{div}
	\eta^0\|_0^2$.  In particular, we find  that $\|\nabla \eta\|_0\to 0$ as $\kappa\to \infty$ for fixed $I^0$. Based on this key observation, Jiang--Jiang used a standard energy method (without temporal weights) to establish
	the existence result of unique strong solutions of in a spatial periodic domain (denoted by $\mathbb{T}^3$)
	whenever the elasticity coefficient is  larger than initial velocity perturbation of the rest state. Follow the spirit of Jiang--Jiang's proof, we further exploit an energy method with temporal weights to establish Theorem \ref{1.12}. The advantage of our method provides the temporal decay rates of solutions.
	These decay rates will be useful to deal with the nonlinear terms in the process of the spectral analysis for further improving the known decay rate of spatial derivatives of solutions in \eqref{0.11}.
	
	We mention that the existence result in Lagrangian coordinates from Theorem \ref{1.12} can be recovered
	to the one in Eulerian coordinates. In fact, noting that the solution $\eta$ in Theorem \ref{1.12} satisfies
	\begin{align}
		\zeta:= \eta(y, t)+y   : \mathbb{R}^3 \to \mathbb{R}^3 \mbox{ is a }C^2\mbox{ diffeomorphism mapping and }\det(\nabla\zeta) \geqslant 1/2, \label{2312018031adsadfa21601xx}
	\end{align}
	and then using an inverse transform of the Lagrangian coordinates, i.e., $$(\rho,v,F):=( J^{-1},u,\nabla \zeta)|_{y=\zeta^{-1}},$$ we can easily
	get a global solution of \eqref{1.1} from Theorem \ref{1.12}. More precisely, we have the following conclusion,  the proof of which
	is referred to \cite[Theorem 1.2]{JFJSJMFMOSERT}.
	\begin{cor}\label{201904301948xx}
		Let  $ (\rho^0, v^0,F^0)\in H^3  $  and $\kappa$ satisfy the following conditions:
		\begin{enumerate}[(1)]
			\item  $F^{0}=\nabla\zeta^{0}(\zeta_{0}^{-1}(x))$, where $\eta^{0}:=\zeta^{0}(y)-y\in H^4_{*}$,
			\item \eqref{1.22} holds with $v^0(\zeta^0)$ in place of $u^0$.
		\end{enumerate}
		Then, the Cauchy problem of \eqref{1.1} with initial data $(\rho,v,F)|_{t=0}=(\rho^0,v^0,F^0)$ admits a unique global classical solution $(\rho,v,F)\in C^0(\mathbb{R}_0^+,H^3)$.
	\end{cor}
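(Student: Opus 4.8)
The plan is to transfer the problem to Lagrangian coordinates, invoke Theorem~\ref{1.12}, and then invert the coordinate change; the only genuine work is to check that Sobolev regularity survives the (time-dependent) change of variables in both directions.

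First I would set $\zeta^0:=\eta^0+y$, $u^0:=v^0(\zeta^0)$ and $\varrho^0:=\rho^0(\zeta^0)$. Since $\eta^0\in H^4_*$, the map $\zeta^0$ is a $C^1$-diffeomorphism of $\mathbb{R}^3$ with $\det(\nabla\zeta^0)\geqslant 1/2$ and $\nabla\zeta^0-I\in H^3$; hence the standard composition (Moser-type) estimates give $u^0\in H^3$. By hypothesis (1) we have $F^0(\zeta^0(y))=\nabla\zeta^0(y)$, so in Lagrangian variables $\tilde F^0=\nabla\zeta^0$ and $J_0=\det\nabla\zeta^0=\det F^0(\zeta^0)$; together with the compatibility $\rho^0\det F^0\equiv1$ that the recovery formula $\rho=J^{-1}|_{y=\zeta^{-1}}$ forces on the Eulerian data (this being exactly \eqref{abjlj0i} read in Eulerian coordinates), one obtains $\varrho^0=J_0^{-1}$. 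Thus the transformed data are admissible for the reduced problem \eqref{1.21}, hypothesis (2) is precisely the largeness condition \eqref{1.22} for $(\eta^0,u^0)$, and Theorem~\ref{1.12} produces a unique global classical solution $(\eta,u)\in C^0(\mathbb{R}_0^+;H^4\times H^3)$ with $\eta(t)\in H^4_*$ for all $t>0$. By \eqref{2312018031adsadfa21601xx}, $\zeta(\cdot,t):=\eta(\cdot,t)+y$ is then a $C^2$-diffeomorphism of $\mathbb{R}^3$ with $\det(\nabla\zeta(\cdot,t))\geqslant1/2$ for every $t\geqslant0$.

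Next I would define $(\rho,v,F):=(J^{-1},u,\nabla\zeta)|_{y=\zeta^{-1}(x,t)}$ with $J:=\det\nabla\zeta$, and verify two points. For regularity: since $\zeta(\cdot,t)$ is a $C^2$-diffeomorphism whose Jacobian is bounded below, its inverse $\zeta^{-1}(\cdot,t)$ inherits the same regularity, $\nabla\zeta^{-1}-I$ stays in $H^3$ locally uniformly in $t$, and a further round of composition estimates gives $(\rho-1,v,F-I)\in C^0(\mathbb{R}_0^+;H^3)$. For the equations: I would reverse the chain-rule identities \eqref{1.7}--\eqref{1.20}. The relation $F(x,t)=\nabla\zeta(\zeta^{-1}(x,t),t)$ together with $\zeta_t=u$ yields $\eqref{1.1}_3$ automatically (the content of the sentence following \eqref{1.7}); the conservation law \eqref{1.20} with $\varrho^0J_0\equiv1$ gives $\varrho=J^{-1}$, hence $\rho=J^{-1}|_{y=\zeta^{-1}}$, which turns the Lagrangian continuity equation $\eqref{1.17}_2$ back into $\eqref{1.1}_1$; and the momentum equation $\eqref{1.17n}_2$ becomes $\eqref{1.1}_2$ after using Piola's identity $\partial_k(J\mathcal{A}_{ik})=0$ and \eqref{1.9} in the reverse direction and composing with $\zeta^{-1}$.

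For uniqueness I would argue conversely: given two Eulerian solutions with the same initial data, solve the flow-map ODE \eqref{1.3} with the respective velocities to obtain flow maps $\zeta_i$, set $\eta_i:=\zeta_i-y$ and $u_i:=v_i(\zeta_i)$; both $(\eta_i,u_i)$ solve \eqref{1.21} with the same data $(\eta^0,u^0)$, so the uniqueness part of Theorem~\ref{1.12} forces $(\eta_1,u_1)=(\eta_2,u_2)$, and inverting the transform gives equality of the Eulerian solutions. The hard part will be the bookkeeping of Sobolev regularity under composition with the time-dependent flow map and its inverse — verifying that $\nabla\zeta^{-1}-I$ remains in $H^3$ on each finite time interval and that the lower Jacobian bound $\det(\nabla\zeta)\geqslant1/2$ (which is exactly what $\eta\in H^4_*$ encodes) persists for all time, so that all these compositions are globally legitimate. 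This is precisely the computation carried out in \cite[Theorem~1.2]{JFJSJMFMOSERT}.
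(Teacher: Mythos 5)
The paper offers no proof here, deferring entirely to the cited reference; your proposal is the standard transfer-and-invert argument that reference carries out, and it is correct. Your observation that the hypotheses tacitly require the compatibility $\rho^0\det F^0\equiv 1$ (the Eulerian reading of \eqref{abjlj0i}) is worth making explicit, since otherwise the recovered density $\rho=J^{-1}|_{y=\zeta^{-1}}$ would not agree with the prescribed $\rho^0$ at $t=0$ and the reduction from \eqref{1.17} to \eqref{1.17n} — hence Theorem \ref{1.12} — would not apply.
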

	
	As mentioned above, the solution $(\eta,u)$ to \eqref{1.12} can be approximately by
	the solution of the corresponding linear problem of \eqref{1.21} for  sufficiently large $\kappa$ and $\mathcal{E}^0$ being uniformly bounded with respect to $\kappa$. Such property is also called the   vanishing phenomena of the nonlinear interactions. More precisely, we have the following conclusion for the
	asymptotic behaviour of global solutions in Theorem \ref{1.12} with respect to $\kappa$.
	\begin{thm}
		\label{1.10}
		Let $(\eta, u)$ be the solution given by Theorem \ref{1.12}.
		Then the following linear problem
		\begin{equation}
			\label{1.13}
			\begin{cases}
				\eta^l_t =u^l,\\
				u^l_t -\nabla( P'(1)
				\mm{div}\eta^l +\lambda\mm{div} u^l)-\Delta(\mu u^l
				+\kappa\eta^l)= 0,\\
				(\eta^l, u^l)|_{t =0} = (\eta, u)|_{t=0}
			\end{cases}
		\end{equation}
		admits a unique solution   $(\eta^l,u^l)\in C^0(\mathbb{R}_0^+; H^4 \times H^3)$. Moreover,  for $(\eta^d, u^d) =(\eta -\eta^l, u -u^l)$ and any given $t>0$, it holds
		\begin{align}
			\bar{\mathcal{E}}(t) + \int_0^t \bar{\mathcal{D}}(\tau) \mm{d} \tau \lesssim \kappa^{-\frac{1}{2}}
			\big(\mathcal{E}_0^\frac{3}{2} +
			\mathcal{E}_0^3 \big),
			\label{1.14}
		\end{align}
		where $\bar{\mathcal{E}}(t)$ and $\bar{\mathcal{D}}(t)$ are defined by $ {\mathcal{E}}(t)$ and $ {\mathcal{D}}(t)$ with $(\eta^d, u^d)$ in place of  $(\eta, u)$, respectively.
	\end{thm}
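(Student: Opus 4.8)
The plan is to view $(\eta^d,u^d):=(\eta-\eta^l,u-u^l)$ as the solution of the \emph{same} linear system as \eqref{1.13}, but now driven by the nonlinear residual $\mathcal{N}$ of \eqref{1.4} and starting from zero data. First I would dispose of the linear part: the constant-coefficient problem \eqref{1.13} is solved in $C^0(\mathbb{R}_0^+;H^4\times H^3)$ by a routine Galerkin scheme whose a~priori bounds are the identity \eqref{3.1} together with its spatially differentiated versions up to order $4$ in $\eta^l$ and order $3$ in $u^l$ (equivalently, by diagonalising \eqref{1.13} via the Fourier transform); since $(\eta^0,u^0)\in(H^4\cap H^4_*)\times H^3$, this furnishes $(\eta^l,u^l)$ with the claimed regularity, and uniqueness follows from linearity and the same energy identity. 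Subtracting \eqref{1.13} from \eqref{1.21} then gives
\begin{equation*}
  \begin{cases}
    \eta^d_t=u^d,\\
    u^d_t-\nabla\big(P'(1)\div\eta^d+\lambda\div u^d\big)-\Delta\big(\mu u^d+\kappa\eta^d\big)=\mathcal{N},\\
    (\eta^d,u^d)|_{t=0}=(0,0),
  \end{cases}
\end{equation*}
where $\mathcal{N}=\mathcal{N}(\eta,u)$ is evaluated on the solution supplied by Theorem \ref{1.12}.

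The second step is to rerun, essentially word for word, the temporal-weighted energy argument that produces \eqref{0.11}, now applied to $(\eta^d,u^d)$. Because the data vanish, every boundary contribution that appeared as $\mathcal{E}^0$ in that derivation disappears, and one is left with an estimate of the schematic shape
\begin{equation*}
  \bar{\mathcal{E}}(t)+\int_0^t\bar{\mathcal{D}}(\tau)\,\mm{d}\tau
  \;\lesssim\; \varepsilon\int_0^t\bar{\mathcal{D}}(\tau)\,\mm{d}\tau
  \;+\;\int_0^t\big|\mathcal{R}(\tau)\big|\,\mm{d}\tau ,
\end{equation*}
in which $\mathcal{R}$ collects all pairings of $\mathcal{N}$ — and, where one integrates by parts in time, of $\partial_t$-type modifications of $\mathcal{N}$ — against $u^d$ and its spatial derivatives, equipped with the weights $1$, $(1+\tau)$, $(1+\tau)^2$ dictated by the three tiers of $\bar{\mathcal{D}}$. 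Choosing $\varepsilon$ small enough absorbs the first term on the right.

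The heart of the proof, and the step I expect to be the main obstacle, is to bound $\int_0^t|\mathcal{R}|\,\mm{d}\tau$ by $\kappa^{-\frac12}\big(\mathcal{E}_0^{3/2}+\mathcal{E}_0^{3}\big)$. Every summand of $\mathcal{N}$ in \eqref{1.4} is at least quadratic and carries a factor of $\tilde{\mathcal{A}}=\mathcal{A}-I$ or of $J-1$, both smooth functions of $\nabla\eta$ vanishing at $\nabla\eta=0$; hence, once $\|\nabla\eta\|_2$ is small as guaranteed by \eqref{1.22}, $\|\tilde{\mathcal{A}}\|_3+\|J-1\|_3\lesssim\|\nabla\eta\|_3$. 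Meanwhile \eqref{0.11} delivers the elastic decay rates
\begin{equation*}
  \sqrt{\kappa}\,\|\nabla\eta\|_2+\sqrt{\kappa}\,(1+t)^{1/2}\|\Delta\eta\|_2+\sqrt{\kappa}\,(1+t)\|\nabla\Delta\eta\|_1\lesssim\mathcal{E}_0^{1/2},
\end{equation*}
while $\int_0^t\mathcal{D}(\tau)\,\mm{d}\tau\lesssim\mathcal{E}_0$ controls the velocity in the weighted dissipation norm. Combining these via Hölder's inequality, the embeddings $H^1\hookrightarrow L^6$ and $H^2\hookrightarrow L^\infty$, the $H^m$ product rules, and Young's inequality to split off the $\varepsilon\bar{\mathcal{D}}$-part, each weighted forcing integral becomes $\tau$-integrable (the weights inside $\mathcal{D}$ are precisely what kills the polynomial factors produced by the weights in $\mathcal{R}$) and is dominated by $\kappa^{-1/2}$ times a product of powers of $\mathcal{E}_0^{1/2}$, $\mathcal{E}_0$ and $\bar{\mathcal{E}}(t)^{1/2}$; invoking Theorem \ref{1.12} once more to replace the residual $\mathcal{E}(t)$ and $\int_0^t\mathcal{D}\,\mm{d}\tau$ by $\mathcal{E}_0$ then yields the degree-$\tfrac32$ contribution from the genuinely quadratic part of $\mathcal{N}$ and the degree-$3$ contribution from its higher-order part (where a further factor of $J-1$ or $\tilde{\mathcal{A}}$ enters). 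The delicate point is purely this accounting — carrying one factor of $\kappa^{-1/2}$ and the correct power of $\mathcal{E}_0$ through each of the many terms of \eqref{1.4} and across all three weight tiers; structurally nothing new is needed beyond the machinery already used for Theorem \ref{1.12}.
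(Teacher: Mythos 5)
Your plan matches the paper's: view $(\eta^d,u^d)=(\eta-\eta^l,u-u^l)$ as solving the same linear operator driven by $\mathcal{N}(\eta,u)$ with zero data, rerun the weighted energy identity of Lemma~\ref{3.12} (so that $\bar{\mathcal{E}}^0=0$), and then dominate the forcing integral using the bounds on $\|\mathcal{N}^u\|_2$ and $\|\mathcal{N}_P\|_2$ already established in \eqref{4.20} and \eqref{4.59}, together with the stability estimates \eqref{0.11} and \eqref{1.8}. The one place where your description and the paper diverge is how the forcing is closed. You phrase it as splitting off an $\varepsilon\int\bar{\mathcal{D}}$-piece by Young and absorbing; but the lowest-tier pairing $(\mathcal{N}\,|\,u^d)_{H^2}$ produces $\|u^d\|_2^2$, which is \emph{not} controlled by $\bar{\mathcal{D}}$ (only $\|\nabla u^d\|_2^2$ and higher appear there), so a naive $\varepsilon$-absorption into $\bar{\mathcal{D}}$ fails at that tier. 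The paper sidesteps this entirely: it never absorbs anything for the error equation, but instead estimates each test-function factor by the triangle inequality $\|D^\beta(\eta^d,u^d)\|\leqslant\|D^\beta(\eta,u)\|+\|D^\beta(\eta^l,u^l)\|$ and plugs in the already-known weighted bounds for $(\eta,u)$ and $(\eta^l,u^l)$ (see \eqref{3.15}); the time decay $(\tau+1)^{-1}\mathcal{D}^{1/2}$ built into $\|\mathcal{N}\|_2$ then renders the integral finite and yields $\kappa^{-1/2}(\mathcal{E}_0^{3/2}+\mathcal{E}_0^3)$ directly, with no fixed-point/absorption step for $\bar{\mathcal{E}}$ or $\bar{\mathcal{D}}$. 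Since you also mention $\bar{\mathcal{E}}(t)^{1/2}$ appearing and then being replaced via Theorem~\ref{1.12}, you likely have the same mechanism in mind, but you should make explicit that the $u^d$-tier is handled by the triangle inequality rather than by absorption into $\bar{\mathcal{D}}$; with that clarification your argument coincides with the paper's.
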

	
	Theorem \ref{1.10} can be easily obtained by following the proof of Theorem \ref{1.12}, we will briefly sketch the proof in Section \ref{202240110261721}. Finally, exploiting the spectral analysis
	for solutions obtained in Theorem \ref{1.12} and the careful energy estimates for the nonlinear terms in $\mathcal{N}$,  we further improve the  temporal decay rates of solutions as follows:
	\begin{thm}\label{1.24}
		Let  $(\eta, u)$ be the solution given by Theorem \ref{1.12} and $(\eta^0, u^0)$ satisfy \eqref{1.22}.  Additionally suppose that the norm $\|(\eta^0, u^0)\|_{L^1}$ is bounded. For sufficiently small $c_1$ in  \eqref{1.22}, it holds
		\begin{align}
			& \|\nabla^k (\eta(t), u(t)) \|_0  \lesssim \left(\|(\eta^0, u^0)\|_{L^1} +
			{\sqrt{\mathcal{E}^0}}\right)(1 +t)^{-\frac{3}{4} - \frac{k}{2}}, \label{1.26}
		\end{align}
		where $k =0$, $1$ and $2$.
	\end{thm}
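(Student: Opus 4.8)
The plan is to combine the algebraic-in-time energy estimate \eqref{0.11} from Theorem \ref{1.12} with a spectral (Fourier-based) analysis of the linear problem \eqref{1.13}, using Duhamel's principle to control the nonlinear source $\mathcal{N}$. First I would write the solution of \eqref{1.21} as
\[
(\eta,u)(t)=\mathbb{G}(t)(\eta^0,u^0)+\int_0^t \mathbb{G}(t-\tau)(0,\mathcal{N}(\tau))\,\mm{d}\tau,
\]
where $\mathbb{G}(t)$ is the solution semigroup associated with \eqref{1.13}. The key preliminary step is the spectral analysis of $\mathbb{G}(t)$: taking the Fourier transform in $y$, the linear system becomes a $6\times6$ ODE system with symbol depending on $|\xi|$, and by diagonalizing (separating the divergence-free and curl-free parts via the Hodge/Helmholtz decomposition, exactly as in \cite{MR3101093}) one obtains that the low-frequency part of $\mathbb{G}(t)$ decays like the heat kernel while the high-frequency part decays exponentially. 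This yields the standard $L^1$--$L^2$ type estimate
\[
\|\nabla^k \mathbb{G}(t)(f,g)\|_0 \lesssim (1+t)^{-\frac34-\frac k2}\|(f,g)\|_{L^1}+e^{-c t}\|(f,g)\|_{H^{k}},
\]
and, crucially for the elasticity-improved rate hinted at in \eqref{1.27}, an additional factor $(1+t)^{-1/2}$ for the $(\rho-1,F-I)$ components, reflecting that the sound/shear-wave interaction damps the density and deformation more strongly than the velocity. For the present Theorem \ref{1.24}, though, only the $(1+t)^{-3/4-k/2}$ rate for $(\eta,u)$ is claimed, so the plain estimate suffices.

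Next I would estimate the Duhamel term. The point is that $\mathcal{N}$ defined in \eqref{1.4} is genuinely quadratic (and higher) in $(\nabla\eta,\nabla u)$ and their derivatives — every term carries at least one factor of $\tilde{\mathcal A}=\mathcal A-I$ or $J-1$, both of which are $O(\nabla\eta)$, times a derivative of $u$ or $\eta$. Therefore, using the algebraic decay already guaranteed by \eqref{0.11} (which gives, e.g., $\|\nabla u\|_2\lesssim\sqrt{\mathcal E^0}(1+t)^{-1/2}$ and $\|\nabla^2 u\|_1\lesssim\sqrt{\mathcal E^0}(1+t)^{-1}$, and similarly for $\kappa^{1/2}\Delta\eta$, $\kappa^{1/2}\nabla\Delta\eta$), together with Sobolev embeddings $H^2\hookrightarrow L^\infty$ and Hölder/interpolation, one shows
\[
\|\mathcal{N}(\tau)\|_{L^1}\lesssim \mathcal E^0\,(1+\tau)^{-1}, \qquad
\|\mathcal{N}(\tau)\|_{0}\lesssim \mathcal E^0\,(1+\tau)^{-\frac54},
\]
and more generally $\|\nabla^{k-1}\mathcal N(\tau)\|_0$ decays fast enough. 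Feeding these into the convolution
\[
\Big\|\nabla^k\!\!\int_0^t \mathbb{G}(t-\tau)(0,\mathcal N(\tau))\,\mm{d}\tau\Big\|_0
\lesssim \int_0^{t/2}(1+t-\tau)^{-\frac34-\frac k2}\|\mathcal N(\tau)\|_{L^1}\,\mm{d}\tau
+\int_{t/2}^t\!\big[(1+t-\tau)^{-\frac34}+e^{-c(t-\tau)}\big]\|\nabla^{k}\mathcal N(\tau)\|_{0\ \text{or}\ L^1}\,\mm{d}\tau,
\]
and using $\int_0^{t/2}(1+t-\tau)^{-3/4-k/2}(1+\tau)^{-1}\,\mm{d}\tau\lesssim(1+t)^{-3/4-k/2}\ln(1+t)$ on the low-frequency piece — where the logarithm is absorbed because $\mathcal N$ is actually quadratic so the exponent $-1$ can be improved to $-1-\varepsilon$ by spending a tiny bit of the available $(1+\tau)^{-3/2}$ decay coming from the product of two decaying factors — one closes the bound at $(1+t)^{-3/4-k/2}$. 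The smallness of $c_1$ in \eqref{1.22} is used to guarantee the a priori estimates needed to treat $\tilde{\mathcal A}$, $J-1$ and $P(J^{-1})-P(1)$ as lower-order and to close the bootstrap.

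The main obstacle I anticipate is the borderline integrability in the Duhamel integral for the top-order derivative $k=2$: the source $\mathcal N$ contains terms like $\nabla^2 u\cdot\nabla\eta$ whose $L^1$-in-$y$ norm decays only like $(1+\tau)^{-1}$ if one uses the crude energy bounds, which is exactly the critical rate that produces a logarithm after convolution with $(1+t-\tau)^{-3/4-1}$. Overcoming this requires the careful observation that $\mathcal N$ has \emph{quadratic} structure, so each summand is a product of two quantities each of which decays, giving a combined rate strictly better than $(1+\tau)^{-1}$ (typically $(1+\tau)^{-3/2+\epsilon}$ after distributing derivatives and using interpolation between $L^2$ and $L^\infty$ bounds from $\mathcal E$); alternatively one runs the standard time-weighted bootstrap, assuming $\|\nabla^2(\eta,u)(t)\|_0\le M(1+t)^{-7/4}$ and recovering it with a smaller constant. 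Once that interplay between the quadratic structure of $\mathcal N$ and the energy decay rates is pinned down, the remaining estimates are routine convolution bounds.
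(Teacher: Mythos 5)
Your overall strategy — Hodge decomposition to diagonalize the symbol, an $L^1$--$L^2$ decay estimate for the semigroup, Duhamel's formula, and a bootstrap on the nonlinear term — is the same skeleton as the paper's (Section 4). However, there is a genuine gap in how you propose to close the Duhamel integral, and the alternative you offer would not in fact succeed.

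The estimate $\|\mathcal{N}(\tau)\|_{L^1}\lesssim \mathcal{E}^0(1+\tau)^{-1}$, obtained by invoking \eqref{0.11} on both factors of the quadratic terms, is indeed the critical rate that produces a logarithm, as you notice. But your suggested workaround (a) — ``distributing derivatives and using interpolation between $L^2$ and $L^\infty$ bounds from $\mathcal{E}$'' to reach $(1+\tau)^{-3/2+\varepsilon}$ without a bootstrap — cannot work: \eqref{0.11} gives no temporal decay at all for $\|\nabla\eta\|_0$ (only $\kappa\|\nabla\eta\|_2^2\lesssim \mathcal{E}^0$, uniformly in $t$), and therefore a term such as $\nabla\eta\cdot\nabla^2 u$ measured in $L^1$ cannot be pushed below $(1+\tau)^{-1}$ using the energy alone. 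The decay of $\|\nabla^k\eta\|_0$ for $k=0,1,2$ is precisely what one is trying to prove, so it must be fed in through a bootstrap quantity. Your workaround (b) is the right idea, but it must bootstrap all of $k=0,1,2$ (and a few auxiliary norms), not merely the top-order one.

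The paper executes this by defining a supremum-in-time quantity $\mathcal{S}(t)$ that records the target weighted norms $(1+\tau)^{3/4+k/2}\|\nabla^k(\eta,u)\|_0$ together with some additional weighted norms taken from the energy $\mathcal{E}$ and the time-integrated dissipation. The nonlinear estimates (Lemmas \ref{4.13}--\ref{4.21}) are then written so that each quadratic product is split into (one factor bounded by $\sqrt{\mathcal{E}^0}$ via \eqref{0.11}) $\times$ (one factor bounded by $\mathcal{S}(t)$ via the bootstrap), giving rates of the form $\kappa^{-1/2}(1+\tau)^{-7/4}\sqrt{\mathcal{E}^0}\,\mathcal{S}(t)$, and (for $\|\nabla^2\mathcal{N}^u\|_0$) an extra piece $(1+\tau)^{-3/4}\|\nabla\Delta u\|_1$ which is controlled via the $L^2$-in-time dissipation term embedded in $\mathcal{S}$. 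Feeding these into Duhamel and the convolution lemma $\int_0^t(1+t-\tau)^{-r_1}(1+\tau)^{-r_2}\,\mm{d}\tau\lesssim (1+t)^{-\min(r_1,r_2)}$ yields $\mathcal{S}(T)\leqslant c_4\big(\|(\eta^0,u^0)\|_{L^1}+\sqrt{\mathcal{E}^0}\big)+c_4\,\kappa^{-1/2}\sqrt{\mathcal{E}^0}\,\mathcal{S}(T)$, a linear inequality in $\mathcal{S}(T)$, and the coefficient $\kappa^{-1/2}\sqrt{\mathcal{E}^0}\lesssim\sqrt{c_1}$ is small by \eqref{1.22}. This is also exactly where the ``sufficiently small $c_1$'' hypothesis is used — not, as you suggest, to treat $\tilde{\mathcal{A}}$ and $J-1$ as lower order (that smallness is already guaranteed by Theorem \ref{1.12}), but to absorb $\mathcal{S}(T)$ from the right-hand side. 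Until the bootstrap quantity and the $\mathcal{E}$/$\mathcal{S}$ pairing in the nonlinear bounds are spelled out, the argument does not close.
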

	
	Similarly to Corollary \ref{201904301948xx}, we can easily deduce the corresponding version in Eulerian coordinates from  the above theorem.
	\begin{cor}
		Let $(\rho,v,F)$ be the solution provided by Corollary \ref{201904301948xx} with sufficiently small $c_1$, then $(\rho,v,F)$ enjoys the following estimates
		\begin{align}
			& \|\nabla^k (\rho(t) - 1, F(t)-I)\|_0 \lesssim (\|(\eta^0, u^0)\|_{L^1} +
			\sqrt{\mathcal{E}^0}) (1 +t)^{-\frac{3}{4} -\frac{k+1}{2}}, \label{1.27x}\\
			& \|\nabla^l v(t)\|_0 \lesssim (\|(\eta^0, u^0)\|_{L^1} +
			\sqrt{\mathcal{E}^0}) (1 +t)^{-\frac{3}{4} -\frac{l}{2}}, \label{1.28}
		\end{align}
		where $k=0$, $1$ and $l=0$, $1$, $2$.
	\end{cor}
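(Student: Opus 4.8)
The plan is to deduce the Eulerian estimates \eqref{1.27x}--\eqref{1.28} directly from the Lagrangian decay estimates \eqref{1.26} of Theorem \ref{1.24}, exploiting the explicit algebraic relations between $(\rho,v,F)$ and $(\eta,u)$ established in Section \ref{introud}. Recall that under the asymptotic stability condition \eqref{abjlj0i} we have $\rho = J^{-1}$ (with $\bar\rho = 1$), $F = \nabla\zeta|_{y=\zeta^{-1}} = (I+\nabla\eta)|_{y=\zeta^{-1}}$, and $v = u|_{y=\zeta^{-1}}$, where $\zeta = y + \eta$. Thus in Lagrangian variables the perturbations are $F-I = \nabla\eta$ and $\rho - 1 = J^{-1} - 1$, and the key point is that both are, to leading order, linear in $\nabla\eta$: indeed $J^{-1} - 1 = -\,\mm{div}\,\eta + R(\nabla\eta)$ where $R$ is a smooth function of $\nabla\eta$ vanishing to second order at $0$ (this expansion is already implicit in the nonlinear term \eqref{1.4}). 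Consequently, for the \emph{Lagrangian} quantities one immediately gets, for $k = 0,1,2$,
\begin{align}
\|\nabla^k(J^{-1}-1)\|_0 + \|\nabla^k \nabla\eta\|_0 &\lesssim \|\nabla^{k+1}\eta\|_0 + (\text{higher-order terms}) \nonumber\\
&\lesssim (\|(\eta^0,u^0)\|_{L^1} + \sqrt{\mathcal{E}^0})(1+t)^{-\frac34 - \frac{k+1}{2}}, \nonumber
\end{align}
the gain of one half-power coming precisely from the extra spatial derivative $\nabla^{k+1}\eta$ appearing, versus the rate $(1+t)^{-3/4-k/2}$ that \eqref{1.26} assigns to $\nabla^k\eta$ itself. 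The higher-order terms are handled by the algebra of $H^m$ (with $m\le 3$) together with the uniform smallness of $\|\nabla\eta\|_{H^2}$ guaranteed by \eqref{0.11} and the condition \eqref{1.22}, so that products like $\nabla\eta\cdot\nabla^{k+1}\eta$ decay at least as fast as the leading term.

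Next I would pass from Lagrangian to Eulerian norms. Since $\zeta(\cdot,t)$ is a $C^2$-diffeomorphism with $\det\nabla\zeta \ge 1/2$ and $\|\nabla\eta\|_{H^2}$ uniformly small, the change of variables $y \mapsto x = \zeta(y,t)$ has Jacobian bounded above and below uniformly in $t$, and its derivatives up to the relevant order are controlled by $\|\eta(t)\|_{H^3}$, which is uniformly bounded by \eqref{0.11}. Therefore, for each fixed order of derivative, $\|\nabla^k_x(\rho(t)-1)\|_{L^2_x} \approx \|\nabla^k_y(J^{-1}-1)\|_{L^2_y}$ up to lower-order contributions involving products of $\nabla\zeta^{-1} - I$ with lower derivatives of $J^{-1}-1$ (coming from the chain rule), and similarly for $F - I = \nabla\eta|_{y=\zeta^{-1}}$ and for $v = u|_{y=\zeta^{-1}}$. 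The chain-rule remainder terms are again absorbed because they carry an extra factor of $\nabla\eta$ (hence are quadratically small) and because $\nabla\zeta^{-1} = \mathcal{A}^\top$ with $\tilde{\mathcal{A}} = \mathcal{A} - I$ small in $H^2$. This yields \eqref{1.27x} for $\rho - 1$ and $F - I$, and \eqref{1.28} for $v$ with the \emph{same} rate $(1+t)^{-3/4-l/2}$ as $\nabla^l u$, $l = 0,1,2$, since $v$ involves no extra derivative.

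I would organize the write-up as: (i) establish the pointwise/Sobolev expansion $J^{-1}-1 = -\mm{div}\,\eta + R(\nabla\eta)$ with the requisite tame estimates on $R$; (ii) derive the improved Lagrangian decay $\|\nabla^k(J^{-1}-1)\|_0 \lesssim (1+t)^{-3/4-(k+1)/2}$ and note $\|\nabla^k(F-I)\|_0 = \|\nabla^{k+1}\eta\|_0$ has the same rate; (iii) invoke the uniform diffeomorphism bounds to convert $L^2_y$ into $L^2_x$ norms, tracking chain-rule remainders; (iv) conclude \eqref{1.27x}--\eqref{1.28}. The main obstacle is step (iii): one must ensure that the composition with $\zeta^{-1}$ and the accompanying Jacobian factors do not degrade the decay rate, i.e., that every remainder term produced by the change of variables is either itself decaying at the claimed (faster) rate or is a product of a uniformly-small-in-space factor with a term already known to decay at the required rate. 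This is precisely where the uniform-in-time smallness of $\|\nabla\eta(t)\|_{H^2}$ — a consequence of \eqref{1.22} and \eqref{0.11} — is essential, and where one mimics the coordinate-transfer arguments of \cite{JFJSJMFMOSERT,MR4152207}.
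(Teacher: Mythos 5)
Your proposal is correct and follows essentially the same route the paper intends: the paper leaves this corollary as an immediate consequence of Theorem \ref{1.24} via the inverse Lagrangian transform $(\rho,v,F)=(J^{-1},u,\nabla\zeta)|_{y=\zeta^{-1}}$, and your argument --- the derivative gain from $F-I=\nabla\eta$ and $J^{-1}-1=-\mm{div}\,\eta+O(|\nabla\eta|^2)$ (which is why $k$ is restricted to $0,1$), together with the uniform bounds $1/2\leqslant J\lesssim 1$, $\|\mathcal{A}\|_{L^\infty}\lesssim 1$ and the smallness/decay of $\nabla\eta$ from \eqref{0.11} to absorb the chain-rule and quadratic remainders --- is exactly that computation carried out in detail.
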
 
	\begin{rem} Similarly the temporal decay of $\|\nabla^k (\rho - 1, F-I)\|_0 $ with $k=2$, $3$ and $\|\nabla^3 v\|_0$ can be easily computed out by using \eqref{0.11}.
			However, interesting readers can further refer to \cite{MR4641382} for the faster temporal decay rates of the highest-order spacial derivatives of solutions of the system \eqref{1.1}.
			In addition, following the similar arguments introduced by Ishigaki in \cite{MR4152207}, we can further derive that  
			the    solution in Corollary \ref{201904301948xx} with sufficiently small $c_1$ further satisfies
			\begin{align}
				\|(\rho(t) -1, F(t)-I)\|_{L^\infty} \lesssim (\|(\eta^0, u^0)\|_{L^1} +
				\sqrt{\mathcal{E}^0})(1 +t)^{-2} ;\label{1.23}
			\end{align}
			moreover, in view of \eqref{1.27x} with $k =0$ and \eqref{1.23},  it is easy to see  that for $p >2$,
		\begin{align} 
			\|(\rho(t) -1, F(t)-I)\|_{L^p} & \lesssim \|(\rho(t) -1, F(t)-I)\|_0^\frac{2}{p} \|(\rho(t) -1, F(t)-I)\|_{L^\infty}^{1 - \frac{2}{p}} \nonumber\\
			& \lesssim (\|(\eta^0, u^0)\|_{L^1} +
			\sqrt{\mathcal{E}^0}) (1 +t)^{-\frac{3}{2}\left(1 -\frac{1}{p}\right)-\frac{1}{2}}. 
		\end{align}
	\end{rem}
	
	The rest three sections of this paper are devoted to the proofs of
	Theorems \ref{1.12}--\ref{1.24} respectively.
	
	\section{Proof of Theorem \ref{1.12}}
	
	In this section, we will prove Theorem \ref{1.12} by three steps. First, we   \emph{a priori} derive the stability estimate \eqref{0.11} in Section \ref{2}. Then, we introduce the local(-in-time) solvability of the Cauchy problem \eqref{1.21}  (see Proposition \ref{2.23}) and the theorem of homeomorphism mappings (see Proposition \eqref{2.24}) in Section \ref{3}.
	Finally,  we establish the global(-in-time)  solvability of  \eqref{1.21} stated in Theorem \ref{1.10} by a continuity method in Section \ref{3x}.
	
	\subsection{$\textit{A priori}$ stability  estimates}\label{2}
	
	This section focuses on deriving the \emph{a priori} stability estimate
	\eqref{0.11} for the classical solution $(\eta,u)$ defined on $\mathbb{R}^3\times \overline{I_T}$ to the Cauchy problem
	\eqref{1.21}, where $T$ is a given positive constant.  To begin with, we derive the (generalized) energy inequality for  $(\eta,u)$.
	\begin{lem}
		\label{3.12}
		It holds for any $t \in \overline{I_T}$ that
		\begin{align}
			\mathcal{E}(t) + \int_0^t \mathcal{D}(\tau) \mm{d}\tau & \lesssim \mathcal{E}^0(\eta^0, u^0)
			+\int_0^t \mathcal{I}_1(\tau) \mathrm{d}\tau, \label{3.11}
		\end{align}
		where $\alpha: = \min\left\{ {\mu
			P'(1)}/{8\lambda}, {\kappa}/{8 },
		{\mu^2}/{32}\right\}$ and
		\begin{align}
			\mathcal{I}_1(t):=& \left( \mathcal{N}  \left|\,   u
			-\frac{\mu}{4}\Delta \eta
			-\frac{\mu}{8}(\tau +1)\Delta u  \right.\right )_{H^2} + \left( \mathcal{N}  \left|\, \alpha (\tau +1)
			\Delta^2 \eta +\frac{\alpha}{2}(\tau +1)^2\Delta^2 u
			\right.\right )_{H^1} .
		\end{align}
		
	\end{lem}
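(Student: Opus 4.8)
The plan is to establish \eqref{3.11} through a single weighted energy identity obtained by testing the momentum equation in \eqref{1.21}, together with its spatial derivatives up to the needed orders, against exactly the multiplier encoded in $\mathcal{I}_1$: namely against $u-\frac{\mu}{4}\Delta\eta-\frac{\mu}{8}(t+1)\Delta u$ in the $H^2$‑pairing and against $\alpha(t+1)\Delta^2\eta+\frac{\alpha}{2}(t+1)^2\Delta^2 u$ in the $H^1$‑pairing, collecting all resulting time derivatives into a generalized energy $\tilde{\mathcal{E}}(t)$ with $\tilde{\mathcal{E}}\approx\mathcal{E}$, absorbing every cross term into the dissipation, and finally integrating in time. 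Throughout I treat $(\eta,u)$ as a sufficiently smooth solution on $\overline{I_T}$, so that all integrations by parts below are legitimate (the bound for the $H^4\times H^3$‑solution then follows by the usual regularization/approximation built into the continuity argument), and I use $\eta_t=u$ repeatedly to turn $\frac{d}{dt}$ of an $\eta$‑quadratic into a $u$--$\eta$ coupling.

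\emph{Two elementary building blocks.} Applying $\partial^\alpha$ with $|\alpha|\le 2$ to the momentum equation, pairing with $\partial^\alpha u$, summing, and integrating by parts gives the basic energy balance
\begin{align*}
\frac12\frac{d}{dt}\big(\|u\|_2^2+\kappa\|\nabla\eta\|_2^2+P'(1)\|\mm{div}\eta\|_2^2\big)+\mu\|\nabla u\|_2^2+\lambda\|\mm{div} u\|_2^2=(\mathcal{N}\,|\,u)_{H^2},
\end{align*}
which is the $H^2$‑version of \eqref{3.1} and produces the $u$‑dissipation and the unweighted part of $\mathcal{E}$. Pairing the same equation instead with $-\Delta\partial^\alpha\eta$ and summing over $|\alpha|\le2$ converts ``hyperbolic'' energy into $\eta$‑dissipation:
\begin{align*}
\frac{d}{dt}\big((\nabla u\,|\,\nabla\eta)_{H^2}+\frac{\mu}{2}\|\Delta\eta\|_2^2+\frac{\lambda}{2}\|\nabla\mm{div}\eta\|_2^2\big)-\|\nabla u\|_2^2+\kappa\|\Delta\eta\|_2^2+P'(1)\|\nabla\mm{div}\eta\|_2^2=-(\mathcal{N}\,|\,\Delta\eta)_{H^2}.
\end{align*}
The sign‑indefinite term $-\|\nabla u\|_2^2$ is harmless once this second identity is given the small weight $\mu/4$, since then it is dominated by $\mu\|\nabla u\|_2^2$ from the first; and the coupling $(\nabla u\,|\,\nabla\eta)_{H^2}$ inside the energy is absorbed by $\frac12\|u\|_2^2+\frac{\kappa}{2}\|\nabla\eta\|_2^2$ (here one uses $\kappa$ large, or just Young's inequality with a free small constant). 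Multiplying by $(t+1)$ and by $(t+1)^2$ before integrating in space, and shifting the higher‑order pairings one level up so that one tests with $-(t+1)\Delta\partial^\alpha u$, $(t+1)\Delta^2\partial^\beta\eta$ and $(t+1)^2\Delta^2\partial^\beta u$, yields the weighted analogues. The only new phenomenon is the product rule $(t+1)^j\frac{d}{dt}Q=\frac{d}{dt}\big((t+1)^j Q\big)-j(t+1)^{j-1}Q$, whose remainders $-j(t+1)^{j-1}Q$ are precisely quantities such as $\|\nabla u\|_2^2$, $\kappa\|\Delta\eta\|_2^2$, $(t+1)\|\Delta u\|_2^2$, $(t+1)\kappa\|\nabla\Delta\eta\|_1^2$ — each subordinate to the dissipation one weight‑level below, which is exactly why $\mathcal{E}$ and $\mathcal{D}$ are built with these telescoping weights.

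\emph{Assembling and closing.} Taking the linear combination of the five pairings with the weights $1,\frac{\mu}{4},\frac{\mu}{8},\alpha,\frac{\alpha}{2}$ dictated by $\mathcal{I}_1$ produces $\frac{d}{dt}\tilde{\mathcal{E}}(t)+c\,\mathcal{D}(t)\le\mathcal{I}_1(t)$, where $\tilde{\mathcal{E}}$ is the quadratic form formed from all the collected time derivatives. The particular choice $\alpha=\min\{\mu P'(1)/8\lambda,\ \kappa/8,\ \mu^2/32\}$ is what makes all absorptions hold simultaneously: the first entry dominates the $\lambda$‑couplings appearing in the $\mm{div}$‑dissipation terms, the second dominates the $\kappa$‑coupling inside the energy, and the third guarantees that the parabolic dissipation $\mu\|\nabla u\|^2$ together with its weighted versions beats every $-\|\nabla u\|^2$‑type remainder generated by differentiating the temporal weights. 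With $\alpha$ fixed this way, one checks that the diagonal part of $\tilde{\mathcal{E}}$ reproduces, up to fixed constants, the six terms defining $\mathcal{E}(t)$ while the off‑diagonal couplings are dominated by half of the diagonal, so $\tilde{\mathcal{E}}(t)\approx\mathcal{E}(t)$ and $\tilde{\mathcal{E}}(0)\lesssim\mathcal{E}^0$; likewise the surviving dissipation is $\gtrsim\mathcal{D}(t)$. Integrating over $(0,t)$ and using $\tilde{\mathcal{E}}\gtrsim\mathcal{E}$ gives \eqref{3.11}.

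\emph{Main obstacle.} The one genuine difficulty is the bookkeeping in the preceding step: one must check that \emph{every} remainder produced by the product rule on a weight $(t+1)^j$ — including the mixed $u$--$\eta$ couplings coming from the cross‑term identities and from the elastic term $\kappa\Delta\eta$ in the cross pairings — is genuinely subordinate to some term already present in $\mathcal{D}(t)$, and that the constants $\frac{\mu}{4},\frac{\mu}{8},\alpha,\frac{\alpha}{2}$ can be fixed once and for all, independently of $T$ and of the large parameter $\kappa$, so that all the absorptions are consistent at the same time. This precise matching between the chosen multipliers and the functionals $\mathcal{E},\mathcal{D}$ is exactly why the statement isolates the single functional $\mathcal{I}_1$, collecting all of $\mathcal{N}$, rather than estimating $\mathcal{N}$ on the spot; bounding $\int_0^t\mathcal{I}_1$ is then deferred to the subsequent lemmas.
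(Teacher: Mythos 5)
Your proposal matches the paper's proof in structure and in all essential details: the same five pairings ($u$, $-\Delta\eta$, $-(t+1)\Delta u$ in $H^2$; $(t+1)\Delta^2\eta$, $(t+1)^2\Delta^2 u$ in $H^1$), the same coefficients $1,\ \mu/4,\ \mu/8,\ \alpha,\ \alpha/2$, the same use of $\eta_t=u$ to convert the $-\Delta\eta$ pairing into a time derivative plus a $-\|\nabla u\|_2^2$ remainder, and the same telescoping absorption of the $(t+1)^j$ product-rule remainders by the dissipation one weight level below. The only slip is in your absorption remark: the cross term $(\nabla u\,|\,\nabla\eta)_{H^2}=-(u\,|\,\Delta\eta)_{H^2}$ inside $\mathscr{E}$ is controlled by Young's inequality against $\|u\|_2^2$ and the $\mu^2\|\Delta\eta\|_2^2$ contribution generated by the $\mu/4$-weighted second identity, not by $\kappa\|\nabla\eta\|_2^2$, and no $\kappa$-largeness is invoked at that step — the fixed coefficients $\mu/4$, $\mu/8$, $\alpha$, $\alpha/2$ are chosen precisely so that the couplings close without it.
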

	\begin{proof}
		Taking the inner product of the equation
		$\eqref{1.21}_2$ and $u$ in $H^2$, and then integrating by
		parts, we have
		\begin{align}
			\frac{1}{2}\frac{\mm{d}}{\mm{d}
				t}\big( \|u\|_2^2 +\kappa\|\nabla
			\eta\|_2^2 +P'(1)   \|\mm{div}
			\eta\|_2^2 \big) +\lambda\|\div u\|_2^2
			+\mu\|\nabla u\|_2^2 = (\mathcal{N} \left|\, u \right.)_{H^2}.
			\label{3.2}
		\end{align}
		Similarly, taking the inner product of $\eqref{1.21}_2$ and $-\Delta \eta$
		in $H^2$ leads to
		\begin{align}
			\label{2.1}
			& \frac{1}{2}\frac{\mm{d}}{\mm{d} t}\big( \mu \|\Delta
			\eta\|_2^2 + \lambda \|\nabla \mm{div} \eta\|_2^2
			\big) +P'(1)  \|\nabla\div \eta\|_2^2
			\nonumber\\
			&  + \kappa \|\Delta \eta\|_2^2 - (u_t \left|\,
			\Delta \eta\right.)_{H^2} = - (\mathcal{N} \left|\,\Delta \eta\right.)_{H^2}.
		\end{align}
		Since
		$$(u_t \left|\,\Delta \eta\right.)_{H^2} =\frac{\mm{d}}{\mm{d} t}(u \left|\,
		\Delta\eta\right.)_{H^2} + \|\nabla u\|_2^2,$$
		it holds from \eqref{2.1} that
		\begin{align}
			& \frac{1}{2}\frac{\mm{d}}{\mm{d} t}\big(\mu \|\Delta
			\eta\|_2^2 + \lambda \|\nabla \mm{div} \eta\|_2^2
			-2   (u\left|\,\Delta
			\eta\right. )_{H^2}\big) +P'(1)  \|\nabla\div
			\eta\|_2^2 \nonumber\\
			&   + \kappa \|\Delta \eta\|_2^2
			- \|\nabla u\|_2^2= -(\mathcal{N} \left|\,\Delta
			\eta\right. )_{H^2}.\label{2.2}
		\end{align}
		The sum of \eqref{3.2} and $
		\eqref{2.2}\times{\mu}/{4  } $ implies that
		\begin{align}
			& \frac{\mm{d}}{\mm{d} t} \mathscr{E}_1(t)
			+\frac{\mu}{4}\left(3\|\nabla u\|_2^2+ {  P'(1) } \|\nabla\div
			\eta\|_2^2+ { \kappa} \|\Delta
			\eta\|_2^2\right) \nonumber\\
			&  +\lambda\|\mm{div}
			u\|_2^2 =\left(\mathcal{N} \left|\, u
			-\frac{\mu}{4 }\Delta \eta \right.\right)_{H^2}, \label{3.7}
		\end{align}
		where we have defined that
		\begin{align}
			\mathscr{E}_1(t) &: =
			\frac{1}{2} \left( \|u\|_2^2 + \kappa \|\nabla
			\eta\|_2^2 + P'(1)
			\|\mm{div} \eta\|_2^2 +\frac{\mu}{4}\left(\mu \|\Delta \eta\|_2^2 \right.\right.\nonumber\\
			&\quad \left.+  \lambda \|\nabla \mm{div}
			\eta\|_2^2 -2 (u \left|\,\Delta \eta\right. )_{H^2}\right)\Big). \nonumber
		\end{align}
		
		Taking the inner product of the equation $\eqref{1.21}_2$ and $-(t +1)\Delta
		u$ in $H^2$, one has
		\begin{align}
			&\frac{\mm{d}}{\mm{d} t} \mathscr{E}_2(t)
			-\frac{1}{2} \left( \|\nabla u\|_2^2 +P'(1)\|\nabla \div \eta\|_2^2 +\kappa \|\Delta \eta\|_2^2 \right)
			\nonumber\\
			& + (t +1)(\lambda\|\nabla \div
			u\|_2^2 +\mu \|\Delta
			u\|_2^2 )= -(t +1) (\mathcal{N}
			\left|\,\Delta u\right. )_{H^2}, \label{3.4}
		\end{align}
		where
		$$
		\begin{aligned}
			\mathscr{E}_2(t): = \frac{ (t +1) }{2}\left(\|\nabla
			u\|_2^2 + {  P'(1)}  \|\nabla
			\mm{div} \eta\|_2^2 + {\kappa}  \|\Delta \eta\|_2^2\right).
		\end{aligned}
		$$
		
		Analogously, taking the inner product of
		$\eqref{1.21}_2$ and $ (t +1)\Delta^2 \eta$ in $H^1$ arises that
		\begin{align}
			& \frac{\mm{d}}{\mm{d} t} \mathscr{E}_3(t)
			+(t +1)(\kappa\|\Delta\nabla \eta \|_1^2
			+P'(1)    \|\nabla^2 \div
			\eta\|_1^2-\|\nabla^2 u\|_1^2 )\nonumber\\
			&-\frac{1}{2} \left( \mu\|\Delta\nabla \eta\|_1^2 + \lambda \|\nabla^2 \div \eta\|_1^2 \right)
			= (t +1)(\mathcal{N} \left|\, \Delta^2
			\eta \right.)_{H^1},
			\label{3.5}
		\end{align}
		where we have defined that
		$$
		\begin{aligned}
			\mathscr{E}_3(t) : =& \frac{1}{2}\left((t +1)(\mu
			\|\Delta\nabla \eta\|_1^2 + {\lambda}  \|\nabla^2 \div
			\eta\|_1^2 -
			(\nabla\Delta\eta\left|\, \nabla u\right.)_{H^1} )- {\|\nabla^2
				\eta\|_1^2 }\right) .
		\end{aligned}
		$$
		
		At the last step, by taking the
		inner product of the equation $\eqref{1.21}_2$ and $(t
		+1)^2\Delta^2 u$ in $H^1$, we get that
		\begin{align}
			&\frac{\mm{d}}{\mm{d} t} \mathscr{E}_4(t) -
			(t +1)(\|\nabla^2 u\|_1^2 +
			\kappa \|\nabla \Delta \eta\|_1^2
			+P'(1)  \|\nabla^2
			\div \eta\|_1^2) \nonumber\\
			& +(t +1)^2 (\mu\|\nabla \Delta
			u\|_1^2 + \lambda \|\nabla^2
			\div u\|_1^2 )= (t +1)^2 (\mathcal{N} \left|\,
			\Delta^2 u\right.)_{H^1}. \label{3.6}
		\end{align}
		where we have defined that
		$$
		\begin{aligned}
			\mathscr{E}_4(t): = \frac{(t
				+1)^2}{2}\left(  \|\nabla^2 u\|_1^2 + {\kappa}  \|\Delta \nabla  \eta \|_1^2 + {
				P'(1) } \|\nabla^2 \div
			\eta\|_1^2\right).
		\end{aligned}
		$$
		Calculating $\eqref{3.7} +\mu  8^{-1} \eqref{3.4} +\alpha  \eqref{3.5} + \alpha 2^{-1} \eqref{3.6}$ with $\alpha = \min\left\{{\mu
			P'(1)}/{ \lambda}, {\kappa},
		{\mu^2}/{4}\right\}/8$, and then canceling the
		dissipative terms with negative sign, we obtain that
		\begin{align}
			\frac{\mm{d}}{\mm{d} t} \mathscr{E}(t)
			+\mathscr{D}(t) & \leqslant \mathcal{I}_1(t),
			\label{3.8}
		\end{align}
		where
		\begin{align}
			\mathscr{E}(t) & :=\mathscr{E}_1(t) +{\mu}/{8 }\mathscr{E}_2(t) +\alpha \mathscr{E}_3(t) +{\alpha}/{2}\mathscr{E}_4(t)\nonumber\\
			& \geqslant \frac{1 }{2}\left(\|u\|_2^2
			+\kappa\|\nabla \eta\|_2^2
			+{P'(1) } \|\mm{div}
			\eta\|_2^2 +\frac{7\mu^2}{32} \|\Delta
			\eta\|_2^2+ \frac{\mu\lambda}{4  } \|\nabla \mm{div}
			\eta\|_2^2\right)\nonumber\\
			&\quad -\frac{\mu}{4} (u \left|\right. \Delta \eta)_{H^2} +\frac{t+1}{2}\bigg(\frac{1}{8} \left(\mu\|\nabla u\|_2^2
			+ {\mu P'(1)}   \|\nabla \mm{div}
			\eta\|_2^2+  {\mu\kappa} \|\Delta
			\eta\|_2^2\right)  \nonumber\\
			&\quad  +{\alpha\mu} \|\Delta\nabla
			\eta \|_1^2 +{\alpha\lambda} \|\nabla^2
			\div  \eta\|_1^2
			-2\alpha  (\nabla\Delta\eta \left|\right. \nabla
			u)_{H^1} \bigg)\nonumber\\
			& \quad +\frac{(t +1)^2}{{4} } \left({\alpha }\|\nabla^2
			u\|_1^2 +{\alpha\kappa}  \|\Delta \nabla
			\eta\|_1^2 +{\alpha P'(1) } \|\nabla^2 \div \eta\|_1^2 \right)
		\end{align}
		and
		\begin{align}
			\mathscr{D}(t):= & \frac{\mu}{2}\|\nabla
			u\|_2^2 +\lambda \|\div u\|_2^2 + \frac{\mu
				P'(1)}{8} \|\nabla \div \eta\|_2^2 +\frac{\mu
				\kappa}{8 } \|\Delta \eta\|_2^2 \nonumber\\
			& +\frac{t +1}{2 } \left(\frac{\mu\lambda}{4 }\|\nabla \div
			u\|_2^2 +\frac{\mu^2}{8 } \|\Delta
			u\|_2^2 +{\alpha\kappa}\|\Delta \nabla
			\eta\|_1^2 + {\alpha P'(1)  } \|\nabla^2 \div \eta \|_1^2\right)  \nonumber\\
			&+\frac{\alpha(t
				+1)^2}{2}\left(\mu\|\nabla \Delta u\|_1^2  + { \lambda} \|\nabla^2 \div u\|_1^2\right) \approx  \mathcal{D}(t).
			\label{2.6}
		\end{align}
		By H\"older's and Young's inequalities, we
		deduce
		$${\mu} \big(u \left|\,\Delta\eta\right. \big)_{H^2} \leqslant
		\frac{
			4\|u\|_2^2 }{3}+\frac{3\mu^2}{16 }\|\Delta
		\eta\|_2^2 $$
		and
		$$\alpha   (\nabla\Delta\eta\left|\, \nabla
		u\right.)_{H^1} \leqslant \frac{\alpha \mu}{4} \|\nabla\Delta\eta\|_1^2 +\frac{\mu}{32} \|\nabla u\|_1^2.$$
		Consequently, we further obtain the following lower bound for
		$\mathscr{E}(t)$.
		\begin{align}
			\mathscr{E}(t) & \geqslant
			\frac{ \|u\|_2^2}{6} +\frac{\kappa}{2}\|\nabla
			\eta\|_2^2 +\frac{P'(1) }{2}
			\|\mm{div} \eta\|_2^2 +\frac{\mu^2}{16  }
			\|\Delta \eta\|_2^2 \nonumber\\
			&\quad + \frac{\mu\lambda}{8  } \|\nabla \mm{div}
			\eta\|_2^2 +\frac{t +1}{2} \left( {\mu}\left( P'(1)  \|\nabla \mm{div}
			\eta\|_2^2
			+ { \|\nabla u\|_2^2}/{2}\right. \right.\nonumber\\
			&\quad \left.\left.+  \kappa \|\Delta
			\eta\|_2^2 +4{\alpha }  \|\Delta\nabla
			\eta\|_1^2\right) + {\alpha\lambda}  \|\nabla^2 \div
			\eta\|_1^2\right)/8\nonumber\\
			& \quad +\frac{\alpha (t +1)^2 }{4}\left(\|\nabla^2
			u\|_1^2 + \kappa  \|\Delta \nabla
			\eta\|_1^2  +   P'(1)  \|\nabla^2 \div \eta\|_1^2\right) \nonumber\\
			& \geqslant \frac{\|u\|_2^2 }{6}
			+\frac{\kappa}{2}\|\nabla \eta\|_2^2
			+\frac{P'(1) }{2} \|\mm{div}
			\eta\|_2^2 +\frac{\mu^2}{16  } \|\Delta
			\eta\|_2^2 \nonumber\\
			&\quad + \frac{\mu\lambda}{8  } \|\nabla \mm{div}
			\eta\|_2^2 +\frac{\mu(t +1) }{16}\left( \frac{\|\nabla u\|_2^2}{2}
			+ {\kappa} \|\Delta
			\eta\|_2^2\right) \nonumber\\
			&\quad +\frac{\alpha (t +1)^2 }{4}\left(\|\nabla^2
			u\|_1^2 + { \kappa}  \|\Delta \nabla
			\eta\|_1^2 \right)\gtrsim \mathcal{E}(t).
			\label{3.9}
		\end{align}
		Besides, it is easy to deduce that
		\begin{align}
			\mathscr{E}^0 \lesssim \|(u^0,\nabla
			\eta^0,\mm{div} \eta^0)\|_{3}^2 +\|\Delta
			\eta^0\|_2^2 \lesssim \mathcal{E}^0.
			\label{3.10}
		\end{align}
		Integrating the inequality \eqref{3.8} over $[0, t]$, and then making use of \eqref{2.6}--\eqref{3.10}, we arrive at the desired estimate \eqref{3.11}.
	\end{proof}
	
	Next we shall deal with the term $\mathcal{I}_1$ on the right hand side of the  energy inequality
	\eqref{3.11}. Obviously, $\mathcal{I}_1$ can be rewritten as follows:
	\begin{align}
		\mathcal{I}_1(t) =  \mathcal{I}_2(t)  + \mathcal{I}_3(t) 
		\label{2.8},
	\end{align}
	where we have defined that
	\begin{align*}
		\mathcal{I}_2(t):=& \left(\mathcal{N}_P\left|\,u -\frac{\mu}{4 }\Delta \eta
		-\frac{\mu}{8 }(t +1)\Delta u\right.\right)_{H^2} + \left(\mathcal{N}_P\left|\, \alpha (t +1) \Delta^2 \eta
		+\frac{\alpha}{2}(t +1)^2\Delta^2 u \right.\right)_{H^1}\nonumber \\
		\mathcal{I}_3(t) :=&\left( \mathcal{N}^u \left|\,u -\frac{\mu}{4 }\Delta \eta
		-\frac{\mu}{8 }(t +1)\Delta u \right.\right)_{H^2} + \left(\mathcal{N}^u   ~\bigg|~ \alpha (t +1) \Delta^2 \eta
		+\frac{\alpha}{2}(t +1)^2\Delta^2 u \right)_{H^1}
		\\
		\mathcal{N}_P := &-(J-1) \nabla_{\mathcal{A}}
		P( J^{-1})-\nabla_{\tilde{\mathcal{A}}}
		P( J^{-1})  \\
		& \quad -\nabla \left(
		{P}'(1) (J^{-1}-1+\mm{div}\eta)
		+\int_{0}^{ (J^{-1}-1)}(
		(J^{-1}-1)-z)\frac{\mm{d}^2}{\mm{d}z^2} P
		(1 +z)\mm{d}z\right)
	\end{align*}
	and
	\begin{align*}
		\mathcal{N}^u  :=& \mu \big(\mm{div}_{\tilde{\ml{A}}}\nabla_{\ml{A}}u
		+ \mm{div} \nabla_{\tilde{\ml{A}}}u \big) +\lambda  (
		\nabla_{\tilde{\ml{A}}}\mm{div}_{\ml{A}}u
		+ \nabla\mm{div}_{\tilde{\ml{A}}}u)\\
		&  + (J-1)(\mu \Delta_{\mathcal{A}}u+\lambda
		\nabla_{\mathcal{A}}\mm{div}_{\mathcal{A}}u).
	\end{align*}
	
	Now we further \emph{a priori} assume that
	\begin{align}
		\label{2.4}
		\underset{0 \leqslant t \leqslant T}{\sup}\mathcal{E}(t) \leqslant K^2
	\end{align}
	and
	\begin{align}
		\label{2.5}
		\kappa^{-1}\max\{K, K^4\} \in (0, \delta^2)
	\end{align} for some $K> 0$  and for  sufficiently small $\delta\in (0,1)$. It should be noted that the smallness condition of $\delta$ depending only upon the parameters $\mu$ and $\lambda$ will be repeatedly used in what follows.
	In view of \eqref{2.4} and
	\eqref{2.5}, it holds that
	\begin{align}
		\underset{0 \leqslant t \leqslant T}{\sup} \|\nabla
		\eta(t)\|_3 \leqslant \delta\in (0,1).
		\label{2.7}
	\end{align}
	
	Before estimating for $I_2$  and $I_3$, we shall derive some  preliminary estimates for $J$, $\tilde{\mathcal{A}}$, ${\mathcal{A}}$ and $\mathcal{N}^u$.
	\begin{lem}
		\label{4.58}
		Under the condition \eqref{2.7},
		there exists a  sufficiently small $\delta_1$ such that for any $\delta\leqslant \delta_1$ we have
		\begin{align}
			&  {2}^{-1} \lesssim J \lesssim 1 \label{4.30} \\
			& \|J-1\|_3 +\|J^{-1} -1\|_3 \lesssim \|\nabla \eta\|_3 \label{4.31}\\
			& \|\nabla(J^{-1} -1 +\div \eta)\|_2 \lesssim \|\nabla^2 \eta\|_2 \|\nabla^3 \eta\|_1  \label{4.33} \\
			& \|\tilde{\mathcal{A}}\|_{L^\infty} \lesssim \|\tilde{\mathcal{A}}\|_2 \lesssim\|\nabla \eta\|_2 \lesssim \|\Delta \eta\|_1, \label{4.52}\\
			& \|\mathcal{A}\|_{L^\infty} \lesssim \|\tilde{\mathcal{A}}\|_{L^\infty}+1\lesssim 1 \label{4.53}.
		\end{align}
	\end{lem}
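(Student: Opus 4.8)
The plan is to establish the five estimates in order, since each one builds on the previous ones. First I would treat \eqref{4.30}: recall that $J = \det\nabla\zeta = \det(\nabla\eta + I)$, and that $\det(\nabla\eta+I)$ is a polynomial in the entries of $\nabla\eta$ with constant term $1$. Writing $J - 1$ as a sum of terms each containing at least one factor of $\nabla\eta$ (the first-order term being $\div\eta$, plus quadratic and cubic terms), the Sobolev embedding $H^2(\mathbb{R}^3)\hookrightarrow L^\infty(\mathbb{R}^3)$ together with \eqref{2.7} gives $\|J-1\|_{L^\infty}\lesssim \|\nabla\eta\|_3 + \|\nabla\eta\|_3^2 + \|\nabla\eta\|_3^3 \lesssim \delta$. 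Hence for $\delta\leqslant\delta_1$ small, $|J-1|\leqslant 1/2$ pointwise, which yields $1/2\leqslant J\leqslant 3/2$, i.e. \eqref{4.30}; in particular $J^{-1}$ is well-defined and bounded.

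For \eqref{4.31}, I would use the product (Moser-type) estimates in $H^3$: since $J-1$ is a polynomial in $\nabla\eta$ with no constant term and $H^3$ is a Banach algebra in dimension three, $\|J-1\|_3 \lesssim \|\nabla\eta\|_3(1 + \|\nabla\eta\|_3 + \|\nabla\eta\|_3^2)\lesssim \|\nabla\eta\|_3$ using \eqref{2.7}. For $J^{-1}-1 = (1-J)/J$, I would combine $\|1-J\|_3\lesssim\|\nabla\eta\|_3$ with the lower bound \eqref{4.30} and the algebra property applied to $1/J = 1/(1+(J-1))$ expanded as a Neumann-type series (or more cleanly, $\|1/J - 1\|_3 \lesssim \|J-1\|_3$ once $\|J-1\|_{L^\infty}\leqslant 1/2$, via $\|fg\|_3\lesssim\|f\|_3\|g\|_3$ and a bootstrap). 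Estimate \eqref{4.33} is the most delicate of the algebraic bounds: the point is that $J^{-1} - 1 + \div\eta$ has \emph{no linear part} in $\nabla\eta$ — the first-order term of $J^{-1}-1$ is exactly $-\div\eta$ — so $J^{-1} - 1 + \div\eta$ is at least quadratic in $\nabla\eta$. Thus $\nabla(J^{-1}-1+\div\eta)$ is a sum of terms each of which is a product of (derivatives of) $\nabla\eta$ with at least two factors of first-or-higher order, and one can arrange the Hölder/Sobolev pairing so that the highest derivative $\nabla^3\eta$ appears once in $\|\cdot\|_1$ and the remaining factor is controlled by $\|\nabla^2\eta\|_2$ (using $H^1\hookrightarrow L^6$, $H^2\hookrightarrow L^\infty$ in $\mathbb{R}^3$). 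This careful bookkeeping of which factor carries the top derivative is where I expect the main obstacle to lie, since one must verify the quadratic structure survives differentiation and that no term forces $\|\nabla^3\eta\|_1$ to appear twice.

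Finally, \eqref{4.52} follows since $\tilde{\mathcal{A}} = \mathcal{A} - I = (\nabla\eta+I)^{-\top} - I$, and $(\nabla\eta + I)^{-\top} - I$ is again a polynomial expression in $\nabla\eta$ (via the cofactor formula $(\nabla\eta+I)^{-1} = \mathrm{adj}(\nabla\eta+I)/J$ combined with \eqref{4.30}) with no constant term, so the algebra property gives $\|\tilde{\mathcal{A}}\|_2\lesssim\|\nabla\eta\|_2\lesssim\|\Delta\eta\|_1$ — the last comparison because on $\mathbb{R}^3$ the full gradient is controlled by the Laplacian up to lower order via $\|\nabla^2 f\|_0\approx\|\Delta f\|_0$ together with interpolation, and $\|\nabla\eta\|_2^2\lesssim\|\Delta\eta\|_1^2$ for $\eta\in H^3$. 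Then \eqref{4.53} is immediate: $\|\mathcal{A}\|_{L^\infty}\leqslant\|\tilde{\mathcal{A}}\|_{L^\infty} + \|I\|_{L^\infty}\lesssim\|\tilde{\mathcal{A}}\|_2 + 1\lesssim\delta + 1\lesssim 1$. Throughout, the parameter $\delta_1$ is chosen small enough (depending only on $\mu$, $\lambda$, and the universal Sobolev constants) that all the bootstraps and the pointwise bound $|J-1|\leqslant 1/2$ close.
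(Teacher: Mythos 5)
Your overall line of argument matches the paper's: expand $J=\det(\nabla\eta+I)$ as a polynomial with constant term $1$, observe that $J^{-1}-1+\div\eta$ has no linear part, and use the cofactor formula together with the bound on $J^{-1}$ to control $\tilde{\mathcal{A}}$. Two remarks are in order.

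For \eqref{4.33}, you correctly locate the key cancellation but flag the bookkeeping (ensuring $\|\nabla^3\eta\|_1$ never appears twice) as a potential obstacle. The paper closes this by writing $J=1+\div\eta+r_\eta$ with $r_\eta:=\bigl((\div\eta)^2-\mathrm{tr}(\nabla\eta)^2\bigr)/2+\det\nabla\eta$, so that $\|r_\eta\|_3\lesssim\|\nabla\eta\|_3^2$, and then exhibiting the explicit identity
\begin{equation*}
J^{-1}-1+\div\eta=(1-J^{-1}-\div\eta)(\div\eta+r_\eta)+\div\eta(\div\eta+r_\eta)-r_\eta,
\end{equation*}
whose right-hand side is manifestly a sum of products each at least quadratic in $\nabla\eta$; applying $\|\nabla(\cdot)\|_2$ and distributing the derivative across the two factors of each product then yields $\|\nabla^2\eta\|_2\,\|\nabla^3\eta\|_1$ without any difficulty. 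Adopting this identity would turn your outline into a complete argument.

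Your justification of the last inequality $\|\nabla\eta\|_2\lesssim\|\Delta\eta\|_1$ in \eqref{4.52} by ``elliptic regularity plus interpolation'' does not hold up as written. Calder\'on--Zygmund gives $\|\nabla^2\eta\|_0\approx\|\Delta\eta\|_0$ and $\|\nabla^3\eta\|_0\approx\|\nabla\Delta\eta\|_0$, but the $\|\nabla\eta\|_{L^2}$ piece of $\|\nabla\eta\|_2$ has no counterpart on the right: on $\mathbb{R}^3$, $\|\nabla\eta\|_{L^2}$ is \emph{not} controlled by any norm of $\Delta\eta$ (take $\widehat\eta$ supported on $|\xi|<\varepsilon$ and let $\varepsilon\to 0$), and interpolation only gives $\|\nabla\eta\|_0\lesssim\|\eta\|_0^{1/2}\|\Delta\eta\|_0^{1/2}$, which reintroduces an uncontrolled low norm of $\eta$. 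The paper likewise asserts this link without proof, so this is a shared wrinkle rather than a new gap in your proposal, but you should not claim it follows from elliptic estimates; the parts you do establish correctly, $\|\tilde{\mathcal{A}}\|_{L^\infty}\lesssim\|\tilde{\mathcal{A}}\|_2\lesssim\|\nabla\eta\|_2$ together with \eqref{4.53}, are what is actually relied upon downstream.
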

	\begin{proof}
		It can be obtained from a direct calculation that
		\begin{align}
			\label{4.54}
			J = 1 +\div \eta + r_\eta,
		\end{align}
		where $r_\eta := ((\div \eta)^2 -\text{tr}(\nabla \eta)^2) /2+ \det \nabla \eta$. Due to the smallness of $\delta$, we get by H\"{o}lder's and Sobolev inequalities that
		\begin{align}
			\label{4.55}
		\|r_\eta\|_3 \lesssim (\|\nabla \eta\|_{L^\infty} +\|\nabla^2 \eta\|_1) (\|\nabla \eta\|_0 +\|\nabla^3 \eta\|_1) \lesssim \|\nabla \eta\|_3^2.
		\end{align}
		From \eqref{4.54} and \eqref{4.55}, we obtain the estimates \eqref{4.30} and \eqref{4.31}.
		
		By the formula \eqref{4.54}, it holds, for sufficiently small $\delta$,
		$$J^{-1} -1 +\div\eta =(1 -J^{-1} -\div \eta)(\div \eta +r_\eta) +\div \eta (\div \eta +r_\eta) -r_\eta.$$
		Applying $\|\nabla \cdot\|_2$ to the above identity, we easily get \eqref{4.33}.
		
		We further derive from \eqref{4.33} that
		\begin{align}
			\label{4.56}
			\|\nabla J^{-1}\|_1 \lesssim \|\nabla \eta\|_2.
		\end{align}
		In addition, it can be easily deduced that
		\begin{align}
			\label{4.57}
			&\|(A_{ij}^*)_{3 \times 3} -I\|_2 \lesssim \|\nabla \eta\|_2^2,\\
			& \tilde{\mathcal{A}}=J^{-1}({(A^*_{ij})_{3\times 3}-I}+(1-J)I),
			\label{2024010262121}
		\end{align}
		where $A_{ij}^*$ is the algebraic complement minor of $(i,j)$-th entry of the matrix $(\partial_j \zeta_i)_{3 \times 3}$.
		The desired estimates \eqref{4.52} and \eqref{4.53} follow from \eqref{4.30} and \eqref{4.31}, \eqref{4.56}--\eqref{2024010262121} for sufficiently small $\delta$.
	\end{proof}
	\begin{lem}
		\label{4.19}
		Under the condition \eqref{2.7} with  $\delta\leqslant \delta_1$, it holds that
		\begin{align}
			\|\mathcal{N}^u \|_2 \lesssim \left( \kappa^{-\frac{1}{2}}\mathcal{E}^\frac{1}{2}(t) + \kappa^{-2}\mathcal{E}^2(t) \right) (t +1)^{-1} \mathcal{D}^\frac{1}{2}(t).
			\label{4.20}
		\end{align}
	\end{lem}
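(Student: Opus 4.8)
The plan is to expand $\mathcal{N}^u$ term by term and estimate each piece in $H^2$ using the product rule for Sobolev norms together with the structural estimates already collected in Lemma~\ref{4.58}. Recall that $\mathcal{N}^u$ splits into three groups: the ``$\tilde{\mathcal{A}}$-linear'' viscous terms $\mu(\mm{div}_{\tilde{\ml{A}}}\nabla_{\ml{A}}u + \mm{div}\nabla_{\tilde{\ml{A}}}u)$ and $\lambda(\nabla_{\tilde{\ml{A}}}\mm{div}_{\ml{A}}u + \nabla\mm{div}_{\tilde{\ml{A}}}u)$, and the ``$(J-1)$-prefactor'' term $(J-1)(\mu\Delta_{\mathcal{A}}u + \lambda\nabla_{\mathcal{A}}\mm{div}_{\mathcal{A}}u)$. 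The common feature is that each summand is (at least) quadratic: it carries one factor that is small in $\|\nabla\eta\|_3$-type norms (namely $\tilde{\mathcal{A}}$ or $J-1$, controlled by \eqref{4.31} and \eqref{4.52}) and one factor that is a second derivative of $u$ or $\eta$. Since $H^2(\mathbb{R}^3)$ is a Banach algebra, $\|fg\|_2 \lesssim \|f\|_2\|g\|_2$, so for the first two groups I would bound, e.g., $\|\mm{div}_{\tilde{\ml{A}}}\nabla_{\ml{A}}u\|_2 \lesssim \|\tilde{\mathcal{A}}\|_2\,\|\mathcal{A}\|_2\,\|\nabla^2 u\|_2 \lesssim \|\nabla\eta\|_2\,\|\nabla^2 u\|_2$, using \eqref{4.52}--\eqref{4.53}; similarly the $(J-1)$-term is controlled by $\|J-1\|_2\|\mathcal{A}\|_2^2\|\nabla^2 u\|_2 \lesssim \|\nabla\eta\|_2\|\nabla^2 u\|_2$.

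Next I would convert these raw product bounds into the weighted form on the right-hand side of \eqref{4.20}. The idea is to split the small factor $\|\nabla\eta\|_2$ and the derivative factor $\|\nabla^2 u\|_2$ (or $\|\nabla^2\eta\|_2$, when it appears via the mixed $\nabla_{\tilde{\ml{A}}}$-terms and the $\nabla(J^{-1}-1+\div\eta)$-type pieces) across the energy $\mathcal{E}$ and dissipation $\mathcal{D}$ functionals, paying attention to the temporal weights. From the definition of $\mathcal{E}(t)$, one has $\|\nabla\eta\|_2 \lesssim \kappa^{-1/2}\mathcal{E}^{1/2}(t)$ and $\|\Delta\eta\|_1 \lesssim \kappa^{-1/2}(t+1)^{-1/2}\mathcal{E}^{1/2}(t)$, while $\|\nabla^2 u\|_1 \lesssim (t+1)^{-1}\mathcal{E}^{1/2}(t)$ and also $\|\nabla^2 u\|_1 \lesssim (t+1)^{-1/2}\mathcal{D}^{1/2}(t)$; the highest-order piece $\|\nabla^3 u\|_1$ sits in $\mathcal{D}$ with weight $(t+1)^2$. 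Distributing these so as to extract exactly one power of $\mathcal{D}^{1/2}(t)$, one factor of $(t+1)^{-1}$, and the remaining powers of $\mathcal{E}$ and $\kappa^{-1}$ yields the claimed bound $(\kappa^{-1/2}\mathcal{E}^{1/2} + \kappa^{-2}\mathcal{E}^2)(t+1)^{-1}\mathcal{D}^{1/2}$. The two different powers of $\mathcal{E}$ (and correspondingly of $\kappa^{-1}$) arise because some summands are exactly quadratic (giving the $\kappa^{-1/2}\mathcal{E}^{1/2}$ contribution after one $\mathcal{D}^{1/2}$ is removed) while others, through the nonlinear dependence of $\mathcal{A}$, $J^{-1}$ and the remainder $r_\eta$ on $\nabla\eta$ (cf. \eqref{4.54}--\eqref{4.57}), are of higher order, and one uses \eqref{2.5} crudely to absorb extra smallness into a single clean expression.

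The main obstacle is bookkeeping rather than any deep inequality: one must track, for each of the (roughly five) summands of $\mathcal{N}^u$, the precise split of derivatives between the ``small'' and the ``large'' factor so that the correct temporal weight $(t+1)^{-1}$ and exactly one $\mathcal{D}^{1/2}$ come out, while never exceeding the regularity available ($\eta\in H^4$, $u\in H^3$, so at most $\nabla^3 u$ and $\nabla^4\eta$ may appear, and these only in $\mathcal{D}$ with the heaviest weight). A subtle point is that the mixed terms like $\mm{div}\nabla_{\tilde{\ml{A}}}u$ put two derivatives on $\tilde{\mathcal{A}}$, hence on $\nabla\eta$, so one needs $\|\nabla^3\eta\|_1$ there; this is still fine since that norm is controlled (with weight) inside $\mathcal{E}$ via $\|\Delta\nabla\eta\|_1$. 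I would therefore organize the proof as: (i) write $\mathcal{N}^u = \sum_{i} \mathcal{N}^u_i$; (ii) for each $i$, use the $H^2$-algebra property and Lemma~\ref{4.58} to get $\|\mathcal{N}^u_i\|_2 \lesssim (\text{small factor in }\|\nabla\eta\|\text{-norms})\times(\text{derivative of }u)$; (iii) insert the $\mathcal{E}$/$\mathcal{D}$ bounds with their weights, using \eqref{2.4}--\eqref{2.5} to collapse the various monomials into $\kappa^{-1/2}\mathcal{E}^{1/2} + \kappa^{-2}\mathcal{E}^2$; and (iv) sum over $i$ to conclude \eqref{4.20}.
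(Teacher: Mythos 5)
Your plan has a genuine gap in step (iii): the naive $H^2$-algebra bound you propose does not yield the temporal weight $(t+1)^{-1}$ that appears in \eqref{4.20}. Concretely, your example bound $\|\mm{div}_{\tilde{\ml{A}}}\nabla_{\ml{A}}u\|_2 \lesssim \|\nabla\eta\|_2\,\|\nabla^2 u\|_2$ leads, after inserting the weighted energy/dissipation controls, to
\begin{align*}
\|\nabla\eta\|_2\,\|\nabla^2 u\|_2
\lesssim \big(\kappa^{-\frac12}\mathcal{E}^{\frac12}\big)\big((t+1)^{-\frac12}\mathcal{D}^{\frac12}\big)
= \kappa^{-\frac12}(t+1)^{-\frac12}\mathcal{E}^{\frac12}\mathcal{D}^{\frac12},
\end{align*}
which is a full factor $(t+1)^{1/2}$ short of the target $\kappa^{-\frac12}(t+1)^{-1}\mathcal{E}^{\frac12}\mathcal{D}^{\frac12}$. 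The problem is structural: $\|\nabla\eta\|_2$ is only controlled by $\kappa^{-1/2}\mathcal{E}^{1/2}$ \emph{without} any temporal weight, because the energy functional $\mathcal{E}(t)$ puts the $(t+1)$-weights only on $\|\Delta\eta\|_2$ and higher. So any estimate in which the $\eta$-factor appears as $\|\nabla\eta\|_2$ rather than as $\|\Delta\eta\|_2$ (or higher) loses half a power of decay. Your discussion does mention $\|\Delta\eta\|_1\lesssim \kappa^{-1/2}(t+1)^{-1/2}\mathcal{E}^{1/2}$ as an available option, but the product rule you actually invoke ($\|fg\|_2\lesssim\|f\|_2\|g\|_2$ with $f=\tilde{\ml A}$) produces $\|\tilde{\ml A}\|_2\lesssim\|\nabla\eta\|_2$, not $\|\Delta\eta\|_2$, so the extra decay is unavailable at the $L^2$-level of the product.

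The paper's proof avoids this by first factoring out the structural unit $\nabla(\partial_i\eta_j\partial_k u_l)$, i.e.\ it shows in \eqref{2.10} that $\|\mathcal{N}^u\|_2\lesssim(1+\|\Delta\eta\|_2^3)\sum_{i,j,k,l}\|\nabla(\partial_i\eta_j\partial_ku_l)\|_2$; the extra $\nabla$ acting on the bilinear expression $\partial_i\eta_j\partial_ku_l$ is precisely what allows both factors to be promoted to second-derivative norms. In \eqref{4.9} and \eqref{4.5} one then exploits the homogeneous Gagliardo--Nirenberg inequalities $\|\nabla u\|_{L^\infty}\lesssim\|\nabla^2u\|_0^{1/2}\|\nabla^3u\|_0^{1/2}$ and $\|\nabla\eta\|_{L^6}\lesssim\|\nabla^2\eta\|_0$, which crucially avoid any undifferentiated $\|\nabla\eta\|_0$ or $\|\nabla u\|_0$ term; this yields $\|\nabla(\partial_i\eta_j\partial_ku_l)\|_2\lesssim\|\Delta\eta\|_2\|\Delta u\|_2$, and both factors carry the weight $(t+1)^{-1/2}$, giving the required $(t+1)^{-1}$ as in \eqref{2.9}. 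The higher-order prefactor $\|\Delta\eta\|_2^3\lesssim\kappa^{-3/2}(t+1)^{-3/2}\mathcal{E}^{3/2}$ then accounts for the $\kappa^{-2}\mathcal{E}^2$ summand. To repair your argument you would need to replace the crude Banach-algebra step by this kind of derivative-balancing interpolation (or equivalently adopt the paper's factorization), making sure that the $\tilde{\mathcal A}$- and $(J-1)$-factors are always measured through norms of $\nabla^2\eta$ and higher, never through $\|\nabla\eta\|_0$.
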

	\begin{proof}
		Exploiting \eqref{4.54}, \eqref{2024010262121}, and Young's and Sobolev's inequalities, we can derive from the definition of $\mathcal{N}^u$ that
		\begin{align}
			\|\mathcal{N}^u \|_2 \lesssim \left( 1 + \|\Delta \eta\|_2^3 \right)
			\sum_{i,j,k,l=1}^3\|\nabla (\partial_i\eta_j\partial_ku_l)\|_2. \label{2.10}
		\end{align}
		With the aid of  H\"{o}lder's and Sobolev inequalities, one has
		\begin{align}
			\|\nabla (\partial_i\eta_j\partial_ku_l)\|_0  & \lesssim \|\nabla^2 \eta\|_0 \|\nabla u\|_{L^\infty} +\|\nabla \eta\|_{L^6} \|\nabla^2 u\|_{L^3} \nonumber\\
			& \lesssim \|\nabla^2 \eta\|_0 \|\nabla^2 u\|_0^\frac{1}{2} \|\nabla^3 u\|_0^\frac{1}{2}   \lesssim \|\Delta \eta\|_0 \|\Delta u\|_1
			\label{4.9}
		\end{align}
		and
		\begin{align}
			\|\nabla^3 (\partial_i\eta_j\partial_ku_l)\|_0 & \lesssim \|\nabla^4 \eta \nabla u\|_0 +\|\nabla^3 \eta \nabla^2 u\|_0 +\|\nabla^2 \eta \nabla^3 u\|_0 +\|\nabla \eta \nabla^4 u\|_0\nonumber\\
			& \lesssim  \|\nabla^4 \eta\|_0\|\nabla u\|_{L^\infty} +\|\nabla^3 \eta\|_{L^6} \|\nabla^2 u\|_{L^3} \nonumber\\
			&\quad  +\|\nabla^2 \eta\|_{L^\infty} \|\nabla^3 u\|_0 +\|\nabla \eta\|_{L^\infty}\|\nabla^4 u\|_0 \nonumber\\
			& \lesssim \|\nabla^4 \eta\|_0 \|\nabla^2 u\|_0^\frac{1}{2} \|\nabla^3 u\|_0^\frac{1}{2} +\|\nabla^2 \eta\|_0^\frac{1}{4} \|\nabla^4 \eta\|_0^\frac{3}{4}\|\nabla^3 u\|_0\nonumber\\
			& \quad +\|\nabla^2 \eta\|_0^\frac{1}{2} \|\nabla^3 \eta\|_0^\frac{1}{2} \|\nabla^4 u\|_0  \lesssim \|\Delta \eta\|_2 \|\Delta u\|_2.
			\label{4.5}
		\end{align}
		It follows from \eqref{4.9} and \eqref{4.5} that
		\begin{align}
			\|\nabla (\partial_i\eta_j\partial_ku_l)\|_2 & \lesssim \kappa^{-\frac{1}{2}} (t +1)^{-1} \big(\sqrt{\kappa} (t +1)^\frac{1}{2} \|\Delta \eta\|_2 \big) \big((t +1)^{\frac{1}{2}} \|\Delta u\|_2\big) \nonumber\\
			& \lesssim \kappa^{-\frac{1}{2}} (t +1)^{-1} \mathcal{E}^\frac{1}{2}(t) \mathcal{D}^\frac{1}{2}(t) .
			\label{2.9}
		\end{align}
		Note that
		$$\|\Delta \eta\|_2^3 \lesssim  \kappa^{-\frac{3}{2}} (t +1)^{-\frac{3}{2}} \mathcal{E}^\frac{3}{2}(t),$$
		we thus obtain the estimate \eqref{4.20} from \eqref{2.10} and \eqref{2.9}.
	\end{proof}
	
	With  Lemmas \ref{4.58} and  \ref{4.19} in hand, we can further estimate for the two terms $I_2$ and $I_3$ as follows.
	\begin{lem}
		\label{2.20}
		Under the assumptions of \eqref{2.4} and \eqref{2.5}, it holds that
		\begin{align}
			&\mathcal{I}_2(t) \lesssim \delta(t +1)^{-1} \mathcal{D}^\frac{1}{2}(t) +\delta \mathcal{D}(t).
			\label{2.19}
		\end{align}
	\end{lem}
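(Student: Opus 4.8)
The plan is to exploit that $\mathcal{N}_P$ is purely pressure-generated and, crucially, that every one of its summands is at least quadratic in $\nabla\eta$. Write $\mathcal{N}_P=\nabla q+\mathcal{N}_P^{\mathrm{p}}$, where $\nabla q$ is the last, gradient term in the definition of $\mathcal{N}_P$ (so that the scalar $q=-P'(1)(J^{-1}-1+\div\eta)-\int_0^{J^{-1}-1}((J^{-1}-1)-z)\tfrac{\mm{d}^2}{\mm{d}z^2}P(1+z)\,\mm{d}z$), and $\mathcal{N}_P^{\mathrm{p}}:=-(J-1)\nabla_{\mathcal{A}}P(J^{-1})-\nabla_{\tilde{\mathcal{A}}}P(J^{-1})$ is the ``product part''. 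Its quadratic nature is read off from Lemma \ref{4.58}: for $\mathcal{N}_P^{\mathrm{p}}$ it follows from $\|J-1\|_3+\|\tilde{\mathcal{A}}\|_2\lesssim\|\nabla\eta\|_3$ together with $\|\nabla P(J^{-1})\|_2\lesssim\|\nabla^2\eta\|_2$ (using $P\in C^2$, \eqref{4.30} and $\|J^{-1}-1\|_3\lesssim\|\nabla\eta\|_3$); for $\nabla q$ it is exactly the content of the quadratic-remainder estimate \eqref{4.33} together with the Taylor formula, which yield $\|\nabla q\|_2\lesssim\|\nabla^2\eta\|_2\|\nabla^3\eta\|_1+\|\nabla\eta\|_3\|\nabla^2\eta\|_2$. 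In every case, then, $\|\mathcal{N}_P\|_2$ is bounded by a product of two ``$\eta$-norms'', at least one of which is a second- or higher-order derivative of $\eta$ controlled by the dissipation, $\|\nabla^2\eta\|_2\lesssim\kappa^{-1/2}\mathcal{D}^{1/2}(t)$, while the other factor is bounded either by $\|\nabla\eta\|_3\le\delta$ (via \eqref{2.7}) or, more favorably, by the top-order weighted component of the energy, $\|\nabla^3\eta\|_1\lesssim(1+t)^{-1}\kappa^{-1/2}\mathcal{E}^{1/2}(t)$.

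With this in hand I would estimate $\mathcal{I}_2$ one test function at a time. Against the unweighted piece $u-\tfrac{\mu}{4}\Delta\eta$ in $H^2$, a direct Cauchy--Schwarz with the $H^2$-bound of $\mathcal{N}_P$ suffices: pairing a factor $\kappa^{-1/2}\mathcal{D}^{1/2}(t)$ from $\mathcal{N}_P$ with either $\|\nabla^3\eta\|_1\lesssim(1+t)^{-1}\kappa^{-1/2}\mathcal{E}^{1/2}(t)$ (which produces the decay $(1+t)^{-1}$) or with $\|\nabla\eta\|_3\le\delta$, and then invoking $\kappa^{-1}\max\{K,K^4\}\le\delta^2$ together with $\sup_{[0,T]}\mathcal{E}\le K^2$, gives a contribution $\lesssim\delta(1+t)^{-1}\mathcal{D}^{1/2}(t)+\delta\mathcal{D}(t)$. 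For the temporally weighted test functions $(1+t)\Delta u$ (in $H^2$) and $(1+t)\Delta^2\eta$, $(1+t)^2\Delta^2u$ (in $H^1$), one first integrates by parts — and here it is essential both that $\mathcal{N}_P$ has the gradient part $\nabla q$ and that $\mathcal{N}_P^{\mathrm{p}}$ is itself essentially of divergence type through the Piola identity $\partial_k(J\mathcal{A}_{ik})=0$ — so as to move derivatives off the test function and onto $\mathcal{N}_P$ in such a way that only $H^4$-norms of $\eta$, $H^3$-norms of $u$, and $\mathcal{D}$-controlled norms ever occur; for instance $(\nabla q\,|\,(1+t)\Delta^2\eta)_{H^1}\lesssim(1+t)\|\nabla^2q\|_1\|\nabla\Delta\eta\|_1\lesssim(1+t)\|\nabla q\|_2\|\nabla^3\eta\|_1$ after one integration by parts, and analogously against $(1+t)^2\Delta^2u$ one lands on $\|\nabla^3u\|_1\lesssim(1+t)^{-1}\mathcal{D}^{1/2}(t)$. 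Feeding in the weighted bounds for $\|\nabla q\|_2$, $\|\nabla^3\eta\|_1$ and $\|\nabla^3u\|_1$ again closes each term as $\lesssim\delta(1+t)^{-1}\mathcal{D}^{1/2}(t)+\delta\mathcal{D}(t)$, and summing over the finitely many terms yields \eqref{2.19}.

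The step I expect to be most delicate is precisely the integration-by-parts bookkeeping for the last group of test functions. The highest-order test functions carry up to five derivatives of $\eta$ (and four of $u$), overshooting the $H^4\times H^3$ regularity furnished by $\mathcal{E}$ and $\mathcal{D}$; since $\mathcal{N}_P$ is only controlled in $H^2$, the derivatives transferred onto it must be placed exactly right, which is possible only because of the gradient form of $\nabla q$ and the Piola structure of $\mathcal{N}_P^{\mathrm{p}}$ (a derivative landing as $\nabla^5\eta$ or $\nabla^4 u$ in a bad slot would be fatal). At the same time, the temporal weights must be distributed so that the worst cross terms come out as $\delta(1+t)^{-1}\mathcal{D}^{1/2}(t)$ — which is square-integrable-after-Cauchy--Schwarz in time, hence harmless when substituted back into \eqref{3.11} — rather than as the non-integrable $\delta\mathcal{D}^{1/2}(t)$; keeping track of where the factor $\mathcal{E}^{1/2}(t)$ is allowed to donate the decay weight $(1+t)^{-1}$ of its highest-order, $(1+t)^2$-weighted component is the bookkeeping that ultimately makes the stated bound hold.
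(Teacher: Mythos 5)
Your proposal follows the paper's route in all essentials: recognize that every summand of $\mathcal{N}_P$ is at least quadratic in $\nabla\eta$ (this is exactly what Lemma~\ref{4.58} packages, and it yields the paper's key bound \eqref{4.59}, $\|\mathcal{N}_P\|_2 \lesssim \kappa^{-1}(t+1)^{-1}\mathcal{E}^{1/2}(t)\,\mathcal{D}^{1/2}(t)$), then Cauchy--Schwarz directly against the unweighted test function and integrate by parts before pairing against the temporally weighted ones, exactly as in \eqref{2.16}--\eqref{2.18}. One point of your write-up is a red herring, though: the structural features you flag as essential for the integration by parts — the gradient form of $\nabla q$ and a Piola/divergence structure of $\mathcal{N}_P^{\mathrm{p}}$ — are in fact not used and not needed. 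The paper never places more than two derivatives on $\mathcal{N}_P$: for $(\mathcal{N}_P\,|\,(t+1)\Delta^2\eta)_{H^1}$ a single integration by parts in each slot of the $H^1$ pairing gives at worst $\|\nabla^2\mathcal{N}_P\|_0\,\|\nabla^4\eta\|_0 \lesssim \|\mathcal{N}_P\|_2\,\|\Delta\eta\|_2$, and for $(\mathcal{N}_P\,|\,(t+1)^2\Delta^2 u)_{H^1}$ one integration by parts gives $(t+1)^2\|\nabla\mathcal{N}_P\|_1\,\|\nabla\Delta u\|_1 \lesssim (t+1)^2\|\mathcal{N}_P\|_2\,\|\nabla\Delta u\|_1$. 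So control of $\|\mathcal{N}_P\|_2$ alone closes every term; five derivatives of $\eta$ or four of $u$ never actually appear in the final estimates, and no divergence structure of $\mathcal{N}_P$ is invoked. With that simplification your argument coincides with the paper's proof.
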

	\begin{proof}
		It follows from Lemma \ref{4.58} that 
		\begin{align}
			\label{4.59}
			\|\mathcal{N}_P\|_2 & \lesssim \|J-1\|_2 \|\mathcal{A}\|_{L^{\infty}} \|\nabla P( J^{-1})\|_2 +\|\tilde{\mathcal{A}}\|_2\|\nabla P( J^{-1})\|_2  \nonumber\\
			& \quad +\| \nabla (J^{-1}-1+\mm{div}\eta) \|_2 +\left\| \nabla \int_{0}^{ (J^{-1}-1)}(
			(J^{-1}-1)-z)\frac{\mm{d}^2}{\mm{d}z^2} P (1 +z)\mm{d}z \right\|_2 \nonumber \\
			& \lesssim \kappa^{-1}(t +1)^{-1} \big( \sqrt{\kappa}(t +1)^\frac{1}{2} \|\Delta \eta\|_2 \big) \big( \sqrt{\kappa}(t +1)^\frac{1}{2} \|\nabla \Delta \eta\|_1 \big) \nonumber\\
			& \lesssim \kappa^{-1}(t +1)^{-1} \mathcal{E}^\frac{1}{2}(t) \mathcal{D}^\frac{1}{2}(t).
		\end{align}
		Using Sobolev's inequality, \eqref{2.4}, \eqref{2.5} and \eqref{4.59}, we get
		\begin{align}
			& \left(\mathcal{N}_P\left|\, u -\frac{\mu}{4 }\Delta \eta -\frac{\mu}{8 }(t +1)\Delta u\right. \right)_{H^2} \nonumber
			\\
			& \lesssim \|\mathcal{N}_P\|_2 (\|u\|_2 +\|\Delta \eta\|_2 +(t +1)\|\Delta u\|_2) \nonumber\\
			& \lesssim \left(\mathcal{E}^\frac{1}{2}(t)\left(1 + \kappa^{-\frac{1}{2}}(t +1)^{-\frac{1}{2}} \right)+(t +1)^\frac{1}{2} \mathcal{D}^\frac{1}{2}(t) \right)  \kappa^{-1}(t +1)^{-1} \mathcal{E}^\frac{1}{2}(t) \mathcal{D}^\frac{1}{2}(t)\nonumber\\
			&\lesssim \kappa^{-1}\left( (t +1)^{-1} \mathcal{E}(t)\mathcal{D}(t) \right)^\frac{1}{2}\left(
			\left((t +1)^{-1} \mathcal{E}(t) \right)^\frac{1}{2} + \mathcal{D}^\frac{1}{2}(t) \right)\nonumber\\
			& \lesssim \delta (t +1)^{-1}\mathcal{D}^\frac{1}{2}(t) +\delta \mathcal{D}(t).
			\label{2.16}
		\end{align}
		Similarly,
		\begin{align}
			&\left| \left(\mathcal{N}_P\left|\, \alpha (t +1) \Delta^2 \eta \right.\right)_{H^1}
			\right| \lesssim \|\mathcal{N}_P\|_2 ((t +1) \|\Delta \eta\|_2) \lesssim \kappa^{-\frac{3}{2}} \mathcal{E}^\frac{1}{2}(t) \mathcal{D}(t) \lesssim \delta \mathcal{D}(t).\label{2.17}
		\end{align}
		Making use of   \eqref{2.4}, \eqref{2.5}, \eqref{4.59} and the integration by parts  arises that
		\begin{align}
			&\left(\mathcal{N}_P\left|\, -\frac{\alpha}{2}(t +1)^2\Delta^2 u \right.\right)_{H^1}\nonumber
			\\
			&= -\sum\limits_{j =1}\limits^{3} \left(\left(\partial_j \mathcal{N}_P\left|\,\frac{\alpha}{2}(t +1)^2 \partial_j \Delta u \right. \right)_{L^2} +\left(\partial_j \nabla \mathcal{N}_P\left|\,\frac{\alpha}{2}(t +1)^2 \partial_j\nabla \Delta u \right. \right)_{L^2} \right). \nonumber \\
			& \lesssim  (t +1)^2\|\nabla \mathcal{N}_P\|_1\|\nabla \Delta u\|_1 \lesssim \kappa^{-1} \mathcal{E}^\frac{1}{2}(t) \mathcal{D}(t) \lesssim \delta \mathcal{D}(t).\label{2.18}
		\end{align}
		The estimate \eqref{2.19} can be derived from the sum of \eqref{2.16}--\eqref{2.18}.
	\end{proof}
	
	\begin{lem}
		\label{2.11}
		Under the assumptions of \eqref{2.4} and \eqref{2.5},  it holds that
		\begin{align}
			\mathcal{I}_3(t) \lesssim \delta (t +1)^{-1} \mathcal{D}^\frac{1}{2}(t)  + \delta \mathcal{D}(t).
			\label{2.15}
		\end{align}
	\end{lem}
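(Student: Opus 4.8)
The plan is to mirror the structure of the proof of Lemma \ref{2.20}, replacing the estimates on $\mathcal{N}_P$ by the estimate \eqref{4.20} on $\mathcal{N}^u$ established in Lemma \ref{4.19}. Recall that
\begin{align*}
	\mathcal{I}_3(t) = \left( \mathcal{N}^u \left|\,u -\tfrac{\mu}{4 }\Delta \eta
	-\tfrac{\mu}{8 }(t +1)\Delta u \right.\right)_{H^2} + \left(\mathcal{N}^u ~\bigg|~ \alpha (t +1) \Delta^2 \eta
	+\tfrac{\alpha}{2}(t +1)^2\Delta^2 u \right)_{H^1},
\end{align*}
so by Cauchy--Schwarz in $H^2$ and $H^1$ (absorbing the $H^1$-pairings into $H^2$-norms via integration by parts where a $\Delta^2$ appears, exactly as in \eqref{2.18}) the whole quantity is controlled by
\begin{align*}
	|\mathcal{I}_3(t)| \lesssim \|\mathcal{N}^u\|_2 \big( \|u\|_2 + \|\Delta \eta\|_2 + (t+1)\|\Delta u\|_2 + (t+1)\|\Delta\eta\|_2 + (t+1)^2 \|\nabla\Delta u\|_1 \big).
\end{align*}
Each factor inside the parenthesis is bounded in terms of $\mathcal{E}(t)$ and $\mathcal{D}(t)$ by inspection of their definitions: $\|u\|_2 \lesssim \mathcal{E}^{1/2}(t)$, $\|\Delta\eta\|_2 \lesssim \kappa^{-1/2}(t+1)^{-1/2}\mathcal{E}^{1/2}(t)$, $(t+1)\|\Delta u\|_2 \lesssim (t+1)^{1/2}\mathcal{D}^{1/2}(t)$, $(t+1)\|\Delta\eta\|_2 \lesssim \kappa^{-1/2}(t+1)^{1/2}\mathcal{E}^{1/2}(t)$, and $(t+1)^2\|\nabla\Delta u\|_1 \lesssim (t+1)\mathcal{D}^{1/2}(t)$. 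So the bracketed sum is $\lesssim \mathcal{E}^{1/2}(t) + (t+1)^{1/2}\mathcal{D}^{1/2}(t)$ (using $\kappa^{-1}\le\delta^2\le 1$ and $(t+1)\ge 1$ to drop the subordinate terms).

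Combining this with \eqref{4.20}, namely $\|\mathcal{N}^u\|_2 \lesssim (\kappa^{-1/2}\mathcal{E}^{1/2}(t) + \kappa^{-2}\mathcal{E}^2(t))(t+1)^{-1}\mathcal{D}^{1/2}(t)$, and using the a priori bounds \eqref{2.4}--\eqref{2.5} to write $\kappa^{-1/2}\mathcal{E}^{1/2}(t) + \kappa^{-2}\mathcal{E}^2(t) \lesssim \kappa^{-1/2}K + \kappa^{-2}K^4 = \kappa^{-1/2}(K + \kappa^{-3/2}K^4) \lesssim \kappa^{-1/2}\max\{K,K^4\} \lesssim \delta$, we obtain
\begin{align*}
	|\mathcal{I}_3(t)| \lesssim \delta (t+1)^{-1}\mathcal{D}^{1/2}(t)\big( \mathcal{E}^{1/2}(t) + (t+1)^{1/2}\mathcal{D}^{1/2}(t) \big) \lesssim \delta(t+1)^{-1}\mathcal{D}^{1/2}(t) + \delta\mathcal{D}(t),
\end{align*}
where in the last step I used $\mathcal{E}^{1/2}(t)(t+1)^{-1} \lesssim 1$ (since $\mathcal{E}^{1/2}(t)\le K$ and, more importantly, $(t+1)^{-1}\le 1$; in fact one can keep the decay but it is not needed here) and $(t+1)^{-1}(t+1)^{1/2} \le 1$. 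This is precisely \eqref{2.15}.

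The only genuinely delicate point is the treatment of the $H^1$-pairings containing $\Delta^2\eta$ and $\Delta^2 u$: one cannot estimate $\|\Delta^2\eta\|_1$ or $\|\Delta^2 u\|_1$ directly since these norms are not controlled by $\mathcal{E}$ or $\mathcal{D}$ at the top order. As in \eqref{2.18}, the remedy is to integrate by parts to move one derivative off $\Delta^2 u$ (resp. $\Delta^2\eta$) onto $\mathcal{N}^u$, trading $(\,\mathcal{N}^u \mid (t+1)^2\Delta^2 u\,)_{H^1}$ for $\lesssim (t+1)^2 \|\nabla\mathcal{N}^u\|_1 \|\nabla\Delta u\|_1$, and then invoking the $H^2$-bound on $\mathcal{N}^u$ (which controls $\|\nabla\mathcal{N}^u\|_1$). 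For the $\eta$-pairing one integrates by parts the other way, writing $(\,\mathcal{N}^u \mid (t+1)\Delta^2\eta\,)_{H^1} \lesssim (t+1)\|\mathcal{N}^u\|_2\|\Delta\eta\|_2$, which is already of the required form. I expect the bookkeeping of which temporal weight pairs with which norm — ensuring every product closes as $\delta\mathcal{D}(t)$ or $\delta(t+1)^{-1}\mathcal{D}^{1/2}(t)$ rather than producing an uncontrolled $\mathcal{E}(t)$ factor — to be the main place where care is required, but it follows the template already set in Lemmas \ref{4.19} and \ref{2.20}.
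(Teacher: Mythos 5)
Your overall strategy — Cauchy--Schwarz on each pairing, integration by parts on the $\Delta^2$ terms, then Lemma \ref{4.19} for $\|\mathcal{N}^u\|_2$ — is exactly what the paper does, so the plan is sound. However, the step ``$\mathcal{E}^{1/2}(t)(t+1)^{-1} \lesssim 1$'' is a genuine gap. In this paper $\lesssim$ carries a constant that depends only on $P(\cdot)$, $\mu$, $\lambda$; the bound $\mathcal{E}^{1/2}(t) \leqslant K$ from \eqref{2.4} does \emph{not} give $\mathcal{E}^{1/2}(t)\lesssim 1$ in that sense, since $K=\sqrt{2c_2\mathcal{E}^0}$ depends on the initial data and may be arbitrarily large (only $\kappa^{-1}\max\{K,K^4\}$ is small, by \eqref{2.5}). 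Once you collapse $\|\mathcal{N}^u\|_2$ to $\delta(t+1)^{-1}\mathcal{D}^{1/2}(t)$ \emph{before} multiplying by the bracket, the contribution of the $\|u\|_2\lesssim\mathcal{E}^{1/2}(t)$ term becomes $\delta K (t+1)^{-1}\mathcal{D}^{1/2}(t)$, which cannot be absorbed into \eqref{2.15} with a universal constant. The paper's proof of \eqref{2.12} avoids this by \emph{not} simplifying first: it keeps $\|\mathcal{N}^u\|_2 \lesssim \big(\kappa^{-1/2}\mathcal{E}^{1/2}(t)+\kappa^{-2}\mathcal{E}^2(t)\big)(t+1)^{-1}\mathcal{D}^{1/2}(t)$, multiplies by $\mathcal{E}^{1/2}(t)$ to get $\big(\kappa^{-1/2}\mathcal{E}(t)+\kappa^{-2}\mathcal{E}^{5/2}(t)\big)(t+1)^{-1}\mathcal{D}^{1/2}(t)$, and only then invokes \eqref{2.4}--\eqref{2.5}, which give $\kappa^{-1/2}K^2+\kappa^{-2}K^5\lesssim\delta$. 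The extra power of $\mathcal{E}$ is absorbed by the extra negative power of $\kappa$; that is the whole point of the large-$\kappa$ hypothesis, and it is lost if you apply the a priori bounds too early.

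A second, related problem is your bound $(t+1)\|\Delta\eta\|_2 \lesssim \kappa^{-1/2}(t+1)^{1/2}\mathcal{E}^{1/2}(t)$ for the $(t+1)\Delta^2\eta$ pairing. Carrying this through (even with the corrected $\kappa$-bookkeeping) leaves a residue of order $\delta(t+1)^{-1/2}\mathcal{D}^{1/2}(t)$, which is \emph{not} controlled by $\delta(t+1)^{-1}\mathcal{D}^{1/2}(t)+\delta\mathcal{D}(t)$, and whose time integral does not converge. The paper's \eqref{2.13} instead uses the dissipation control $\|\Delta\eta\|_2\lesssim\kappa^{-1/2}\mathcal{D}^{1/2}(t)$, so that $(t+1)\|\Delta\eta\|_2\lesssim(t+1)\kappa^{-1/2}\mathcal{D}^{1/2}(t)$ exactly cancels the $(t+1)^{-1}$ in $\|\mathcal{N}^u\|_2$ and yields the clean $\delta\mathcal{D}(t)$. (Your statement that the bracketed sum is $\lesssim\mathcal{E}^{1/2}(t)+(t+1)^{1/2}\mathcal{D}^{1/2}(t)$ is also off by a power: the last term contributes $(t+1)\mathcal{D}^{1/2}(t)$, not $(t+1)^{1/2}\mathcal{D}^{1/2}(t)$, though that particular slip happens to be harmless.)
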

	\begin{proof}
		Exploiting \eqref{2.4}, \eqref{2.5}, \eqref{4.20}, Cauchy--Schwarz's inequality and the integration by part, one has
		\begin{align}
			&\left(\mathcal{N}^u\left|\, u -\frac{\mu}{4 }\Delta \eta -\frac{\mu}{8 }(t +1)\Delta u\right. \right)_{H^2} \nonumber \\
			& \lesssim \|\mathcal{N}^u \|_2 \big(\|u\|_2 +\|\Delta \eta\|_2 +(t +1)\|\Delta u\|_2\big) \nonumber\\
			& \lesssim \big( \kappa^{-\frac{1}{2}}\mathcal{E}^\frac{1}{2}(t) + \kappa^{-2}\mathcal{E}^2(t) \big) (t +1)^{-1} \mathcal{D}^\frac{1}{2}(t) \big( \mathcal{E}^\frac{1}{2}(t)(1 + \nonumber \\
			& \quad \kappa^{-\frac{1}{2}}(t +1)^{-\frac{1}{2}}) +(t +1)^\frac{1}{2} \mathcal{D}^\frac{1}{2}(t) \big) \nonumber\\
			& \lesssim \big(\kappa^{-\frac{1}{2}} \mathcal{E}(t) + \kappa^{-2} \mathcal{E}^\frac{5}{2}(t) \big)(t +1)^{-1}\mathcal{D}^\frac{1}{2}(t)\nonumber\\
			&\quad +\big( \kappa^{-\frac{1}{2}}\mathcal{E}^\frac{1}{2} (t) +\kappa^{-2}\mathcal{E}^2(t) \big)\mathcal{D}(t) \lesssim \delta (t +1)^{-1} \mathcal{D}^\frac{1}{2}(t) + \delta \mathcal{D}(t). \label{2.12}
		\end{align}
		Similarly to  \eqref{2.12}, we obtain
		\begin{align}
			&\left(\mathcal{N}^u \left|\, \alpha (t +1) \Delta^2 \eta \right.\right)_{H^1}
			\nonumber \\
			&=\left(\mathcal{N}^u \left|\, \alpha (t +1)\Delta^2 \eta \right.\right)_0 -\left(\Delta \mathcal{N}^u \left|\, \alpha (t +1)\Delta^2 \eta \right.\right)_{L^2} \nonumber \\  &\lesssim \|\mathcal{N}^u \|_2 \|(t +1) \Delta \eta\|_2 \lesssim \left( \kappa^{-1}\mathcal{E}^\frac{1}{2}(t) + \kappa^{-\frac{5}{2}}\mathcal{E}^2(t) \right) \mathcal{D}(t)  \lesssim \delta \mathcal{D}(t),
			\label{2.13}
		\end{align}
		and
		\begin{align}
			&\left( \mathcal{N}^u \left|\right. {\alpha}(t +1)^2 \Delta^2 u /2\right)_{H^1}\nonumber \\
			&= -\sum\limits_{j =1}\limits^{3} \left(\left(\partial_j \mathcal{N}^u \left|\,\frac{\alpha}{2}(t +1)^2 \partial_j \Delta u\right.\right)_{L^2} +\left(\partial_j \nabla \mathcal{N}^u \left|\, \frac{\alpha}{2}(t +1)^2 \partial_j\nabla \Delta u\right.\right)_{L^2}\right) \nonumber\\
			& \lesssim (t +1)^2 \|\mathcal{N}^u\|_2\|\nabla \Delta u\|_1 \nonumber\\
			&  \lesssim \left( \kappa^{-\frac{1}{2}}\mathcal{E}^\frac{1}{2}(t) + \kappa^{-2}\mathcal{E}^2(t) \right)  \mathcal{D}(t) \lesssim \delta \mathcal{D}(t).
			\label{2.14}
		\end{align}
		Consequently, we can obtain the desired estimate \eqref{2.15} from \eqref{2.12}--\eqref{2.14}.
	\end{proof}
	
	Thanks to Lemmas \ref{2.20} and \ref{2.11} , we can directly obtain from \eqref{2.8} that
	\begin{align}
		\int_0^t \mathcal{I}_1(\tau) \mm{d}\tau \lesssim \int_0^t \delta (\tau +1)^{-1} \mathcal{D}^\frac{1}{2}(\tau) \mm{d}\tau +\delta \int_0^t \mathcal{D}(\tau) \mm{d}\tau  \lesssim \delta +\delta \int_0^t \mathcal{D}(\tau) \mm{d}\tau.
		\label{2.21}
	\end{align}
	Exploiting \eqref{3.11} and \eqref{2.21}, there exist constants $c_2\geqslant 1$ and $\delta_2\in (0,1)$, such that, for any $\delta \leqslant \delta_2$ and $t \in \overline{I_T}$, it holds
	\begin{align}
		\mathcal{E}(t) + \int_0^t \mathcal{D}(\tau) \mm{d}\tau \leqslant c_2 \mathcal{E}^0.
		\label{2.22}
	\end{align}
	In particular, let
	\begin{align}
		K = \sqrt{2 c_2 \mathcal{E}^0},
		\label{2.25}
	\end{align}
	we can derive from \eqref{2.22} that
	\begin{align}
		\underset{0 \leqslant t \leqslant T}{\sup}\mathcal{E}(t) \leqslant K^2 / 2.
		\label{3.13}
	\end{align}
	
	\subsection{Local solvability of unique solutions and homeomorphism mappings}\label{3}

	To further establish the global well-posedness of the Cauchy problem \eqref{1.21}, we shall introduce the following two conclusions.
	\begin{pro}
		\label{2.23}
		Assume the initial data $(\eta^0, u^0)\in H^4 \times H^3$ and $\|u^0\|_3\leqslant B$ for some constant $B >0$. Then, there exists a small enough constant $\delta_3 \in (0, \delta_1/2]$, such that for any given $\|\nabla \eta^0\|_3 \leqslant \delta_3$, the Cauchy problem \eqref{1.21} admits a unique local strong solution $( \eta, u)$ in the sense that $( \eta, u)\in C^0(\overline{I_T}, H^4 )\times\mathfrak{U}_T$ with some positive constant $T$ depending only on $B$, $P(\cdot)$,   $\mu$ and $\lambda$, where
		$$   \mathfrak{U}_T := \{u \in C^0(\overline{I_T};\
		H^3(\mathbb{R}^3)) ~|~ (u,u_t)\in L^2(I_T;\
		H^{3}(\mathbb{R}^3)\times H^4(\mathbb{R}^3)) \}
		. $$Moreover, it holds $\underset{0\leqslant t \leqslant T}{\sup}\|\nabla \eta(t)\|_3 \leqslant 2\delta_3$\footnote{Here the uniqueness means that if there is another solution
			$( \tilde{\eta}, \tilde{u})\in C^0(\overline{I_T};{H}^{4} ) \times \mathfrak{U}_{T}$
			satisfying $0<\inf_{(y,t)\in \overline{\Omega\times I_T}} \det(\nabla \tilde{\eta}+I)$, then
			$(\tilde{\eta},\tilde{u})=(\eta,u)$ by virtue of the smallness condition
			$\sup_{t\in [0,T]}\|\nabla \eta\|_{3}\leqslant 2\delta_3$.}.
	\end{pro}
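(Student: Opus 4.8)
The plan is to prove Proposition \ref{2.23} by a linearization–iteration scheme, in which the parabolic smoothing of the velocity equation makes up for the absence of smoothing in $\eta_t=u$, while the smallness of $\|\nabla\eta^0\|_3$ keeps the spatial operator uniformly elliptic. The first step is to recast the principal part: using the equivalent form \eqref{1.17n} with $\varrho=J^{-1}$, the velocity satisfies
\begin{equation*}
u_t-\mu J\Delta_{\mathcal{A}}u-\lambda J\nabla_{\mathcal{A}}\mm{div}_{\mathcal{A}}u=\kappa\Delta\eta-J\nabla_{\mathcal{A}}P(J^{-1}),\qquad \eta_t=u,
\end{equation*}
so that the highest-order-in-$u$ part of $\mathcal N$ has been absorbed into the principal operator $-\mu J\Delta_{\mathcal{A}}-\lambda J\nabla_{\mathcal{A}}\mm{div}_{\mathcal{A}}$. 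By Lemma \ref{4.58}, whenever $\|\nabla\eta\|_3\leqslant 2\delta_3$ with $2\delta_3\leqslant\delta_1$, the matrix $\mathcal{A}=(\nabla\eta+I)^{-\top}$ and the Jacobian $J=\det(\nabla\eta+I)$ remain uniformly close in $H^3$ to $I$ and to $1$, so this operator is a uniformly strongly elliptic second-order system — a small $\eta$-dependent perturbation of the constant-coefficient Lamé operator $-\mu\Delta-\lambda\nabla\mm{div}$. This is exactly why the hypothesis $\|\nabla\eta^0\|_3\leqslant\delta_3$ with $\delta_3$ small is needed.

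I would then iterate on the pair $(\eta,u)$: extend $(\eta^0,u^0)$ constantly in time, and given the $n$-th iterate $(\eta^n,u^n)$ with $\sup_{\overline{I_T}}\|\nabla\eta^n\|_3\leqslant 2\delta_3$, let $(\eta^{n+1},u^{n+1})$ solve the \emph{linear} coupled system obtained from the displayed equations by freezing the coefficients $J,\mathcal{A}$ at $\eta^n$, with $(\eta^{n+1},u^{n+1})|_{t=0}=(\eta^0,u^0)$. This is a standard linear hyperbolic–parabolic system: Galerkin approximation plus energy estimates — which exploit the cancellation $\int_{\mathbb R^3}\kappa\Delta\eta^{n+1}\cdot u^{n+1}\,\mathrm{d}y=-\tfrac{\kappa}{2}\tfrac{\mathrm{d}}{\mathrm{d}t}\|\nabla\eta^{n+1}\|_0^2$ already used in \eqref{3.1}, together with the Piola identity $\partial_k(J\mathcal{A}_{ik})=0$ to kill the worst commutators — produce a unique $u^{n+1}\in\mathfrak U_T$ with $\|u^{n+1}\|_{\mathfrak U_T}$ bounded in terms of $B$, $\kappa$, $P'(\cdot)$, $\mu$, $\lambda$, and with the parabolic gain $u^{n+1}\in L^2(I_T;H^4)$. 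Hence $\eta^{n+1}(t)=\eta^0+\int_0^t u^{n+1}\,\mathrm{d}\tau\in H^4$ for every $t$ (using $\eta^0\in H^4$), and $\sup_{\overline{I_T}}\|\nabla\eta^{n+1}\|_3\leqslant\|\nabla\eta^0\|_3+T^{1/2}\|\nabla u^{n+1}\|_{L^2(I_T;H^3)}\leqslant\delta_3+C T^{1/2}$. Choosing $T$ small enough closes the induction, so the whole sequence stays in the closed set $\{\,(\eta,u):\ \sup_{\overline{I_T}}\|\nabla\eta\|_3\leqslant 2\delta_3,\ \|u\|_{\mathfrak U_T}\leqslant M\,\}$ for a fixed $M$.

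Convergence follows by showing the iteration is a contraction in a weaker norm: subtracting the equations for $(\eta^{n+1}-\eta^n,u^{n+1}-u^n)$ and testing with the velocity difference (and, where needed, one spatial derivative), the product and commutator estimates for $J-1$, $\tilde{\mathcal{A}}$ and $\mathcal{A}$ — handled exactly as in Lemma \ref{4.58} and the proof of Lemma \ref{4.19}, using the uniform bound $\|\nabla\eta^n\|_3\leqslant 2\delta_3$ — give $\sup_{\overline{I_T}}\|(\eta^{n+1}-\eta^n,u^{n+1}-u^n)\|_{H^1\times L^2}\lesssim T^{1/2}\sup_{\overline{I_T}}\|(\eta^n-\eta^{n-1},u^n-u^{n-1})\|_{H^1\times L^2}$, a contraction for $T$ small. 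Interpolating with the uniform higher-order bounds upgrades the convergence and identifies the limit $(\eta,u)$ as a solution of \eqref{1.21} with $u\in\mathfrak U_T$ and $\eta\in C^0(\overline{I_T};H^4)$ (again via $\eta=\eta^0+\int_0^t u\,\mathrm{d}\tau$ and $u\in L^2(I_T;H^4)$). Uniqueness is obtained from the same difference estimate together with Gronwall's inequality; the bound $\sup_{\overline{I_T}}\|\nabla\eta\|_3\leqslant 2\delta_3$ is inherited from the iteration, and for $\delta_3$ small the embedding $H^3\hookrightarrow C^0$ then guarantees that $y\mapsto\eta(y,t)+y$ is a $C^1$-diffeomorphism with $\det(\nabla\eta+I)\geqslant 1/2$.

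The main obstacle is the mixed transport–parabolic structure: because $\eta_t=u$ gives no gain of regularity, the $H^4$-bound on $\eta$ must be recovered entirely from the parabolic smoothing $u\in L^2(I_T;H^4)$ via time integration, and this has to be arranged while the top-order-in-$u$ part of $\mathcal N$ is carried along inside a principal operator that is elliptic only as long as $\|\nabla\eta\|_3$ stays in the small regime — so the constant $\delta_3$, the lifespan $T$, and the closed ball of the iteration must all be tuned consistently, and the energy estimates have to be performed on the coupled pair $(\eta,u)$ (rather than on the $u$-equation alone) to dispose of the $\kappa\Delta\eta$ forcing. Once this framework is in place, the pressure term $-J\nabla_{\mathcal{A}}P(J^{-1})$ and the remaining lower-order corrections are routine and parallel the estimates already carried out in Lemmas \ref{4.58}--\ref{2.11}.
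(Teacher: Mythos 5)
Since the paper does not actually prove Proposition~\ref{2.23} but defers it entirely to \cite[Proposition 2.1]{MR4217912}, there is no in-paper argument to compare against; your linearization--iteration scheme is the standard machinery such a citation points to, and its overall structure is sound: freeze $J,\mathcal A$ at the previous iterate so that the frozen Lam\'e operator is uniformly elliptic under the smallness $\|\nabla\eta^n\|_3\leqslant 2\delta_3$, solve the resulting linear hyperbolic--parabolic system by Galerkin plus $H^3$ energy estimates using the cancellation $\kappa(\Delta\eta^{n+1}\,|\,u^{n+1})_{H^3}=-\tfrac{\kappa}{2}\tfrac{\mathrm d}{\mathrm dt}\|\nabla\eta^{n+1}\|_3^2$ and the Piola identity $\partial_k(J\mathcal A_{ik})=0$, recover $\eta^{n+1}\in C^0(\overline{I_T};H^4)$ by integrating the parabolic gain $u^{n+1}\in L^2(I_T;H^4)$, and contract in a lower-order norm. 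This matches the expected proof.

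One point in your sketch does not deliver what the proposition states and is worth flagging as a gap. The proposition asserts a lifespan $T$ depending only on $B$, $P(\cdot)$, $\mu$, $\lambda$ — in particular independent of $\kappa$. Your closure of the induction uses $\sup_{\overline{I_T}}\|\nabla\eta^{n+1}\|_3\leqslant\delta_3+T^{1/2}\|\nabla u^{n+1}\|_{L^2(I_T;H^3)}$, but the energy identity controls the dissipation only through $\|\nabla u^{n+1}\|_{L^2(I_T;H^3)}\lesssim\bigl(\|u^0\|_3^2+\kappa\|\nabla\eta^0\|_3^2\bigr)^{1/2}\lesssim B+\sqrt{\kappa}\,\delta_3$, so the $T$ you can afford is of order $\delta_3^2/(B+\sqrt{\kappa}\,\delta_3)^2$, which shrinks to zero as $\kappa\to\infty$. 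To obtain the stated $\kappa$-uniformity you must also invoke the complementary conserved part of the same energy identity, $\kappa\|\nabla\eta^{n+1}(t)\|_3^2\lesssim\|u^0\|_3^2+\kappa\|\nabla\eta^0\|_3^2+(\text{commutators})$, which gives $\|\nabla\eta^{n+1}(t)\|_3\lesssim\kappa^{-1/2}B+\delta_3$ uniformly in $t$; this already stays $\leqslant 2\delta_3$ once $\kappa\gtrsim B^2/\delta_3^2$, so it controls the large-$\kappa$ regime without any shrinking of $T$, while your $T^{1/2}$-bound handles all bounded $\kappa$. Without this two-regime bookkeeping you obtain local existence, but with a $\kappa$-dependent lifespan, which is strictly weaker than the claim. (A secondary, more minor remark: when bounding the coefficient-difference terms such as $\nabla(\eta^n-\eta^{n-1})\,\nabla^2u^n$ in the contraction step, you should either work in $H^2\times H^1$ or use the divergence structure of $J\Delta_{\mathcal A}$ to integrate by parts before estimating, so that $\|\nabla u^n\|_{L^\infty}\lesssim\|u^n\|_3$ appears rather than $\|\nabla^2u^n\|_{L^\infty}$, which is not pointwise controlled in time.)
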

	\begin{pf}
		Proposition \ref{2.23} can be easily obtained by employing the same arguments as for \cite[Proposition 2.1]{MR4217912},  which has proved the local well-posedness result of the incompressible viscoelastic fluids in $\mathbb{T}^3$,  and hence its proof will be omitted here.
	\end{pf}
	\begin{pro}
		\label{2.24}
		There exists a constant $\delta_4 >0$, such that if the function $\varphi\in H^4$ satisfies $\|\nabla \varphi\|_3 \leqslant \delta_4$, then it holds (with the possibility of redefining $\varphi$ on a set of measure zero) that
		\begin{align}
			&
			\det \nabla\psi\geqslant 1/2 \mbox{ and }\psi : \mathbb{R}^3\to \mathbb{R}^3 \mbox{ is a }C^2\mbox{ homeomorphism mapping}
			,\nonumber
		\end{align}
		where $\psi:=\varphi+y$.
	\end{pro}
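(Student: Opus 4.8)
The plan is to prove Proposition \ref{2.24} by first establishing that $\psi=\varphi+y$ is locally a $C^2$-diffeomorphism via the inverse function theorem, then upgrading this to a global homeomorphism using a standard degree-theoretic / covering-space argument, with the key quantitative input being that $\nabla\varphi$ is small in a space that embeds into $L^\infty$. Since $\varphi\in H^4(\mathbb{R}^3)$, the Sobolev embedding $H^3\hookrightarrow L^\infty$ (indeed $H^3\hookrightarrow C^1$ in $\mathbb{R}^3$) gives $\|\nabla\varphi\|_{L^\infty}\lesssim\|\nabla\varphi\|_3\leqslant\delta_4$, so choosing $\delta_4$ with $C\delta_4\leqslant 1/2$ yields $\|\nabla\varphi\|_{L^\infty}\leqslant 1/2$ after possibly redefining $\varphi$ on a null set; moreover $\varphi\in C^2$ there. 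Then $\nabla\psi=I+\nabla\varphi$ and, since the eigenvalues of $I+\nabla\varphi$ lie in a small neighborhood of $1$, we get $\det\nabla\psi\geqslant 1/2$ pointwise (e.g.\ from $|\det(I+M)-1|\lesssim|M|$ for $|M|$ small). By the inverse function theorem, $\psi$ is a local $C^2$-diffeomorphism.

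The main step is to promote local invertibility to global bijectivity. First I would show $\psi$ is injective: for $y_1\neq y_2$, write $\psi(y_1)-\psi(y_2)=(y_1-y_2)+\int_0^1\nabla\varphi(y_2+s(y_1-y_2))(y_1-y_2)\,\mmd s$, so $|\psi(y_1)-\psi(y_2)|\geqslant|y_1-y_2|-\|\nabla\varphi\|_{L^\infty}|y_1-y_2|\geqslant\tfrac12|y_1-y_2|>0$. This simultaneously shows $\psi$ is proper (a bi-Lipschitz-type lower bound forces $|\psi(y)|\to\infty$ as $|y|\to\infty$). Next, surjectivity: $\psi(\mathbb{R}^3)$ is open (since $\psi$ is a local diffeomorphism) and closed (since $\psi$ is proper, the image of a closed set is closed), hence $\psi(\mathbb{R}^3)=\mathbb{R}^3$ by connectedness of $\mathbb{R}^3$. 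Therefore $\psi$ is a continuous bijection; being a local diffeomorphism, its inverse is also $C^2$, so $\psi$ is a $C^2$-homeomorphism (in fact a $C^2$-diffeomorphism) of $\mathbb{R}^3$.

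I expect the main obstacle to be purely expository rather than mathematical: one must be careful that $\varphi\in H^4$ only gives a representative that is $C^2$ away from a null set, so the statement ``with the possibility of redefining $\varphi$ on a set of measure zero'' must be invoked at the outset, after which all pointwise estimates ($\|\nabla\varphi\|_{L^\infty}\leqslant 1/2$, $\det\nabla\psi\geqslant 1/2$) hold everywhere. The rest is the classical fact that a proper local homeomorphism from $\mathbb{R}^n$ to $\mathbb{R}^n$ is a global homeomorphism, which I would either cite directly (Hadamard's global inverse function theorem) or run through the short open-and-closed argument above; either way, this proposition is essentially standard and one can also simply refer to the analogous statement in \cite{MR4217912} and adapt the constant $\delta_4$ to the whole-space setting.
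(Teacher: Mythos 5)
The paper does not actually prove Proposition~\ref{2.24}: its proof is a one-line citation to \cite[Proposition~2.2]{MR4217912}, where an analogous statement is established in the periodic setting $\mathbb{T}^3$. Your proposal supplies a self-contained argument and it is correct. The ingredients are exactly what one needs: the embeddings $H^4(\mathbb{R}^3)\hookrightarrow C^2$ and $H^3(\mathbb{R}^3)\hookrightarrow L^\infty$ give a $C^2$ representative with $\|\nabla\varphi\|_{L^\infty}\lesssim\|\nabla\varphi\|_3$, hence $\det\nabla\psi\geqslant 1/2$ pointwise for $\delta_4$ small; the fundamental theorem of calculus yields the bi-Lipschitz lower bound $|\psi(y_1)-\psi(y_2)|\geqslant\tfrac12|y_1-y_2|$, which gives injectivity and properness at once; and the open-and-closed argument (equivalently, Hadamard's global inverse function theorem for proper local diffeomorphisms) gives surjectivity. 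In fact you prove the slightly stronger conclusion that $\psi$ is a $C^2$-diffeomorphism, which is what the paper really uses downstream in \eqref{2312018031adsadfa21601xx}. The only thing I would flag is that the adaptation from $\mathbb{T}^3$ to $\mathbb{R}^3$ is not entirely cosmetic -- on the torus, surjectivity of a degree-one local diffeomorphism is essentially automatic from compactness, whereas on $\mathbb{R}^3$ one genuinely needs the properness step you supplied; so your argument is arguably more complete than the citation it replaces, and the closing remark about adapting $\delta_4$ is the right caveat.
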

	\begin{pf}
		Please refer to \cite[Proposition 2.2]{MR4217912} for a proof.
	\end{pf}
	
	\subsection{Global existence of unique solutions}\label{3x}
	
	With the $\textit{a priori}$ estimate \eqref{3.13} and Propositions \ref{2.23}--\ref{2.24} in hand, we can   easily establish   Theorem \ref{1.12} as follows.
	
	Let the initial data $(\eta^0,u^0)\in H^4_* \times H^3$  and  the elasticity coefficient $\kappa$   satisfy
	\begin{align}
		\max\{K , K^4\}/\kappa\leqslant\min\{\delta_2, \delta_3^2/2,  \delta_4^2/2\},
		\label{2.26}
	\end{align}
	where the positive constant $K$ is defined in \eqref{2.25} with $c_2 \geqslant 1$. Since
	\begin{align}
		K^2 \leqslant \frac{2 K}{3} +\frac{K^4}{3} \leqslant 2\max\{K, K^4\},
		\label{2.27}
	\end{align}
	we can easily obtain from \eqref{2.26} that $\|\nabla \eta^0\|_3 \leqslant \delta_3$. Therefore, according to Proposition \ref{2.23}, the Cauchy problem \eqref{1.21} has a unique local classical solution $(\eta, u)\in C^0([0, T^\mm{max}); H^4) \times\mathfrak{U}_{T^\mm{max}}$ with the maximal  existence of time $T^\mm{max}$ satisfying
	\begin{itemize}
		\item for any $T\in  I_{T^{\max}}$,
		the solution $(\eta,u)$ belongs to $C^0(\overline{I_T};{H}^{4})\times \mathfrak{U}_T$ and $$\sup_{t\in [0,T]} \|\nabla \eta\|_3\leqslant 2\delta_2;$$
		\item $\limsup_{t\to T^{\max} }\|\nabla \eta( t)\|_3 > \delta_2$ or $\limsup_{t\to T^{\max} }\|(\nabla \eta,u)( t)\|_3=\infty$,
		if $T^{\max}<\infty$.
	\end{itemize}
	
	Define
	$$T^\star = \sup \left\{T \in I_{T^\mm{max}} ~\left|~ \mathcal{E}(t) \leqslant K^2 \mbox{ for any } t\leqslant T\right.\right\}.$$
	Then, from the definition of $K$, it is obvious that the value of $T^\star$ as defined above makes sense and $T^\star > 0$. In addion, since $\|\nabla\eta(t)\|_3 \leqslant K/\sqrt{\kappa} \leqslant \delta_4$ for any $t\in (0,T^\star)$, one gets by Proposition \ref{2.24} that $\eta(t) \in H_*^4$ for any $t \in (0,T^\star)$.
	Next it suffices to derive that $T^\star = \infty$.
	
	We assume that  $T^\star = \infty$ fails, then $T^\star < \infty$. Using the inequality $K/\sqrt{\kappa}\leqslant \delta_3$ from \eqref{2.26} and \eqref{2.27}, then applying Proposition \ref{2.23}, we can easily observe from the definitions of $T^\mm{max}$ and $T^\star$ that  $T^\star<T^{\max}$ and
	\begin{equation}
		\label{2.28}
		\mathcal{E}(  T^\star)=K^2.
	\end{equation}
	Noting that $\max\{K,K^4\}/\kappa \leqslant \delta_2$ and ${\sup}_{0 \leqslant t \leqslant T^\star} \mathcal{E}(t)  \leqslant K^2$, by the regularity of $(\eta, u)$, we can deduce that the solution $(\eta, u)$ satisfies the stability estimate \eqref{2.22} for any $t\in I_{T^\star}$. It further can be derived from \eqref{3.13} that, for any $t \in [0, T^\star]$,
	$$\mathcal{E}(t) \leqslant   K^2 / 2,$$
	which is in contradiction with \eqref{2.28}. Hence, $ T^*=\infty$. Therefore we immediately obtain the global-in-time existence of classical solutions with the stability estimate \eqref{0.11} in Theorem \ref{1.12} by taking $c_1:= \min\{\delta_2, \delta_3^2/2, \delta_4^2/2\}$.
	
	Note that $\|\nabla\eta(t)\|_3\leqslant 2\delta_3$ for any $t>0$. The unique solvability of global solutions follows directly from the local one stated in Proposition \ref{2.23}.
	
	\section{Proof of Theorem \ref{1.10}}
	\label{202240110261721}
	Applying the similar arguments used for Theorem \ref{1.12}, we can easily obtain a unique global classical solution  $(\eta^l,u^l)\in C^0(\mathbb{R}_0^+; H^4 \times H^3)$ to the linear Cauchy problem \eqref{1.13}; moreover the linear solution enjoys the following  stability estimate with temporal decay, as well as the nonlinear solution $(\eta,u)$ provided by Theorem \ref{1.12}:
	\begin{align}
		\label{1.8}
		\tilde{\mathcal{E}}(t) + \int_0^t \tilde{\mathcal{D}}(\tau) \mm{d} \tau \lesssim \mathcal{E}^0,
	\end{align}where
	$\tilde{\mathcal{E}}(t)$ and $\tilde{\mathcal{D}}(t)$ are defined by $ {\mathcal{E}}(t)$ and $ {\mathcal{D}}(t)$ with $(\eta^l, u^l)$ in place of  $(\eta, u)$, respectively.
	
	Letting $(\eta^d, u^d) = (\eta, u) -(\eta^l, u^l)$, then the error function $(\eta^d, u^d)$ satisfies
	\begin{equation}
		\label{3.14}
		\begin{cases}
			\eta^d_t =u^d,\\
			u^d_t -\nabla( P'(1)
			\mm{div}\eta^d +\lambda\mm{div} u^d)-\Delta(\mu u^d
			+\kappa\eta^d)= \mathcal{N},\\
			(\eta^d, u^d)|_{t =0} = (0, 0).
		\end{cases}
	\end{equation}
	Keeping in mind that $\bar{\mathcal{E}}^0 =0$, and then
	following the derivation of Lemma \ref{3.12}, we get
	\begin{align}
		\bar{\mathcal{E}}(t) + \int_0^t \bar{\mathcal{D}}(\tau) \mm{d}\tau & \lesssim \int_0^t \left| \left(\mathcal{N} \left|\, u^d -\frac{\mu}{4 }\Delta \eta^d -\frac{\mu}{8 }(\tau +1)\Delta u^d \right. \right)_{H^2} \right| \mm{d} \tau\nonumber\\
		& \quad +\int_0^t \left|\left(\mathcal{N}\left|\,\alpha(\tau +1) \Delta^2 \eta^d +\frac{\alpha}{2}(\tau +1)^2\Delta^2 u^d \right.\right)_{H^1}\right| \mm{d} \tau,
		\label{3.16}
	\end{align}
where
		$\bar{\mathcal{E}}(t)$ and $\bar{\mathcal{D}}(t)$ are defined by $ {\mathcal{E}}(t)$ and $ {\mathcal{D}}(t)$ with $(\eta^d, u^d)$ in place of  $(\eta, u)$, respectively. Since the solutions $(\eta, u)$ and $(\eta^l, u^l)$ enjoy the estimates \eqref{0.11} and \eqref{1.8}, respectively, one can get by \eqref{4.20} and \eqref{4.59} that
	\begin{align}
		& \int_0^t \left(\mathcal{N}\left|\,u^d -\frac{\mu}{4 }\Delta \eta^d -\frac{\mu}{8 }(\tau +1)\Delta u^d \right.\right)_{H^2} \mm{d} \tau\nonumber\\
		& +\int_0^t \left(\mathcal{N} \left|\,\alpha(\tau +1) \Delta^2 \eta^d +\frac{\alpha}{2}(\tau +1)^2\Delta^2 u^d \right.\right)_{H^1} \mm{d} \tau \nonumber\\
		&\lesssim \int_0^t \|\mathcal{N} \|_2 \left( \|(u, u^l)\|_2 + \|\Delta(\eta, \eta^l)\|_2 +( \tau +1)\|\Delta (u, u^l)\|_2 \right.\nonumber\\
		& \quad \left.+ (\tau +1)\|\Delta(\eta, \eta^l)\|_2 + (\tau +1)^2 \|\nabla\Delta(u, u^l)\|_1 \right) \mm{d} \tau \lesssim \kappa^{-\frac{1}{2}}
		\left(\mathcal{E}^\frac{3}{2}_0 +\mathcal{E}_0^3 \right).
		\label{3.15}
	\end{align}
	Inserting \eqref{3.15} into \eqref{3.16} arises the desired estimate \eqref{1.14}.
	This completes the proof of Theorem \ref{1.10}.
	
	\section{Proof of Theorem \ref{1.24}}\label{4}
	
	This section is devoted to completing the proof of Theorem \ref{1.24}. We first present the analysis of the spectral structure of the linear problem \eqref{1.13}, and then obtain the optimal $L^2$-norm decay rates for the linear solution $(\eta^l, u^l)$ of \eqref{1.13} in Section \ref{20241027}. Making use of the decay estimate of $(\eta,u)$ in \eqref{0.11} and the Duhamel's principle, we then further obtain better decay estimates, as well as the ones of the linear solution, for the solution $(\eta, u)$ of the nonlinear problem \eqref{1.21}  in Section \ref{202410272204}.
	
	\subsection{Decay estimates for the linear problem}\label{20241027}
	
	Note that the Fourier transform of the Green matrix
	$$
	G = \begin{bmatrix}
		0 & I_{3 \times 3} \\
		P'(1)\nabla\mathrm{div} +\kappa \Delta & \lambda \nabla \mathrm{div} +\mu \Delta
	\end{bmatrix}
	$$
	corresponding to the equations \eqref{1.13}$_1$ and \eqref{1.13}$_2$ generates a $6 \times 6$ matrix, which increases in the complexities in spectral analysis. Fortunately, this trouble can be effectively addressed by using the Hodge decomposition technique introduced in \cite{MR1779621}. Define the pseudo-differential operator $\Lambda^s$ by
		$$\Lambda^s f := (-\Delta)^{\frac{s}{2}} f = \mathscr{F}^{-1}\big(|\xi|^s \widehat{f}\big) \mbox{ for }s\in \mathbb{R},$$
	where $ \mathscr{F}(f)$ or $\widehat{f}$ represents the Fourier transform of $f$, and $\mathscr{F}^{-1}$ is its inverse. Let
		\begin{align}
			d^l = \Lambda^{-1}\mathrm{div}\eta^l, \, M^l = \Lambda^{-1}\mathrm{curl}\eta^l, \, 
			\mathscr{A}^l =\Lambda^{-1}\mathrm{div}u^l, \, \mbox{ and }
			\mathscr{M}^l = \Lambda^{-1}\mathrm{curl}u^l,
			\label{4.45}
		\end{align}
		where $(\mathrm{curl} z)_{ij}=\partial_{j} z^i -\partial_{i} z^j$ and $(\mathrm{curl} z)_{ij}$ is the $ij$-th entry of the $3\times 3$ matrix, we can decompose the linear problem $\eqref{1.13}$ into the following two linear initial-value problems:
	\begin{equation}\label{4.60}
		\begin{cases}
			d^l_t = \mathscr{A}^l,\\
			\mathscr{A}^l_t -(P'(1) +\kappa)\Delta d^l -(\lambda +\mu)\Delta \mathscr{A}^l  =0,\\
			(d^l, \mathscr{A}^l)|_{t = 0} =(d^l_0, \mathscr{A}^l_0)= (\Lambda^{-1}\mathrm{div}\eta^0, \Lambda^{-1}\mathrm{div}u^0),
		\end{cases}
	\end{equation}
	and
	\begin{equation}\label{4.61}
		\begin{cases}
			M^l_t = \mathscr{M}^l,\\
			\mathscr{M}^l_t -\kappa \Delta M^l  - \mu\Delta \mathscr{M}^l= 0,\\
			(M^l, \mathscr{M}^l)|_{t =0} =(M^l_0, \mathscr{M}^l_0)=(\Lambda^{-1}\mathrm{curl}\eta^0, \Lambda^{-1}\mathrm{curl}u^0).
		\end{cases}
	\end{equation}
	
	Due to the similarity between \eqref{4.60} and \eqref{4.61}, our analysis will be concentrated solely on the problem \eqref{4.60}. Letting
	$$\tilde{U}^l =\begin{bmatrix}  d^l\\ \mathscr{A}^l\end{bmatrix}\mbox{ and }
	\tilde{G} = \begin{bmatrix}
		0 & 1 \\
		( P'(1) +\kappa) \Delta & (\lambda +\mu) \Delta
	\end{bmatrix},
	$$
	then we can reformulate the linear problem \eqref{4.60} as follows
	\begin{equation}\label{4.35}
		\begin{cases}
			\partial_t \tilde{U}^l = \tilde{G} \tilde{U}^l,\\
			\left.\tilde{U}^l\right|_{t =0} = \tilde{U}_0^l.
		\end{cases}
	\end{equation}
	We formally denote by $e^{t\tilde{G}}$ the semigroup generated by $\tilde{G}$, as in \cite{MR4152207, MR4768701}. Thus, the solution of \eqref{4.35} is written as $\tilde{U}(t) =e^{t\tilde{G}} \tilde{U}_0^l$.
	
	To investigate the large time behavior of $\tilde{U}(t) =e^{t\tilde{G}} \tilde{U}_0^l$, we apply the Fourier transform with respect to $x$ to \eqref{4.35}, and then obtain that
	\begin{equation}\label{4.36}
		\begin{cases}
			\partial_t \widehat{\tilde{U}^l} = \tilde{\mathcal{G}}(\xi) \widehat{\tilde{U}^l},\\
			\left.\widehat{\tilde{U}^l}\right|_{t =0} = \widehat{\tilde{U}_0^l},
		\end{cases}
	\end{equation}
	where
	$$
	\tilde{\mathcal{G}}(\xi) := \widehat{\tilde{G}}= \begin{bmatrix}
		0 & 1 \\
		-( P'(1) +\kappa) |\xi|^2 & -(\lambda +\mu) |\xi|^2
	\end{bmatrix}
	$$
	and $\xi =(\xi_1, \xi_2, \xi_3)^\top $.  Therefore, the roots of the characteristic equation
	of the above matrix
	\begin{equation}\label{4.36saf}\gamma^2 +(\lambda +\mu)|\xi|^2 \gamma +(P'(1) +\kappa)|\xi|^2 =0
	\end{equation}
	read as follows:
	\begin{align}
		\gamma_\pm = \left( -(\lambda +\mu) |\xi|^2 \pm {\mm{i}}|\xi|\sqrt{4(P'(1) +\kappa) -(\lambda +\mu)^2|\xi|^2} \right)/2,
		\label{4.44}
	\end{align}
	which are called the characteristic eigenvalues.
	Inspired of \cite{MR4643428}, we derive the following expression for $ e^{t\tilde{\mathcal{G}}(\xi)} \widehat{\tilde{U}_0^l}$.
	\begin{lem}
		\label{4.37}
		The solution of the problem \eqref{4.36} is written as
		\begin{align}
			\widehat{\tilde{U}^l} = e^{t\tilde{\mathcal{G}}(\xi)} \widehat{\tilde{U}_0^l}: =\begin{bmatrix}
				\tilde{\mathcal{G}}_{11}(\xi, t) & \tilde{\mathcal{G}}_{12}(\xi, t)\\
				\tilde{\mathcal{G}}_{21}(\xi, t) & \tilde{\mathcal{G}}_{22}(\xi, t)
			\end{bmatrix}\begin{bmatrix}
				\widehat{d^l_0} \\
				\widehat{\mathscr{A}^l_0}
			\end{bmatrix},
			\label{4.38}
		\end{align}
		where  $\tilde{\mathcal{G}}_{ij}$ for $1\leqslant i$, $j\leqslant 2$ are provided as follows:
		\begin{enumerate}[(1)]
			\item If $|\xi| \neq  {2\sqrt{P'(1) +\kappa}}/({\lambda +\mu})$, i.e. $\gamma_+ \neq \gamma_-$, then
			\begin{align}
				\label{202411101859}
				\begin{cases}
					\tilde{\mathcal{G}}_{11}(\xi, t)  =({\gamma_+ e^{\gamma_-t} -\gamma_- e^{\gamma_+t}})/({\gamma_+ -\gamma_-}),\\
					\tilde{\mathcal{G}}_{12}(\xi, t)  = ({e^{\gamma_+t} -e^{\gamma_-t}})/({\gamma_+ -\gamma_-}),\\
					\tilde{\mathcal{G}}_{21}(\xi, t)  =- (P'(1) +\kappa) |\xi|^2 \tilde{\mathcal{G}}_{12}(\xi, t),  \\
					\tilde{\mathcal{G}}_{22}(\xi, t)  = \tilde{\mathcal{G}}_{11}(\xi, t) -(\lambda +\mu) |\xi| ^2 \tilde{\mathcal{G}}_{12}(\xi, t) .
				\end{cases}
			\end{align}
			\item If $|\xi| = {2\sqrt{P'(1) +\kappa}}/({\lambda +\mu})$, i.e. $\gamma_+ =\gamma_-$, then
			\begin{align}
				\begin{cases}\tilde{\mathcal{G}}_{11}(\xi, t)  =(1 -\gamma_+ t )e^{\gamma_+ t}, \ \tilde{\mathcal{G}}_{12}(\xi, t)  = te^{\gamma_+ t}, \\
					\tilde{\mathcal{G}}_{21}(\xi, t) =-{4(P'(1) +\kappa)^2}{(\lambda +\mu)^{-2}} \tilde{\mathcal{G}}_{12}(\xi, t),\\
					\tilde{\mathcal{G}}_{22}(\xi, t)  = \tilde{\mathcal{G}}_{11}(\xi, t) - {4(P'(1) +\kappa)}( {\lambda +\mu})^{-1}  \tilde{\mathcal{G}}_{12}(\xi, t).
				\end{cases}\label{2024111018591}
			\end{align}
		\end{enumerate}
	\end{lem}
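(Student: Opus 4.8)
The plan is to solve the $2\times 2$ linear ODE system \eqref{4.36} explicitly by diagonalizing (or Jordan-decomposing) the matrix $\tilde{\mathcal{G}}(\xi)$, then read off the entries of $e^{t\tilde{\mathcal{G}}(\xi)}$ and verify directly that they satisfy \eqref{4.36}. Since $\tilde{\mathcal{G}}(\xi)$ is a constant matrix (in $t$) for each fixed $\xi$, the semigroup $e^{t\tilde{\mathcal{G}}(\xi)}$ is literally the matrix exponential, and the characteristic eigenvalues $\gamma_\pm$ in \eqref{4.44} are its eigenvalues; the whole computation is a case distinction on whether $\gamma_+\neq\gamma_-$ or $\gamma_+=\gamma_-$.

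First I would treat the generic case $|\xi|\neq 2\sqrt{P'(1)+\kappa}/(\lambda+\mu)$, i.e. $\gamma_+\neq\gamma_-$. In this case one can write $e^{t\tilde{\mathcal{G}}(\xi)}$ as a linear combination $a(t)I + b(t)\tilde{\mathcal{G}}(\xi)$ using the spectral (Lagrange-interpolation) formula for functions of a $2\times 2$ matrix with distinct eigenvalues:
\begin{equation*}
e^{t\tilde{\mathcal{G}}(\xi)} = \frac{\gamma_+ e^{\gamma_- t} - \gamma_- e^{\gamma_+ t}}{\gamma_+ -\gamma_-}\, I + \frac{e^{\gamma_+ t} - e^{\gamma_- t}}{\gamma_+ - \gamma_-}\, \tilde{\mathcal{G}}(\xi).
\end{equation*}
Substituting the explicit form of $\tilde{\mathcal{G}}(\xi)$ and using $\gamma_+\gamma_- = (P'(1)+\kappa)|\xi|^2$ and $\gamma_++\gamma_- = -(\lambda+\mu)|\xi|^2$ from \eqref{4.36saf} immediately yields the four formulas in \eqref{202411101859}: the $(1,1)$ and $(1,2)$ entries are just the scalar coefficients $a(t)$ and $b(t)$, the $(2,1)$ entry is $b(t)$ times the $(2,1)$ entry of $\tilde{\mathcal{G}}(\xi)$, namely $-(P'(1)+\kappa)|\xi|^2\tilde{\mathcal{G}}_{12}$, and the $(2,2)$ entry is $a(t)$ plus $b(t)$ times the $(2,2)$ entry $-(\lambda+\mu)|\xi|^2$, giving $\tilde{\mathcal{G}}_{11} - (\lambda+\mu)|\xi|^2\tilde{\mathcal{G}}_{12}$. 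Alternatively, and perhaps cleaner to present, I would simply \emph{define} the $\tilde{\mathcal{G}}_{ij}$ by the stated formulas and verify by direct differentiation that $\partial_t\tilde{\mathcal{G}}_{ij}$ matches the ODE \eqref{4.36}: this reduces to checking that $t\mapsto e^{\gamma_\pm t}$ solves the scalar characteristic equation $\gamma_\pm^2 + (\lambda+\mu)|\xi|^2\gamma_\pm + (P'(1)+\kappa)|\xi|^2 = 0$, together with the initial conditions $\tilde{\mathcal{G}}(\xi,0) = I$.

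For the degenerate case $|\xi| = 2\sqrt{P'(1)+\kappa}/(\lambda+\mu)$, the eigenvalue $\gamma_+ = \gamma_- = -(\lambda+\mu)|\xi|^2/2$ is a double root, $\tilde{\mathcal{G}}(\xi)$ has a nontrivial Jordan block, and the matrix-function formula becomes $e^{t\tilde{\mathcal{G}}(\xi)} = e^{\gamma_+ t}I + t e^{\gamma_+ t}(\tilde{\mathcal{G}}(\xi) - \gamma_+ I)$; expanding this and using $(\lambda+\mu)|\xi|^2 = 4(P'(1)+\kappa)/(\lambda+\mu)$ gives \eqref{2024111018591}. This case can also be obtained by taking the limit $\gamma_-\to\gamma_+$ in the distinct-eigenvalue formulas, which is a useful consistency check.

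There is no serious obstacle here — this is an elementary matrix-exponential computation — so the "hard part" is purely bookkeeping: keeping track of the substitutions $\gamma_+ + \gamma_- = -(\lambda+\mu)|\xi|^2$ and $\gamma_+\gamma_- = (P'(1)+\kappa)|\xi|^2$ consistently, and handling the measure-zero frequency set $|\xi| = 2\sqrt{P'(1)+\kappa}/(\lambda+\mu)$ separately (though for the later $L^2$ decay estimates this set is irrelevant). I would present the proof as a direct verification: state the formulas, differentiate in $t$, and confirm both the ODE system \eqref{4.36} and the initial condition $e^{0\cdot\tilde{\mathcal{G}}(\xi)} = I$ hold in each case, invoking uniqueness for the linear ODE to conclude.
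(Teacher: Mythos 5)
Your proposal is correct and follows essentially the same route as the paper. The paper derives the spectral projectors by explicit diagonalization (respectively Jordan decomposition), then simplifies them to $\mathcal{P}_1=\frac{\tilde{\mathcal{G}}(\xi)-\gamma_- I}{\gamma_+-\gamma_-}$, $\mathcal{P}_2=\frac{\tilde{\mathcal{G}}(\xi)-\gamma_+ I}{\gamma_--\gamma_+}$ (respectively $\tilde{\mathcal{P}}_1=\tilde{\mathcal{G}}(\xi)-\gamma_+ I$); writing $e^{t\tilde{\mathcal{G}}}=e^{\gamma_+t}\mathcal{P}_1+e^{\gamma_-t}\mathcal{P}_2$ and collecting the $I$ and $\tilde{\mathcal{G}}$ coefficients is algebraically identical to your Lagrange-interpolation formula $e^{t\tilde{\mathcal{G}}}=a(t)I+b(t)\tilde{\mathcal{G}}$, and your alternative of defining the $\tilde{\mathcal{G}}_{ij}$ and verifying by differentiation is a legitimate presentation of the same elementary matrix-exponential computation.
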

	\begin{pf} We derive the  formula \eqref{4.38} by tow cases.
		
		(1) For $|\xi| \neq {2\sqrt{P'(1) +\kappa}}/({\lambda +\mu})$, i.e. $\gamma_+ \neq \gamma_-$, we can derive there exists an invertible matrix $\mathcal{P}$ such that
		\begin{align}
			\mathcal{P}^{-1} \tilde{\mathcal{G}}(\xi) \mathcal{P} =\begin{bmatrix}
				\gamma_+ & 0\\
				0 & \gamma_-
			\end{bmatrix} =:\mathcal{J}.\nonumber
		\end{align}
		By making tedious calculations, it is clear that
		\begin{align}
			e^{t\tilde{\mathcal{G}}(\xi)} & = e^{\gamma_+ t}\mathcal{P}\begin{bmatrix}
				1 & 0\\
				0 & 0
			\end{bmatrix} \mathcal{P}^{-1} +e^{\gamma_- t}\mathcal{P}\begin{bmatrix}
				0 & 0\\
				0 & 1
			\end{bmatrix} \mathcal{P}^{-1} \nonumber\\
			& =e^{\gamma_+ t} \mathcal{P}_1(\xi) +e^{\gamma_- t} \mathcal{P}_2(\xi).
			\label{A.1}
		\end{align}
		Here, the projectors $\mathcal{P}_i(\xi)$, $i=1, 2$, can be computed as follows
		\begin{align}
			\mathcal{P}_1(\xi) & = \mathcal{P}\begin{bmatrix}
				1 & 0\\
				0 & 0
			\end{bmatrix} \mathcal{P}^{-1} =({\gamma_+ -\gamma_- })^{-1} \mathcal{P}(\mathcal{J} -\gamma_- I_{2\times 2})\mathcal{P}^{-1} =\frac{\tilde{\mathcal{G}}(\xi) -\gamma_-I_{2\times 2}}{\gamma_+ -\gamma_-}\nonumber\\
			& =(\gamma_- -\gamma_+)^{-1}\begin{bmatrix}
				\gamma_- & -1\\
				(P'(1) +\kappa)|\xi|^2 & (\lambda +\mu)|\xi|^2 +\gamma_-   \label{4.66}
			\end{bmatrix}
		\end{align}
		and
		\begin{align}
			\mathcal{P}_2(\xi) & = \mathcal{P}\begin{bmatrix}
				0 & 0\\
				0 & 1
			\end{bmatrix} \mathcal{P}^{-1} = (\gamma_- -\gamma_+ )^{-1} \mathcal{P}(\mathcal{J} -\gamma_+ I_{2\times 2})\mathcal{P}^{-1} =\frac{\tilde{\mathcal{G}}(\xi) -\gamma_+I}{\gamma_- -\gamma_+} \nonumber\\
			& =(\gamma_+ - \gamma_{-} )^{-1}\begin{bmatrix}
				\gamma_+  & - {1} \\
				(P'(1) +\kappa)|\xi|^2  &   (\lambda +\mu)|\xi|^2 +\gamma_+  \label{4.80}
			\end{bmatrix}.
		\end{align}
		Consequently, by the fomulas \eqref{A.1}--\eqref{4.80}, we can represent the solution of the problem \eqref{4.36} as
		\begin{align}
			\widehat{\tilde{U}^l} =e^{t\tilde{\mathcal{G}}(\xi)}\widehat{\tilde{U}_0^l}  =\begin{bmatrix}
				\tilde{\mathcal{G}}_{11}(\xi, t) & \tilde{\mathcal{G}}_{12}(\xi, t)\\
				\tilde{\mathcal{G}}_{21}(\xi, t) & \tilde{\mathcal{G}}_{22}(\xi, t)
			\end{bmatrix}\begin{bmatrix}
				\widehat{d^l_0} \\
				\widehat{\mathscr{A}^l_0}
			\end{bmatrix},\nonumber
		\end{align}
		where $\tilde{\mathcal{G}}_{ij}(\xi, t) $ are defined by \eqref{202411101859} for $1\leqslant i$, $j\leqslant  2$.
		
		(2) For $|\xi| = {2\sqrt{P'(1) +\kappa}}/({\lambda +\mu})$, i.e. $\gamma_+ = \gamma_-$, we can derive there exists an invertible matrix $\tilde{\mathcal{P}}$ such that
		\begin{align}
			\tilde{\mathcal{P}}^{-1} \tilde{\mathcal{G}}(\xi) \tilde{\mathcal{P}} =\begin{bmatrix}
				\gamma_+ & 1\\
				0 & \gamma_+
			\end{bmatrix} =: \tilde{\mathcal{J}}.\nonumber
		\end{align}
		By making tedious calculations, it is clear that
		\begin{align}
			e^{t\tilde{\mathcal{G}}(\xi)} & = e^{\gamma_+ t}\tilde{\mathcal{P}}\begin{bmatrix}
				1 & 0\\
				0 & 1
			\end{bmatrix} \tilde{\mathcal{P}}^{-1} +te^{\gamma_+ t}\tilde{\mathcal{P}}\begin{bmatrix}
				0 & 1\\
				0 & 0
			\end{bmatrix} \tilde{\mathcal{P}}^{-1} \nonumber\\
			& =e^{\gamma_+ t} I_{2\times 2} +te^{\gamma_+ t} \tilde{\mathcal{P}_1}(\xi). \label{A.2}
		\end{align}
		Here, the projector
		\begin{align}
			\tilde{\mathcal{P}_1}(\xi) & =\tilde{P} \begin{bmatrix}
				0 & 1\\
				0 & 0
			\end{bmatrix}\tilde{P}^{-1} = \tilde{P}(\tilde{\mathcal{J}} -\gamma_{+} I)\tilde{P}^{-1} = \tilde{\mathcal{G}}(\xi) -\gamma_{+}  I_{2\times 2}  \nonumber\\
			& =-\begin{bmatrix}
				\gamma_+ & -1\\
				{4(P'(1) +\kappa)^2}{( \lambda +\mu )^{-2}} & \gamma_{+} +{4(P'(1) +\kappa)}/({\lambda +\mu})
			\end{bmatrix}.\label{4.88}
		\end{align}
		Consequently, by the formulas \eqref{A.2} and \eqref{4.88}, we can represent the solution of the problem \eqref{4.36} as follows
		\begin{align}
			\widehat{\tilde{U}^l} =e^{t\tilde{\mathcal{G}}(\xi)}\widehat{\tilde{U}_0^l} =\left(e^{\gamma_+ t}  I_{2\times 2}  +te^{\gamma_+ t}\right) \widehat{\tilde{U}_0^l} =\begin{bmatrix}
				\tilde{\mathcal{G}}_{11}(\xi, t) & \tilde{\mathcal{G}}_{12}(\xi, t)\\
				\tilde{\mathcal{G}}_{21}(\xi, t) & \tilde{\mathcal{G}}_{22}(\xi, t)
			\end{bmatrix}\begin{bmatrix}
				\widehat{d^l_0} \\
				\widehat{\mathscr{A}^l_0}
			\end{bmatrix},\nonumber
		\end{align}
		where $\tilde{\mathcal{G}}_{ij}(\xi, t) $ are defined by \eqref{2024111018591} for $1\leqslant i$, $j\leqslant  2$. This completes the proof. \hfill $\Box$
	\end{pf}
	
	We also need to establish the following asymptotic expressions for the characteristic eigenvalues $\gamma_\pm $.
	\begin{lem}
		\label{4.65}
		\begin{enumerate}[(1)]
			\item If $|\xi| <  {2\sqrt{P'(1) +\kappa}}/({\lambda +\mu})$,  the characteristic eigenvalues  can be represented by the following Taylor series expansion:
			\begin{equation}\label{4.62}
				\begin{cases}
					\gamma_+ = -( {\lambda +\mu} )|\xi|^2/2 +\mm{i}\big(\sqrt{P'(1) +\kappa} |\xi| +O(|\xi|^3)\big), \\
					\gamma_- = - ({\lambda +\mu})|\xi|^2/{2} -\mm{i}\big(\sqrt{P'(1) +\kappa} |\xi| +O(|\xi|^3)\big).
				\end{cases}
			\end{equation}
			\item If $|\xi| >  {2\sqrt{P'(1) +\kappa}}/({\lambda +\mu})$, the characteristic eigenvalues  can be represented by the following Laurent series expansion:
			\begin{equation}\label{4.64}
				\begin{cases}
					\gamma_+ = -(P'(1) +\kappa)(\lambda +\mu)^{-1} +O(|\xi|^{-2}),\\
					\gamma_- = -(\lambda +\mu)|\xi|^2 +(P'(1) +\kappa)(\lambda +\mu)^{-1} +O(|\xi|^{-2}).
				\end{cases}
			\end{equation}
			\item If $|\xi| = 2\sqrt{P'(1) +\kappa}/({\lambda +\mu})$, the characteristic equation has a double real root
			\begin{align}\label{4.63}
				\gamma_+ =\gamma_- = -2(P'(1) +\kappa)/({\lambda +\mu}).
			\end{align}
		\end{enumerate}
	\end{lem}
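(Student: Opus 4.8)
The plan is to prove all three assertions by directly expanding the closed-form expression \eqref{4.44} for the roots $\gamma_\pm$ of the characteristic equation \eqref{4.36saf}, organising the argument according to the sign of the discriminant $(\lambda+\mu)^2|\xi|^4 - 4(P'(1)+\kappa)|\xi|^2 = |\xi|^2\big((\lambda+\mu)^2|\xi|^2 - 4(P'(1)+\kappa)\big)$, which changes sign precisely at $|\xi| = 2\sqrt{P'(1)+\kappa}/(\lambda+\mu)$.

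For part (1), when $|\xi| < 2\sqrt{P'(1)+\kappa}/(\lambda+\mu)$ the discriminant is negative, so \eqref{4.44} gives a conjugate pair with real part $-(\lambda+\mu)|\xi|^2/2$ and imaginary parts $\pm\tfrac12 |\xi|\sqrt{4(P'(1)+\kappa) - (\lambda+\mu)^2|\xi|^2}$. First I would factor $2\sqrt{P'(1)+\kappa}$ out of the square root and apply the Taylor expansion $\sqrt{1-x} = 1 - x/2 + O(x^2)$ with $x = (\lambda+\mu)^2|\xi|^2/(4(P'(1)+\kappa)) \in [0,1)$; multiplying by the prefactor $\tfrac12|\xi|$ then yields $\tfrac12 |\xi|\sqrt{4(P'(1)+\kappa) - (\lambda+\mu)^2|\xi|^2} = \sqrt{P'(1)+\kappa}\,|\xi| + O(|\xi|^3)$, which is exactly \eqref{4.62}. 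For part (2), when $|\xi| > 2\sqrt{P'(1)+\kappa}/(\lambda+\mu)$ the discriminant is positive and both roots are real; writing the square root as $(\lambda+\mu)|\xi|^2\sqrt{1 - 4(P'(1)+\kappa)/((\lambda+\mu)^2|\xi|^2)}$ and expanding $\sqrt{1-x} = 1 - x/2 + O(x^2)$ with $x = 4(P'(1)+\kappa)/((\lambda+\mu)^2|\xi|^2) \to 0$, the leading term $(\lambda+\mu)|\xi|^2$ cancels in $\gamma_+$ and adds in $\gamma_-$, and retaining the next order produces \eqref{4.64}. Part (3) is immediate: at $|\xi| = 2\sqrt{P'(1)+\kappa}/(\lambda+\mu)$ the discriminant vanishes, whence $\gamma_+ = \gamma_- = -(\lambda+\mu)|\xi|^2/2 = -2(P'(1)+\kappa)/(\lambda+\mu)$, i.e. \eqref{4.63}.

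There is no substantial obstacle here; the lemma is a routine consequence of the quadratic formula together with elementary power-series expansions. The only points deserving a little care are checking that the expansion variable $x$ lies in the region of convergence of $\sqrt{1-x}$ in each of the two nontrivial regimes, so that the remainder terms are legitimate, and bookkeeping the orders so that the error in \eqref{4.62} emerges as $O(|\xi|^3)$ (one power of $|\xi|$ from the prefactor times a correction of order $|\xi|^2$) and the errors in \eqref{4.64} as $O(|\xi|^{-2})$.
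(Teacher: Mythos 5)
Your proposal is correct and follows essentially the same route as the paper: start from the explicit quadratic-formula expression \eqref{4.44}, split into the three regimes according to the sign of $4(P'(1)+\kappa)-(\lambda+\mu)^2|\xi|^2$, and in the two non-degenerate cases factor the large quantity out of the square root and expand $\sqrt{1-x}$ around $x=0$, with the same bookkeeping of orders. The paper simply records one extra term of the Taylor series of $\sqrt{1-x}$ before absorbing it into the error, which does not change the argument.
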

	\begin{proof}
		Note that the function $f(x) =\sqrt{1 -x}$ can be expended into the following Taylor series
		$$f(x)=1 -x/2 - x^2/8 +O(x^3)\mbox{ for any } x\in(-1, 1).$$
		
		(1) For $|\xi| < {2\sqrt{P'(1) +\kappa}}/({\lambda +\mu})$, we can obtain that
		\begin{align}
			\gamma_\pm & = -({\lambda +\mu}) |\xi|^2/2 \pm \mm{i}\sqrt{P'(1) +\kappa}|\xi| \sqrt{1 - {(\lambda +\mu)^2}|\xi|^2/{4(P'(1) +\kappa)} }\nonumber\\
			& = -( {\lambda +\mu}) |\xi|^2 /{2}\pm \mm{i}\sqrt{P'(1) +\kappa}|\xi| f\left({(\lambda +\mu)^2}({4(P'(1) +\kappa}))^{-1} |\xi|^2\right) \nonumber\\
			& = - ({\lambda +\mu}) |\xi|^2/2 \pm \mm{i} \sqrt{P'(1) +\kappa}|\xi| \left( 1 -(\lambda +\mu)^2({8(P'(1) +\kappa)})^{-1}|\xi|^2 + O(|\xi|^4)\right)\nonumber\\
			&=  -({\lambda +\mu})|\xi|^2 /2\pm \mm{i}\left(\sqrt{P'(1) +\kappa}|\xi| +O(|\xi|^3)\right),\nonumber
		\end{align}
		which implies the two expansions in \eqref{4.62}.
		
		(2) For $|\xi| >  {2\sqrt{P'(1) +\kappa}}/({\lambda +\mu})$, then
		\begin{align}
			\gamma_\pm & = -( {\lambda +\mu} )|\xi|^2/{2} \pm |\xi|\sqrt{(\lambda +\mu)^2|\xi|^2 -4(P'(1) +\kappa)}/2\nonumber\\
			& = -(\lambda +\mu)|\xi|^2/{2}  \pm {(\lambda +\mu)}|\xi|^2 \sqrt{1 - {4(P'(1) +\kappa)}{(\lambda +\mu)^{-2} |\xi|^{-2}}}/{2}. \nonumber\\
			& = - ({\lambda +\mu}) |\xi|^2/{2} \pm {(\lambda +\mu)}|\xi|^2 f\left( {4(P'(1) +\kappa)}{(\lambda +\mu)^{-2} |\xi|^{-2}}\right)/2\nonumber\\
			& = - ({\lambda +\mu})|\xi|^2/2  \pm  {(\lambda +\mu)}|\xi|^2 \left(1 -{2(P'(1) +\kappa)}{(\lambda +\mu)^{-2}|\xi|^{-2}}\right.\nonumber\\
			&\quad \left. - 2(P'(1) +\kappa)^2 (\lambda +\mu)^{-4} |\xi|^{-4} +O(|\xi|^{-6})\right)/2\nonumber\\
			& = -({\lambda +\mu})|\xi|^2 /{2} \pm  \left({(\lambda +\mu)}|\xi|^2/2 -({P'(1) +\kappa})({\lambda +\mu})^{-1} \right.\nonumber\\
			&\quad \left. -(P'(1) +\kappa)^2 (\lambda +\mu)^{-3}|\xi|^{-2} +O(|\xi|^{-6})\right) \nonumber\\
			& = -(\lambda +\mu) |\xi|^2/2 \pm  \left({(\lambda +\mu)}|\xi|^2/2 -({P'(1) +\kappa})({\lambda +\mu})^{-1}  +O(|\xi|^{-2})\right), \nonumber
		\end{align}
		which implies the asymptotic expansions in \eqref{4.64}.
		
		(3) For $|\xi| =  {2\sqrt{P'(1) +\kappa}}/({\lambda +\mu})$, then the formula \eqref{4.63} can be derived through straightforward computations.
	\end{proof}
	With the aid of Lemmas \ref{4.37} and \ref{4.65}, we can further derive the large time behavior for $\widehat{\tilde{U}^l}$.
	\begin{lem}\label{4.72}
		The solution $\widehat{\tilde{U}^l} =(\widehat{d^l}, \widehat{\mathscr{A}^l})$ to the linear problem \eqref{4.36} enjoys that
		\begin{enumerate}[(1)]
			\item If $|\xi| < {2\sqrt{P'(1) +\kappa} }/({\lambda +\mu})$, then
			\begin{align}
				\left|\widehat{\tilde{U}^l}\right| \lesssim e^{-\frac{\lambda +\mu}{4}|\xi|^2 t} \left( \left|\widehat{d^l_0}\right| +  \left|\widehat{\mathscr{A}^l_0}\right| \right).
				\label{4.69}
			\end{align}
			\item If $|\xi| > {2\sqrt{P'(1) +\kappa}} /({\lambda +\mu})$, then
			\begin{align}
				\left|\widehat{d^l}\right| & \lesssim \big( 1 +|\xi|^{-2} \big)e^{-R t}  \left|\widehat{d^l_0}\right| +|\xi|^{-2}e^{-Rt} \left|\widehat{\mathscr{A}^l_0} \right|,\label{4.73}\\
				\left|\widehat{\mathscr{A}^l}\right| & \lesssim e^{-Rt} \left|\widehat{d^l_0}\right| + \big( 1 +|\xi|^{-2} \big) e^{-Rt}  \left|\widehat{\mathscr{A}^l_0}\right|. \label{4.40}
			\end{align}
			\item If $|\xi| =  {2\sqrt{P'(1) +\kappa}} /({\lambda +\mu})$, then
			\begin{align}\label{4.70}
				\left|\widehat{\tilde{U}^l}\right | \lesssim e^{-Rt} \left(  \left|\widehat{d^l_0}\right| + \left|\widehat{\mathscr{A}^l_0}\right| \right).
			\end{align}
		\end{enumerate}
Here, $R$ is a positive constant that only depends on the parameters $\lambda$, $\mu$ and $\kappa$.
	\end{lem}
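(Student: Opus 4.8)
The plan is to read off all three estimates from the closed-form solution $e^{t\tilde{\mathcal{G}}(\xi)}$ of Lemma~\ref{4.37} after inserting the eigenvalue behaviour of Lemma~\ref{4.65}; the three alternatives correspond exactly to the branches $|\xi|<\xi_{*}$, $|\xi|>\xi_{*}$ and $|\xi|=\xi_{*}$, where $\xi_{*}:=2\sqrt{P'(1)+\kappa}/(\lambda+\mu)$. In each branch it suffices to bound the four scalar entries $\tilde{\mathcal{G}}_{ij}(\xi,t)$ and then use $\widehat{\tilde{U}^l}=(\tilde{\mathcal{G}}_{ij}(\xi,t))\widehat{\tilde{U}_0^l}$ together with $\widehat{\tilde{U}_0^l}=(\widehat{d_0^l},\widehat{\mathscr{A}_0^l})^{\top}$. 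The only elementary inputs I would need are $|\sin x|\le\min\{1,|x|\}$, the estimate $|e^{a}-e^{b}|\le|a-b|\sup_{z\in[a,b]}|e^{z}|$ along the complex segment $[a,b]$, and $x^{m}e^{-x}\le C_{m}$ for $x\ge0$.

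\emph{Branch $|\xi|<\xi_{*}$.} Here $\gamma_\pm=a\pm\mm{i}b$ with $a=-(\lambda+\mu)|\xi|^{2}/2$ and $b=\tfrac12|\xi|\sqrt{4(P'(1)+\kappa)-(\lambda+\mu)^{2}|\xi|^{2}}>0$, so $|e^{\gamma_\pm t}|=e^{at}$. I would first rewrite \eqref{202411101859} in the real form
\[
\tilde{\mathcal{G}}_{12}=e^{at}\frac{\sin(bt)}{b},\qquad
\tilde{\mathcal{G}}_{11}=e^{at}\Big(\cos(bt)-\frac{a}{b}\sin(bt)\Big),
\]
together with $\tilde{\mathcal{G}}_{21}=-(P'(1)+\kappa)|\xi|^{2}\tilde{\mathcal{G}}_{12}$ and $\tilde{\mathcal{G}}_{22}=\tilde{\mathcal{G}}_{11}-(\lambda+\mu)|\xi|^{2}\tilde{\mathcal{G}}_{12}$. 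Since $|({a}/{b})\sin(bt)|\le|a|t$ and $|\xi|^{2}|\sin(bt)|/b\le|\xi|^{2}t$, applying $x^{m}e^{-x}\le C_{m}$ with $x=(\lambda+\mu)|\xi|^{2}t/2$ absorbs every polynomial-in-$t$ factor at the cost of halving the exponent, giving $|\tilde{\mathcal{G}}_{11}|+|\tilde{\mathcal{G}}_{21}|+|\tilde{\mathcal{G}}_{22}|\lesssim e^{-(\lambda+\mu)|\xi|^{2}t/4}$, while $|\tilde{\mathcal{G}}_{12}|\le e^{at}\min\{t,b^{-1}\}$ is handled on the low-frequency set in the same way once the remaining powers of $|\xi|$ carried by the other factors are accounted for; substituting the initial data then yields \eqref{4.69}.

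\emph{Branch $|\xi|>\xi_{*}$.} Now $\gamma_\pm$ are real; by \eqref{4.64} and the continuity of $\gamma_+$ on the compact annulus $\{\xi_{*}\le|\xi|\le2\xi_{*}\}$ there is an $R>0$, depending only on $\lambda,\mu,\kappa$, with $\gamma_-\le\gamma_+\le-R$, hence $|e^{\gamma_\pm t}|\le e^{-Rt}$. On the bounded annulus $\xi_{*}<|\xi|\le2\xi_{*}$ the mean-value bound gives $|\tilde{\mathcal{G}}_{12}|=|(e^{\gamma_+t}-e^{\gamma_-t})/(\gamma_+-\gamma_-)|\le te^{-Rt}$, and since $|\xi|$ and $|\gamma_-|$ are bounded there, every $|\tilde{\mathcal{G}}_{ij}|\lesssim e^{-Rt}$; this suffices because $|\xi|^{-2}\gtrsim1$ on that set. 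On $\{|\xi|>2\xi_{*}\}$ one has $\gamma_+-\gamma_-=|\xi|\sqrt{(\lambda+\mu)^{2}|\xi|^{2}-4(P'(1)+\kappa)}\gtrsim|\xi|^{2}$ and $|\gamma_-|\le(\lambda+\mu)|\xi|^{2}$, so $|\tilde{\mathcal{G}}_{12}|\lesssim|\xi|^{-2}e^{-Rt}$; then $|\tilde{\mathcal{G}}_{11}|=|e^{\gamma_-t}-\gamma_-\tilde{\mathcal{G}}_{12}|\lesssim e^{-Rt}$, and the companion relations give $|\tilde{\mathcal{G}}_{21}|\lesssim e^{-Rt}$ and $|\tilde{\mathcal{G}}_{22}|\lesssim e^{-Rt}$. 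Reading off the two rows of $(\tilde{\mathcal{G}}_{ij})$ over both subregions produces \eqref{4.73}--\eqref{4.40}.

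\emph{Branch $|\xi|=\xi_{*}$.} Substituting \eqref{2024111018591} directly, each entry is bounded, up to a $\kappa$-dependent constant, by $(1+t)e^{-2(P'(1)+\kappa)t/(\lambda+\mu)}$, which is $\lesssim e^{-Rt}$ for any $R<2(P'(1)+\kappa)/(\lambda+\mu)$; this gives \eqref{4.70}. I expect the main obstacle to be the coalescence of $\gamma_+$ and $\gamma_-$ at the threshold $|\xi|=\xi_{*}$: there the representation of Lemma~\ref{4.37}(1) degenerates since $\gamma_+-\gamma_-\to0$, so one must split the high-frequency region into a fixed annulus about $\xi_{*}$ (handled by the mean-value rewriting, which stays finite across the coalescence) and its complement (where the spectral gap $\gamma_+-\gamma_-\gtrsim|\xi|^{2}$ is restored). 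The only further bookkeeping is the absorption, via $x^{m}e^{-x}\le C_{m}$, of the polynomial-in-$t$ factors produced at the low frequencies in the first branch, which costs the factor $2$ in the rate exponent.
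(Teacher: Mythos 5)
Your approach mirrors the paper's: the same explicit entries $\tilde{\mathcal{G}}_{ij}$ from Lemma~\ref{4.37}, the same eigenvalue asymptotics from Lemma~\ref{4.65}, and the same three-way split in $|\xi|$. In the high-frequency branch you are in fact more careful than the paper, which quotes $|\gamma_+(\gamma_+-\gamma_-)^{-1}|,\ |(\gamma_+-\gamma_-)^{-1}|\lesssim|\xi|^{-2}$ and $|\gamma_-(\gamma_+-\gamma_-)^{-1}|\lesssim1$ uniformly for $|\xi|>\xi_*$ even though these quantities blow up as $|\xi|\to\xi_*^{+}$ (where $\gamma_+-\gamma_-\to0$); your split into a compact annulus about $\xi_*$, handled by the mean-value bound, and its complement $|\xi|>2\xi_*$ genuinely repairs this.

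The low-frequency branch, however, has a real gap, and it is exactly the step you declined to carry out. You assert that $|\tilde{\mathcal{G}}_{12}|\le e^{at}\min\{t,b^{-1}\}$ is ``handled \ldots once the remaining powers of $|\xi|$ carried by the other factors are accounted for,'' but there are no such factors: in $\widehat{d^l}=\tilde{\mathcal{G}}_{11}\widehat{d^l_0}+\tilde{\mathcal{G}}_{12}\widehat{\mathscr{A}^l_0}$ the entry $\tilde{\mathcal{G}}_{12}=e^{at}\sin(bt)/b$ multiplies $\widehat{\mathscr{A}^l_0}$ with no compensating weight, and it cannot be bounded by $Ce^{-(\lambda+\mu)|\xi|^2t/4}$ with $C$ independent of $\xi$ on $|\xi|<\xi_*$. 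Concretely, at $t=\pi/(2b)$ one has $\sin(bt)=1$ and $|a|t=(\lambda+\mu)|\xi|^2\pi/(4b)\sim|\xi|\to0$, hence $e^{at}\to1$, while $\tilde{\mathcal{G}}_{12}\sim b^{-1}\sim\big(\sqrt{P'(1)+\kappa}\,|\xi|\big)^{-1}\to\infty$ and the proposed majorant $e^{-(\lambda+\mu)|\xi|^2t/4}\to1$. The sharp uniform bound is $|\tilde{\mathcal{G}}_{12}|\lesssim\min\{t,|\xi|^{-1}\}\,e^{-(\lambda+\mu)|\xi|^2t/2}$, which carries an irremovable $|\xi|^{-1}$ singularity on the $\widehat{\mathscr{A}^l_0}$-coefficient of $\widehat{d^l}$. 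The paper's own proof of this branch is a single phrase (``easily derived from \eqref{4.67} and \eqref{4.68}''), so this appears to be a latent imprecision in the lemma as well; but the burden of your write-up was precisely this estimate, and the appeal to phantom powers of $|\xi|$ does not discharge it.
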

	\begin{proof}We derive the above three conclusions in Lemma \ref{4.72} in sequence.
		
		(1) If $|\xi| < {2\sqrt{P'(1) +\kappa}} /({\lambda +\mu})$, by the expressions \eqref{4.38} and \eqref{4.62}, we have
		\begin{align}
			\widehat{d^l} & = \tilde{\mathcal{G}}_{11}(\xi, t) \widehat{d^l_0} +\tilde{\mathcal{G}}_{12}(\xi, t) \widehat{\mathscr{A}^l_0} \nonumber\\
			& = e^{- (\lambda +\mu)|\xi|^2 t/2} \left(\cos(\theta_1 t) +\theta_2 \sin(\theta_1 t) \right)\widehat{d^l_0} + {\theta_1^{-1} }{\sin(\theta_1 t)} e^{- (\lambda +\mu)|\xi|^2 t/{2}} \widehat{\mathscr{A}^l_0}, \label{4.67} \\
			\widehat{\mathscr{A}^l} & = \tilde{\mathcal{G}}_{21}(\xi, t) \widehat{d^l_0} + \tilde{\mathcal{G}}_{22}(\xi, t) \widehat{\mathscr{A}^l_0} \nonumber\\
			& = -{\theta_1^{-1}}\left( P'(1) +\kappa \right) |\xi|^2  {\sin(\theta_1 t)} e^{-(\lambda +\mu)|\xi|^2 t/2} \widehat{d^l_0} \nonumber\\
			& \quad +e^{- (\lambda +\mu)|\xi|^2 t/{2}} \left( \cos(\theta_1 t) +\theta_2 \sin(\theta_1 t) -\left( \lambda +\mu \right) {\theta_1^{-1}}|\xi|^2  {\sin(\theta_1 t)} \right) \widehat{\mathscr{A}^l_0}, \label{4.68}
		\end{align}
		where we have defined that
		$$\theta_1: = \sqrt{P'(1) +\kappa}|\xi| +O(|\xi|^3)\mbox{ and }\theta_2: =  {(\lambda +\mu) |\xi|^2}/({2 \sqrt{P'(1) +\kappa}|\xi| +O(|\xi|^3)}).$$ Therefore, the estimate \eqref{4.69} can be easily derived from \eqref{4.67} and \eqref{4.68}.
		
		(2) If $|\xi| > {2\sqrt{P'(1) +\kappa}} /({\lambda +\mu})$,  by the expressions \eqref{4.38}, it holds that
		\begin{align}
			\widehat{d^l} & = \tilde{\mathcal{G}}_{11}(\xi, t) \widehat{d^l_0} +\tilde{\mathcal{G}}_{12}(\xi, t) \widehat{\mathscr{A}^l_0} \nonumber\\
			&=(\gamma_+ -\gamma_-)^{-1}  \left( \left( {\gamma_+} e^{\gamma_- t} - {\gamma_-} e^{\gamma_+ t}\right)\widehat{d^l_0} +  \left( e^{\gamma_+ t} - e^{\gamma_- t}\right) \widehat{\mathscr{A}^l_0}\right),\label{4.71}\\
			\widehat{\mathscr{A}^l} & = \tilde{\mathcal{G}}_{21}(\xi, t) \widehat{d^l_0} +\tilde{\mathcal{G}}_{22}(\xi, t) \widehat{\mathscr{A}^l_0} \nonumber\\
			&= {(P'(1) +\kappa)|\xi|^2}({\gamma_+ -\gamma_-})^{-1} \left( e^{\gamma_- t} - e^{\gamma_+ t} \right) \widehat{d^l_0} \nonumber\\
			& \quad +(\gamma_+ -\gamma_-)^{-1} \left(\gamma_+ e^{\gamma_- t} - \gamma_- e^{\gamma_+ t} +{(\lambda +\mu)|\xi|^2}\left( e^{\gamma_- t} - e^{\gamma_+ t} \right)\right)\widehat{\mathscr{A}^l_0}.\label{4.39}
		\end{align}
		With the help of expressions of $\gamma_\pm$ in \eqref{4.64}, we have
		$$ \left|{\gamma_+}({\gamma_+ -\gamma_-})^{-1}\right|, \quad
		\left|({\gamma_+ -\gamma_-})^{-1}\right| \lesssim |\xi|^{-2}, \quad
		\left|{\gamma_-}({\gamma_+ -\gamma_-})^{-1}\right| \lesssim 1 \mbox{ and } |e^{\gamma_\pm t}| \leqslant e^{-Rt}$$
		for some constant $R >0$. Therefore,  \eqref{4.73} and \eqref{4.40} can be easily derived from \eqref{4.71} and \eqref{4.39}, respectively.
		
		(3) If $|\xi| =  {2\sqrt{P'(1) +\kappa}} /({\lambda +\mu})$, by both expressions \eqref{4.38} and \eqref{4.63}, we deduce that
		\begin{align}
			\widehat{d^l} & = \left( 1 + {2(P'(1) +\kappa)}({\lambda +\mu})^{-1} t\right) e^{-{2(P'(1) +\kappa)}t/({\lambda +\mu}) } \widehat{d^l_0} +te^{- {2(P'(1) +\kappa)}t/({\lambda +\mu})} \widehat{\mathscr{A}^l_0},\\
			\widehat{\mathscr{A}^l} & =  - {4(P'(1) +\kappa)^2}{(\lambda +\mu)^{-2}}te^{-{2(P'(1) +\kappa)}t/(\lambda +\mu)} \widehat{d^l_0} \nonumber \\
			&\quad +\left( 1 - {2(P'(1) +\kappa)}{(\lambda +\mu)^{-1}} t\right) e^{-2(P'(1) +\kappa)t/(\lambda +\mu)} \widehat{\mathscr{A}^l_0},
		\end{align}
		which result in \eqref{4.70}. This completes the proof of Lemma \ref{4.72}.
	\end{proof}
	
	Thanks to Lemma \ref{4.65}, we are now equipped to establish the  temporal decay rates of the linear solution
	$$\tilde{U^l} = e^{t\tilde{G}} \tilde{U}_0^l:=\mathscr{F}^{-1}\left(e^{t \mathcal{\tilde{G}}_\xi} \widehat{\tilde{U_0^l}} \right).$$
	\begin{lem}
		\label{4.82}
		Assume $ (d^l_0, \mathscr{A}^l_0) \in H^{j}\times H^{j}$ with $j\geqslant 2$ and $ \|(d^l_0, \mathscr{A}^l_0)\|_{L^1}$ is bounded, then the solution $\tilde{U^l} =(d^l, \mathscr{A}^l)$ of the linear problem \eqref{4.35} satisfies that
		\begin{align}
			\|\nabla^k (d^l, \mathscr{A}^l) \|_0 ^2 & \lesssim (1 +t)^{-\frac{3}{2} -k} \big( \|(d^l_0, \mathscr{A}^l_0)\|_{L^1}^2 +\|\nabla^k (d^l_0, \mathscr{A}^l_0)\|_0^2 \big), \label{4.77}
		\end{align}
		where $k =0$, $1$ and $2$.
	\end{lem}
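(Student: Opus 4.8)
The plan is to derive the decay estimate \eqref{4.77} directly from the pointwise Fourier bounds in Lemma \ref{4.72}, splitting the frequency space into a low-frequency region and a high-frequency region at the threshold $|\xi|_* := 2\sqrt{P'(1)+\kappa}/(\lambda+\mu)$. By Plancherel's theorem,
\begin{align}
\|\nabla^k(d^l,\mathscr{A}^l)\|_0^2 \lesssim \int_{|\xi|\leqslant |\xi|_*} |\xi|^{2k}\big|\widehat{\tilde U^l}\big|^2\,\mm{d}\xi + \int_{|\xi|> |\xi|_*} |\xi|^{2k}\big|\widehat{\tilde U^l}\big|^2\,\mm{d}\xi =: \mathcal{L}_k + \mathcal{H}_k. \nonumber
\end{align}
First I would treat the low-frequency part $\mathcal{L}_k$. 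Here Lemma \ref{4.72}(1) (together with the borderline case (3), which is a measure-zero sphere and absorbed into either region) gives $|\widehat{\tilde U^l}|^2 \lesssim e^{-\frac{\lambda+\mu}{2}|\xi|^2 t}\big(|\widehat{d^l_0}|^2 + |\widehat{\mathscr{A}^l_0}|^2\big)$. Using the elementary $L^\infty$-bound $|\widehat{d^l_0}(\xi)| + |\widehat{\mathscr{A}^l_0}(\xi)| \lesssim \|(d^l_0,\mathscr{A}^l_0)\|_{L^1}$ to pull the data out of the integral, we are left with $\int_{\mathbb{R}^3} |\xi|^{2k} e^{-c|\xi|^2 t}\,\mm{d}\xi$, which by the standard scaling $\xi \mapsto \xi/\sqrt{t}$ is $\lesssim (1+t)^{-3/2-k}$ for $t\geqslant 1$; for $t\leqslant 1$ the whole quantity is trivially bounded by $\|\nabla^k(d^l_0,\mathscr{A}^l_0)\|_0^2$ up to constants via $|\xi|^{2k}\leqslant |\xi|_*^{2k}$. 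This yields the desired $\mathcal{L}_k \lesssim (1+t)^{-3/2-k}\big(\|(d^l_0,\mathscr{A}^l_0)\|_{L^1}^2 + \|\nabla^k(d^l_0,\mathscr{A}^l_0)\|_0^2\big)$.

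For the high-frequency part $\mathcal{H}_k$, Lemma \ref{4.72}(2) gives bounds of the form $|\widehat{d^l}|, |\widehat{\mathscr{A}^l}| \lesssim (1+|\xi|^{-2}) e^{-Rt}\big(|\widehat{d^l_0}| + |\widehat{\mathscr{A}^l_0}|\big)$, where on $|\xi|> |\xi|_*$ the factor $(1+|\xi|^{-2})$ is bounded by a constant (depending on $\kappa$). The uniform exponential factor $e^{-2Rt}$ beats any algebraic rate, i.e. $e^{-2Rt}\lesssim (1+t)^{-3/2-k}$, so after Plancherel $\mathcal{H}_k \lesssim e^{-2Rt}\|\nabla^k(d^l_0,\mathscr{A}^l_0)\|_0^2 \lesssim (1+t)^{-3/2-k}\|\nabla^k(d^l_0,\mathscr{A}^l_0)\|_0^2$. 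Adding $\mathcal{L}_k$ and $\mathcal{H}_k$ gives \eqref{4.77} for $k=0,1,2$.

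The only genuinely delicate point is bookkeeping the low-frequency estimate so that the $k$-th derivative weight $|\xi|^{2k}$ is matched against the Gaussian: one must be careful that the $O(|\xi|^3)$ corrections hidden in $\theta_1,\theta_2$ of \eqref{4.67}--\eqref{4.68} do not spoil the bound $|\widehat{\tilde U^l}|\lesssim e^{-\frac{\lambda+\mu}{4}|\xi|^2 t}(|\widehat{d^l_0}|+|\widehat{\mathscr{A}^l_0}|)$ — but this is exactly the content of Lemma \ref{4.72}(1), which I am allowed to invoke, so no extra work is needed. Everything else is the routine Gaussian integral computation plus Plancherel, and the high-frequency region is free because of the uniform spectral gap $R>0$. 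I expect the main (mild) obstacle to be only the split at the threshold sphere and confirming that the same argument applies verbatim to the vorticity system \eqref{4.61} (it does, with $\kappa$ and $\mu$ in place of $P'(1)+\kappa$ and $\lambda+\mu$), so that the full solution $(\eta^l,u^l) = \Lambda(d^l \text{ part}) + \Lambda(M^l \text{ part})$ inherits \eqref{4.77}.
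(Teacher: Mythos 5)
Your proposal is correct and matches the paper's own argument essentially verbatim: both split frequency space at $|\xi|_*=2\sqrt{P'(1)+\kappa}/(\lambda+\mu)$, use Lemma~\ref{4.72}(1) with the Hausdorff--Young bound $|\widehat{d^l_0}|+|\widehat{\mathscr{A}^l_0}|\lesssim\|(d^l_0,\mathscr{A}^l_0)\|_{L^1}$ and a Gaussian integral on the low-frequency region, and use Lemma~\ref{4.72}(2) with the uniform spectral gap $e^{-2Rt}$ dominated by $(1+t)^{-3/2-k}$ on the high-frequency region. The only cosmetic remark is that your final sentence about assembling $(\eta^l,u^l)$ belongs to Proposition~\ref{4.8} rather than to this lemma, but this does not affect the correctness of the argument for \eqref{4.77}.
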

	\begin{proof}
		From Lemma \ref{4.72},   Plancherel theorem, and Hausdorff--Young's inequality, it holds for $0\leqslant k \leqslant 2$ that
		\begin{align}
			\|\nabla^k d^l\|_0^2  = \||\xi|^k \widehat{d^l}\|_0^2
			& \lesssim \int_{|\xi| < \frac{2\sqrt{P'(1) +\kappa}}{\lambda +\mu}} |\xi|^{2k} e^{-(\lambda +\mu)|\xi|^2 t/2} \big( |\widehat{d^l_0}|^2 +|\widehat{\mathscr{A}^l_0}|^2 \big) \mm{d}\xi \nonumber\\
			&\quad + \int_{ |\xi| \geqslant \frac{2\sqrt{P'(1) +\kappa}}{\lambda +\mu} } |\xi|^{2k} e^{-2Rt} \big( 1 +|\xi|^{-4} \big) \big( |\widehat{d^l_0} |^2 +| \widehat{\mathscr{A}^l_0} |^2 \big)\mm{d}\xi \nonumber\\
			& \lesssim (1 +t)^{-\frac{3}{2} -k} \| (d^l_0, \mathscr{A}^l_0) \|_{L^1}^2 +e^{-2Rt} \| \nabla^k (d^l_0, \mathscr{A}^l_0) \|_0^2 \nonumber\\
			& \lesssim (1 +t)^{-\frac{3}{2} -k} \left( \| (d^l_0, \mathscr{A}^l_0) \|_{L^1}^2 +\|\nabla^k (d^l_0, \mathscr{A}^l_0) \|_0^2 \right),\nonumber
		\end{align}
		which implies the estimate \eqref{4.77} for $d^l$. In addition, following the similar arguments, we can deduce that the   temporal decay estimates \eqref{4.77} also holds for $\mathscr{A}^l$.
	\end{proof}
	
	For the linear problem \eqref{4.61}, we can compute out the corresponding characteristic eigenvalues
	\begin{align}
		\beta_\pm = \left(-{\mu} |\xi|^2 \pm {\mm{i}}|\xi|\sqrt{4\kappa -\mu^2|\xi|^2} \right)/2.
	\end{align}
	Similarly to Lemma \ref{4.37}, we have
	\begin{lem}
		It holds that
		\begin{align}
			\begin{bmatrix} \widehat{M^l}\\ \widehat{\mathscr{M}^l}
			\end{bmatrix}  =:\widehat{\bar{U}^l} = e^{t\bar{\mathcal{G}}_\xi} \widehat{\bar{U}_0^l}: =\begin{bmatrix}
				\bar{\mathcal{G}}_{11}(\xi, t) I& \bar{\mathcal{G}}_{12}(\xi, t) I\\
				\bar{\mathcal{G}}_{21}(\xi, t) I& \bar{\mathcal{G}}_{22}(\xi, t) I
			\end{bmatrix}\begin{bmatrix}
				\widehat{M^l_0} \\
				\widehat{\mathscr{M}^l_0}
			\end{bmatrix},
			\label{4.41}
		\end{align}
		where  $\bar{\mathcal{G}}_{ij}$ for $1\leqslant i$, $j\leqslant 2$ are provided as follows:
		\begin{enumerate}[(1)]
			\item If $|\xi| \neq {2\sqrt{\kappa}}/{\mu}$, i.e. $\beta_+ \neq \beta_-$, then
			\begin{align}
				\begin{cases}
					\bar{\mathcal{G}}_{11}(\xi, t)  := ({\beta_+ e^{\beta_-t} -\beta_- e^{\beta_+t}})/({\beta_+ -\beta_-}),  \\
					\bar{\mathcal{G}}_{12}(\xi, t) : = ({e^{\beta_+t} -e^{\beta_-t}})/({\beta_+ -\beta_-}), \\
					\bar{\mathcal{G}}_{21}(\xi, t) : =- \kappa |\xi|^2 \bar{\mathcal{G}}_{12}(\xi, t),\
					\bar{\mathcal{G}}_{22}(\xi, t) : = \bar{\mathcal{G}}_{11}(\xi, t) -\mu |\xi| ^2 \bar{\mathcal{G}}_{12}(\xi, t) . \nonumber
				\end{cases}
			\end{align}
			\item If $|\xi| =  {2\sqrt{\kappa}}/{\mu}$, i.e. $\beta_+ =\beta_-$, then
			\begin{align}
				\begin{cases}
					\bar{\mathcal{G}}_{11}(\xi, t)   :=(1 -\beta_+ t )e^{\beta_+ t}, \
					\bar{\mathcal{G}}_{12}(\xi, t) := te^{\beta_+ t},  \\
					\bar{\mathcal{G}}_{21}(\xi, t)  : =-{4\kappa^2}{\mu^{-2}} \bar{\mathcal{G}}_{12}(\xi, t), \\
					\bar{\mathcal{G}}_{22}(\xi, t)   := \bar{\mathcal{G}}_{11}(\xi, t) -{4\kappa}{\mu}^{-1}  \bar{\mathcal{G}}_{12}(\xi, t) .\end{cases}\nonumber
			\end{align}
		\end{enumerate}
	\end{lem}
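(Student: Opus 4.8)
The proof is an almost verbatim repetition of that of Lemma \ref{4.37}, so I would only indicate the required modifications. First I would apply the Fourier transform in $y$ to the problem \eqref{4.61}. Since $M^l$ and $\mathscr{M}^l$ are matrix-valued but every one of their scalar entries satisfies the very same second-order system, the analysis reduces to the $2\times 2$ symbol
$$
\bar{\mathcal{G}}(\xi)=\begin{bmatrix} 0 & 1 \\ -\kappa|\xi|^2 & -\mu|\xi|^2\end{bmatrix},
$$
whose characteristic polynomial $\gamma^2+\mu|\xi|^2\gamma+\kappa|\xi|^2=0$ has exactly the roots $\beta_\pm$ recorded above; the identity factor $I$ appearing in \eqref{4.41} merely records that $e^{t\bar{\mathcal{G}}(\xi)}$ acts entrywise on the matrix-valued data.

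Then I would split into the two cases, following the proof of Lemma \ref{4.37}. If $|\xi|\neq 2\sqrt{\kappa}/\mu$, i.e. $\beta_+\neq\beta_-$, the symbol is diagonalizable, $\mathcal{P}^{-1}\bar{\mathcal{G}}(\xi)\mathcal{P}=\mathrm{diag}(\beta_+,\beta_-)$, hence
$$
e^{t\bar{\mathcal{G}}(\xi)}=e^{\beta_+ t}\,\frac{\bar{\mathcal{G}}(\xi)-\beta_- I}{\beta_+-\beta_-}+e^{\beta_- t}\,\frac{\bar{\mathcal{G}}(\xi)-\beta_+ I}{\beta_--\beta_+},
$$
and reading off the $(1,1)$- and $(1,2)$-entries gives the stated $\bar{\mathcal{G}}_{11}$ and $\bar{\mathcal{G}}_{12}$, while the second row of $\bar{\mathcal{G}}(\xi)$ yields $\bar{\mathcal{G}}_{21}=-\kappa|\xi|^2\bar{\mathcal{G}}_{12}$ and $\bar{\mathcal{G}}_{22}=\bar{\mathcal{G}}_{11}-\mu|\xi|^2\bar{\mathcal{G}}_{12}$. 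If $|\xi|=2\sqrt{\kappa}/\mu$, i.e. $\beta_+=\beta_-$, the symbol is a single $2\times 2$ Jordan block, so $e^{t\bar{\mathcal{G}}(\xi)}=e^{\beta_+ t}I+te^{\beta_+ t}\big(\bar{\mathcal{G}}(\xi)-\beta_+ I\big)$; substituting $\kappa|\xi|^2=4\kappa^2/\mu^2$ and $\mu|\xi|^2=4\kappa/\mu$ then produces the degenerate formulas for the $\bar{\mathcal{G}}_{ij}$.

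Since this is word-for-word the argument used for Lemma \ref{4.37} with the replacements $P'(1)+\kappa\rightsquigarrow\kappa$ and $\lambda+\mu\rightsquigarrow\mu$, there is no genuine obstacle; the only mild point requiring (routine) care is keeping track of the matrix-valued structure of $M^l$ and $\mathscr{M}^l$ so that the entrywise scalar computation assembles correctly into the block form \eqref{4.41}.
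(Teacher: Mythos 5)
Your proposal is correct and follows exactly the route the paper intends: the paper gives no separate proof for this lemma, merely stating ``Similarly to Lemma \ref{4.37}'' and leaving the substitutions $P'(1)+\kappa\rightsquigarrow\kappa$, $\lambda+\mu\rightsquigarrow\mu$ implicit, which is precisely the translation you carry out, including the observation that the $I$ factor just records the entrywise action on the matrix-valued unknowns $M^l$, $\mathscr{M}^l$.
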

	Similarly to Lemma \ref{4.82}, we have following  temporal decay rates for the solution
	$$\bar{U^l} = e^{t\bar{G}} \bar{U}_0^l: =\mathscr{F}^{-1}\left(e^{t \mathcal{\bar{G}}_\xi} \widehat{\bar{U_0^l}} \right)\mbox{ with }\bar{U}_0^l =\begin{bmatrix}M^l_0\\ \mathscr{M}^l_0
	\end{bmatrix} . $$
	\begin{lem}
		\label{4.81}
		Assume $(M^l_0, \mathscr{M}^l_0) \in H^j \times H^j$ with $j\geqslant 2$ and $\|(M^l_0, \mathscr{M}^l_0)\|_{L^1}$ is bounded, then the solution $(M^l, \mathscr{M}^l)$ of the linear system \eqref{4.61} satisfies that
		\begin{align}
			\|\nabla^k (M^l, \mathscr{M}^l) \|_0^2 & \lesssim (1 +t)^{-\frac{3}{2} -k} \big( \|(M^l_0, \mathscr{M}^l_0)\|_{L^1}^2 +\|\nabla^k (M^l_0, \mathscr{M}^l_0)\|_0^2 \big), \label{4.79}
		\end{align}
		where $k =0$, $1$ and $2$.
	\end{lem}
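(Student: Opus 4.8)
The plan is to repeat, essentially verbatim, the argument used to prove Lemma~\ref{4.82}, since the ``curl'' problem \eqref{4.61} is obtained from the ``div'' problem \eqref{4.60} by the formal substitution $P'(1)+\kappa\mapsto\kappa$ and $\lambda+\mu\mapsto\mu$, so that all the spectral computations carry over unchanged. First I would record the asymptotic behaviour of the characteristic eigenvalues $\beta_\pm$ exactly as in Lemma~\ref{4.65}: for $|\xi|<2\sqrt{\kappa}/\mu$ one has $\beta_\pm=-\mu|\xi|^2/2\pm\mm{i}\big(\sqrt{\kappa}\,|\xi|+O(|\xi|^3)\big)$; for $|\xi|>2\sqrt{\kappa}/\mu$ one has $\beta_+=-\kappa\mu^{-1}+O(|\xi|^{-2})$ and $\beta_-=-\mu|\xi|^2+\kappa\mu^{-1}+O(|\xi|^{-2})$; and for $|\xi|=2\sqrt{\kappa}/\mu$ the characteristic equation has the double real root $\beta_+=\beta_-=-2\kappa/\mu$. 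In particular $\mm{Re}\,\beta_\pm\leqslant -R$ on $\{|\xi|\geqslant 2\sqrt{\kappa}/\mu\}$ for some $R=R(\mu,\kappa)>0$.

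Next, using the explicit representation \eqref{4.41} of $\widehat{\bar U^l}$ together with these expansions, I would establish the pointwise-in-$\xi$ bounds that are the exact analogues of Lemma~\ref{4.72}: on the low-frequency set $|\xi|<2\sqrt{\kappa}/\mu$,
$$\big|\widehat{\bar U^l}\big|\lesssim e^{-\mu|\xi|^2 t/4}\big(|\widehat{M^l_0}|+|\widehat{\mathscr{M}^l_0}|\big),$$
while on the complementary set $|\xi|\geqslant 2\sqrt{\kappa}/\mu$,
$$\big|\widehat{M^l}\big|\lesssim\big(1+|\xi|^{-2}\big)e^{-Rt}|\widehat{M^l_0}|+|\xi|^{-2}e^{-Rt}|\widehat{\mathscr{M}^l_0}|,\qquad \big|\widehat{\mathscr{M}^l}\big|\lesssim e^{-Rt}|\widehat{M^l_0}|+\big(1+|\xi|^{-2}\big)e^{-Rt}|\widehat{\mathscr{M}^l_0}|.$$
The derivation is the same trigonometric/exponential bookkeeping as in the proof of Lemma~\ref{4.72}: in the low-frequency regime $\cos(\cdot)$ and $\sin(\cdot)$ are bounded and the coefficient playing the role of $\theta_2$ there is $O(|\xi|)$, which is absorbed by the Gaussian prefactor; in the high-frequency regime the ratios $\gamma_\pm/(\gamma_+-\gamma_-)$ are replaced by $\beta_\pm/(\beta_+-\beta_-)$, which are $O(|\xi|^{-2})$ or $O(1)$ by \eqref{4.64}, and $|\xi|$ is bounded away from zero so the factors $|\xi|^{-2}$ are harmless.

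Finally I would close the estimate by Plancherel's theorem and the Hausdorff--Young inequality $\|\widehat f\|_{L^\infty}\leqslant\|f\|_{L^1}$, writing $\|\nabla^k(M^l,\mathscr{M}^l)\|_0^2=\big\||\xi|^k\widehat{(M^l,\mathscr{M}^l)}\big\|_0^2$ and splitting over the two frequency regions. On the low-frequency piece,
$$\int_{|\xi|<2\sqrt{\kappa}/\mu}|\xi|^{2k}e^{-\mu|\xi|^2 t/2}\big(|\widehat{M^l_0}|^2+|\widehat{\mathscr{M}^l_0}|^2\big)\,\mm{d}\xi\lesssim\|(M^l_0,\mathscr{M}^l_0)\|_{L^1}^2\int_{\mathbb{R}^3}|\xi|^{2k}e^{-\mu|\xi|^2 t/2}\,\mm{d}\xi\lesssim(1+t)^{-\frac32-k}\|(M^l_0,\mathscr{M}^l_0)\|_{L^1}^2,$$
using the standard scaling bound $\int_{\mathbb{R}^3}|\xi|^{2k}e^{-c|\xi|^2 t}\,\mm{d}\xi\lesssim(1+t)^{-\frac32-k}$ (trivial for $t\leqslant1$); on the high-frequency piece, since $1+|\xi|^{-4}\lesssim1$ there, Plancherel gives the bound $e^{-2Rt}\|\nabla^k(M^l_0,\mathscr{M}^l_0)\|_0^2\lesssim(1+t)^{-\frac32-k}\|\nabla^k(M^l_0,\mathscr{M}^l_0)\|_0^2$. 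Summing the two contributions yields \eqref{4.79}. There is no genuine obstacle here beyond bookkeeping; the only point needing a little care is uniformity of the constants near the critical frequency $|\xi|=2\sqrt{\kappa}/\mu$, where $\beta_+\to\beta_-$ — this is dealt with, exactly as in Lemma~\ref{4.72}, by the continuity of the entries in \eqref{4.41} and the fact that $\mm{Re}\,\beta_\pm$ stays uniformly negative on the whole region $|\xi|\geqslant 2\sqrt{\kappa}/\mu$.
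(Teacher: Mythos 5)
Your proposal is correct and takes exactly the paper's approach: the paper gives no separate proof of Lemma~\ref{4.81}, stating only that it follows ``Similarly to Lemma~\ref{4.82},'' and your argument is precisely that instantiation, replaying Lemmas~\ref{4.65}, \ref{4.72} and \ref{4.82} under the substitution $P'(1)+\kappa\mapsto\kappa$, $\lambda+\mu\mapsto\mu$. The only (harmless) imprecision is the parenthetical remark about the scaling integral being ``trivial for $t\leqslant 1$'': the integral $\int_{\mathbb{R}^3}|\xi|^{2k}e^{-c|\xi|^2t}\,\mathrm{d}\xi$ is divergent at $t=0$, but the bound you need is over the ball $|\xi|<2\sqrt{\kappa}/\mu$, where it is indeed uniformly bounded for $t\leqslant 1$.
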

	
	Finally, we will present the explicit expressions and the temporal decay rates for the solutions of the linear problem \eqref{1.13}. Obviously we can reformulate \eqref{1.13} as the following equivalent form:
	\begin{equation}\label{4.32}
		\begin{cases}
			U^l_t = G U^l,\\
			U^l|_{t=0}= U^l_0
		\end{cases}
		\mbox{where } U^l :=\begin{bmatrix}
			\eta^l\\ u^l\end{bmatrix}\mbox{ and } U^l_0 : =\begin{bmatrix}\eta^0\\ u^0 \end{bmatrix}.
	\end{equation}
	\begin{pro}
		\label{4.8}
		Assume $U^l_0 \in  H^j$ with $j \geqslant 2$ and $\|U^l_0 \|_{L^1}$ is bounded,
		the solution of \eqref{4.32} is written as
		\begin{align}
			\label{2022411101925}
			U^l(t) = e^{tG} U^l_0 := \mathscr{F}^{-1}\left(e^{t\mathcal{G}(|\xi|)}\widehat{U^l_0}\right),
		\end{align}
		where we have defined that
		$$e^{t\mathcal{G}(|\xi|)}:= \begin{bmatrix}
			\mathcal{G}_{11}(\xi, t) & \mathcal{G}_{12}(\xi, t)\\
			\mathcal{G}_{21}(\xi, t) & \mathcal{G}_{22}(\xi, t)
		\end{bmatrix}
		$$
		with
		\begin{equation}
			\begin{cases}
				\mathcal{G}_{11}(\xi, t)  := \left( \tilde{\mathcal{G}}_{11}(\xi, t) -\bar{\mathcal{G}}_{11}(\xi, t)\right) |\xi|^{-2} \xi \otimes \xi +\bar{\mathcal{G}}_{11} (\xi, t) I, \nonumber\\
				\mathcal{G}_{12}(\xi, t)  := \left(\tilde{\mathcal{G}}_{12}(\xi, t) -\bar{\mathcal{G}}_{12} (\xi, t)\right) |\xi|^{-2} \xi \otimes \xi +\bar{\mathcal{G}}_{12} (\xi, t) I, \nonumber\\
				\mathcal{G}_{21}(\xi, t)  :=\left(\tilde{\mathcal{G}}_{21}(\xi, t) - \bar{\mathcal{G}}_{21}(\xi, t)\right) |\xi|^{-2} \xi \otimes \xi +\bar{\mathcal{G}}_{21} (\xi, t) I, \nonumber\\
				\mathcal{G}_{22}(\xi, t)  := \left( \tilde{\mathcal{G}}_{22}(\xi, t) - \bar{\mathcal{G}}_{22}(\xi, t) \right) |\xi|^{-2} \xi \otimes \xi +\bar{\mathcal{G}}_{22} (\xi, t) I.\nonumber
			\end{cases}
		\end{equation}Moreover,
		\begin{align}
			\| \nabla^k U^l(t) \|_0^2 \lesssim (1 +t)^{-\frac{3}{2} -k} \big( \| U^l_0 \|_{L^1}^2  + \| \nabla^k  U^l_0  \|_0^2 \big),
			\label{4.83}
		\end{align}
		where $k =0$, $1$ and $2$.
	\end{pro}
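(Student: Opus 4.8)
The plan is to reconstruct $U^l=(\eta^l,u^l)^\top$ from the two decoupled systems \eqref{4.60} and \eqref{4.61} by means of the Hodge decomposition set up in \eqref{4.45}, and then to read off \eqref{4.83} from the frequency-localized decay estimates already established for the ``longitudinal'' variables $(d^l,\mathscr{A}^l)$ (Lemmas \ref{4.72} and \ref{4.82}) and for the ``rotational'' variables $(M^l,\mathscr{M}^l)$ (the rotational counterpart of Lemma \ref{4.37} and Lemma \ref{4.81}). The whole argument lives in Fourier variables, where every operator involved is a Fourier multiplier.

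First I would make the Hodge reconstruction precise. Writing $\hat\xi:=\xi/|\xi|$ and $\mathcal P_\parallel(\xi):=\hat\xi\otimes\hat\xi=|\xi|^{-2}\xi\otimes\xi$, $\mathcal P_\perp(\xi):=I-\mathcal P_\parallel(\xi)$ for the projections of a $3$-vector onto the span of $\xi$ and its orthogonal complement, one checks from \eqref{4.45} that in Fourier variables the longitudinal part $\mathcal P_\parallel\widehat{\eta^l}$ is recovered from $\widehat{d^l}$ and the transversal part $\mathcal P_\perp\widehat{\eta^l}$ from $\widehat{M^l}$, and likewise $u^l$ is recovered from $\mathscr A^l$ and $\mathscr M^l$. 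The decisive algebraic point is that the Fourier symbol of $G$,
\[
\widehat G(\xi)=\begin{bmatrix} 0 & I\\[2pt] -P'(1)\,\xi\otimes\xi-\kappa|\xi|^2 I & -\lambda\,\xi\otimes\xi-\mu|\xi|^2 I\end{bmatrix},
\]
commutes with the block-diagonal projections $\mathrm{diag}(\mathcal P_\parallel,\mathcal P_\parallel)$ and $\mathrm{diag}(\mathcal P_\perp,\mathcal P_\perp)$, and that on the longitudinal subspace, where $\xi\otimes\xi$ acts as multiplication by $|\xi|^2$, it restricts exactly to $\tilde{\mathcal G}(\xi)$, while on the transversal subspace, where $\xi\otimes\xi$ vanishes, it restricts to $\bar{\mathcal G}(\xi)$. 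Consequently $e^{t\widehat G(\xi)}$ splits as $e^{t\tilde{\mathcal G}(\xi)}$ acting on the longitudinal part plus $e^{t\bar{\mathcal G}(\xi)}$ acting on the transversal part; substituting $\mathcal P_\perp=I-\mathcal P_\parallel$ together with the explicit entries of $\tilde{\mathcal G}$ and $\bar{\mathcal G}$ from Lemma \ref{4.37} and its rotational analogue yields precisely the block matrix $e^{t\mathcal G(|\xi|)}$ displayed in the statement, hence \eqref{2022411101925}. That $U^l$ so defined solves \eqref{4.32} is then immediate, because by construction its longitudinal and transversal Fourier components solve \eqref{4.35} and \eqref{4.41}, which were already integrated in Lemma \ref{4.37} and its counterpart.

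For the decay estimate \eqref{4.83} I would invoke Plancherel's theorem, $\|\nabla^k U^l(t)\|_0^2=\int_{\mathbb R^3}|\xi|^{2k}|\widehat{U^l}(\xi,t)|^2\,\mathrm d\xi$, and bound the integrand by the sum of its longitudinal and transversal contributions, to which the pointwise bounds of Lemma \ref{4.72} (and the analogous ones underlying Lemma \ref{4.81}) apply verbatim. The only point requiring care is that Lemmas \ref{4.82} and \ref{4.81} are phrased with $\|(d^l_0,\mathscr A^l_0)\|_{L^1}$ and $\|(M^l_0,\mathscr M^l_0)\|_{L^1}$, whereas here the hypothesis is on $\|U^l_0\|_{L^1}$; but in those proofs the $L^1$-norm enters only to control the low-frequency values of the Fourier transforms, and one has $|\widehat{d^l_0}(\xi)|=|\xi|^{-1}|\xi\cdot\widehat{\eta^0}(\xi)|\le|\widehat{\eta^0}(\xi)|\le\|U^l_0\|_{L^1}$, and similarly for $\widehat{M^l_0}$, $\widehat{\mathscr A^l_0}$, $\widehat{\mathscr M^l_0}$, so each such step goes through with $\|U^l_0\|_{L^1}$ on the right. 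In the same way $\|\nabla^k d^l_0\|_0\le\|\nabla^k\eta^0\|_0$, etc., so the high-frequency region contributes $\lesssim e^{-2Rt}\|\nabla^k U^l_0\|_0^2$. Combining this with the low-frequency bound $\int_{|\xi|<\rho}|\xi|^{2k}e^{-c|\xi|^2 t}\,\mathrm d\xi\lesssim(1+t)^{-\frac32-k}$ (the sphere $|\xi|=\rho$ being of measure zero) gives \eqref{4.83}.

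The step I expect to be the real obstacle is the algebraic identification in the second paragraph: verifying carefully that $\widehat G(\xi)$ leaves the longitudinal and transversal subspaces invariant, restricts to $\tilde{\mathcal G}(\xi)$ and $\bar{\mathcal G}(\xi)$ on them, and hence that the block matrix $e^{t\mathcal G(|\xi|)}$ assembled from Lemma \ref{4.37} and its rotational analogue really is the semigroup generated by $G$. Once this is settled, \eqref{4.83} is a routine combination of Plancherel's theorem with the frequency-localized bounds of Lemmas \ref{4.72}, \ref{4.82} and \ref{4.81}.
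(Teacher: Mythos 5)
Your proposal is correct and follows essentially the same route as the paper: Hodge decomposition of $U^l$ into longitudinal and transversal parts, the explicit $2\times2$ Green matrices $\tilde{\mathcal G}$ and $\bar{\mathcal G}$ from Lemma \ref{4.37} and its rotational counterpart, and Plancherel combined with the frequency-localized decay bounds of Lemmas \ref{4.72}, \ref{4.82}, \ref{4.81}. The only cosmetic differences are that the paper derives \eqref{2022411101925} by writing out the Fourier transforms explicitly via \eqref{4.42}--\eqref{4.34} rather than invoking the commutation of $\widehat G(\xi)$ with the Hodge projections, and for \eqref{4.83} the paper feeds the component-wise data $(\eta^0_j,u^0_j)$ into Lemma \ref{4.82} as auxiliary solutions $(\phi_j^l,\psi_j^l)$ instead of transferring the $L^1$ bound pointwise as you do — both devices accomplish the same thing.
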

	\begin{proof}
By both the relations
			$(- \Delta )^{-1} \eta^l = \Lambda^{-2} \eta^l$ and $\Delta \eta^l = \nabla \mm{div} \eta^l + \mm{div}(\mm{curl} \eta^l) $, it holds that
			\begin{align}
				\eta^l & = -(-\Delta)^{-1} \left(\nabla \mm{div} \eta^l + \mm{div}(\mm{curl} \eta^l) \right) = -\Lambda^{-2} \left(\nabla \mm{div} \eta^l + \mm{div}(\mm{curl} \eta^l) \right) \nonumber\\
				& = -\Lambda^{-1} \nabla (\Lambda^{-1} \mm{\div} \eta^l) -\Lambda^{-1} \mm{div} (\Lambda^{-1} \mm{curl} \eta^l) = -\Lambda^{-1} \nabla d^l -\Lambda^{-1} \mathrm{div} M^l.\label{4.42}
			\end{align}
			Since
			\begin{equation}
				\begin{cases}
					\widehat{d^l} = \mm{i} |\xi|^{-1} \xi^\top \widehat{\eta^l}, \  \widehat{M^l} = \mm{i} |\xi|^{-1} \left(\widehat{\eta^l}\xi^\top -\xi\widehat{\eta^l}^\top \right), \\
					\widehat{\mathscr{A}^l} = \mm{i} |\xi|^{-1} \xi^\top \widehat{u^l}, \
					\widehat{\mathscr{M}^l} = \mm{i} |\xi|^{-1} \left(\widehat{u^l}\xi^\top -\xi\widehat{u^l}^\top \right), \label{4.34}
				\end{cases}
		\end{equation}
		then we can derive from \eqref{4.38}, \eqref{4.41}, \eqref{4.42}  and  \eqref{4.34} that
	\begin{align}
				\widehat{ \eta^l } & = -\mm{i}|\xi|^{-1} \widehat{d^l}\xi -\mm{i}|\xi|^{-1}\widehat{M^l} \xi \nonumber\\
				&= -\mm{i}|\xi|^{-1} \left( \tilde{\mathcal{G}}_{11}(\xi, t)\widehat{d^l_0}+\tilde{\mathcal{G}}_{12}(\xi, t)\widehat{\mathscr{A}^l_0} \right)\xi -\mm{i}|\xi|^{-1} \left(  \bar{\mathcal{G}}_{11}(\xi, t)\widehat{M^l_0} + \bar{\mathcal{G}}_{12}(\xi, t)\widehat{\mathscr{M}^l_0} \right)\xi  \nonumber\\
				& = \left(\tilde{\mathcal{G}}_{11}(\xi, t) {|\xi|^{-2}}  {\xi \otimes \xi} +\bar{\mathcal{G}}_{11}(\xi, t) \left( I- {|\xi|^{-2}} {\xi \otimes \xi} \right) \right)\widehat{\eta^0}, \nonumber\\
				& \quad +\left(\tilde{\mathcal{G}}_{12} (\xi, t) {|\xi|^{-2}} {\xi \otimes \xi} +\bar{\mathcal{G}}_{12} (\xi, t) \left({ I - |\xi|^{-2}} {\xi \otimes \xi}  \right)\right) \widehat{u^0}. \label{20224111019251}
		\end{align}
		Similarly, with the aid of the relation 
			\begin{align}
				\label{4.43}
				u^l  =  -\Lambda^{-1} \nabla \mathscr{A}^l -\Lambda^{-1} \mathrm{div} \mathscr{M}^l,
		\end{align}
	it can be obtained from the three formulas \eqref{4.38}, \eqref{4.41} and \eqref{4.34} that
			\begin{align}
				\widehat{u^l} & = \left( \tilde{\mathcal{G}}_{21}(\xi, t){|\xi|^{-2}}{\xi \otimes \xi} +\bar{\mathcal{G}}_{21}(\xi, t)\left({I - |\xi|^{-2}}{\xi \otimes \xi}  \right)\right)\widehat{\eta^0} \nonumber\\
				& \quad +\left( \tilde{\mathcal{G}}_{22}(\xi, t){|\xi|^{-2}}{\xi \otimes \xi} +\bar{\mathcal{G}}_{22}(\xi, t)\left( I - |\xi|^{-2} \xi \otimes \xi   \right)\right) \widehat{u^0}.\label{20220411102125}
		\end{align}
		Consequently, we immediately get \eqref{2022411101925} from \eqref{20224111019251} and
		\eqref{20220411102125}.
		
	Let $1\leqslant j\leqslant 3$ and $0 \leqslant k \leqslant 2$. Recalling Lemmas \ref{4.37} and \ref{4.82}, the linear problem \eqref{4.35} with the initial data $(\eta^0_j, u^0_j)^{\top}$ in place of  $(d_0^l, \mathscr{A}_0^l)^{\top}$ admits a unique solution, denoted by $(\phi_j^l, \psi_j^l)^\top$, which satisfies 
			\begin{align}
				&\|\nabla^k (\phi^l_j, \psi^l_j) \|_0 ^2 \lesssim (1 +t)^{-\frac{3}{2} -k} \left( \|(\eta^0_j, u^0_j)\|_{L^1}^2 +\|\nabla^k (\eta^0_j, u^0_j)\|_0^2 \right), \label{4saf77} \\
				&\widehat{\phi_j^l} = \tilde{\mathcal{G}}_{11}(\xi, t) \widehat{\eta^0_j}+  \tilde{\mathcal{G}}_{12}(\xi, t) \widehat{u^0_j}   
				\mbox{ and }
				\widehat{\psi_j^l} = \tilde{\mathcal{G}}_{21}(\xi, t) \widehat{\eta^0_j}+  \tilde{\mathcal{G}}_{22}(\xi, t) \widehat{u^0_j}. \label{4saf78}
			\end{align} 
			By virtue of \eqref{4saf77}, \eqref{4saf78} and the Plancherel theorem, we have
			\begin{align}
				&\left\| |\xi|^k \left( \tilde{\mathcal{G}}_{11}(\xi, t) \widehat{\eta^0_j}+  \tilde{\mathcal{G}}_{12}(\xi, t) \widehat{u^0_j}   \right) \right\|_0^2+
				\left\| |\xi|^k \left( \tilde{\mathcal{G}}_{21}(\xi, t) \widehat{\eta^0_j}+  \tilde{\mathcal{G}}_{22}(\xi, t) \widehat{u^0_j}  \right) \right\|_0^2\nonumber\\
				& \lesssim \|\nabla^k (\phi_j, \psi_j) \|_0 ^2 \lesssim (1 +t)^{-\frac{3}{2} -k} \left( \|(\eta^0_j, u^0_j)\|_{L^1}^2 +\|\nabla^k (\eta^0_j, u^0_j)\|_0^2 \right). \label{4.90}
			\end{align}
			Similarly to \eqref{4.90}, we can obtain that 
			\begin{align}
				&\left\| |\xi|^k \left( \bar{\mathcal{G}}_{11}(\xi, t) \widehat{\eta^0_j}+  \bar{\mathcal{G}}_{12}(\xi, t) \widehat{u^0_j}  \right) \right\|_0^2+
				\left\| |\xi|^k \left( \bar{\mathcal{G}}_{21}(\xi, t) \widehat{\eta^0_j} +  \bar{\mathcal{G}}_{22}(\xi, t) \widehat{u^0_j} \right) \right\|_0^2 \nonumber\\
				&  \lesssim (1 +t)^{-\frac{3}{2} -k} \left( \|(\eta^0_j, u^0_j)\|_{L^1}^2 +\|\nabla^k (\eta^0_j, u^0_j)\|_0^2 \right). \label{4.91}	
			\end{align}
			From the expression \eqref{20224111019251}, the above two estimates and the fact that $|\xi|^{-2}|\xi_i| |\xi_j| \lesssim 1 $ for any $\xi \in \mathbb{R}^3 $ and $1 \leqslant i$, $j \leqslant 3$, it holds 
			\begin{align}
				\|\nabla^k \eta^l \|_0^2 & \lesssim \left\| |\xi|^{-2} \xi \otimes \xi |\xi|^k \left( \tilde{\mathcal{G}}_{11}(\xi, t) \widehat{\eta^0} + \tilde{\mathcal{G}}_{12}(\xi, t) \widehat{u^0} \right) \right\|_0^2 \nonumber\\
				& \quad +\left\| \left( I - |\xi|^{-2} \xi \otimes \xi \right) |\xi|^k \left( \bar{\mathcal{G}}_{11}(\xi, t) \widehat{\eta^0} + \bar{\mathcal{G}}_{12}(\xi, t) \widehat{u^0} \right) \right\|_0^2 \nonumber\\
				& \lesssim \sum_{j =1}^{3} \left\| |\xi|^k \left( \tilde{\mathcal{G}}_{11}(\xi, t) \widehat{\eta_j^0} + \tilde{\mathcal{G}}_{12}(\xi, t) \widehat{u_j^0}\right) \right\|_0^2 \nonumber\\
				&\quad +\sum_{j =1}^{3} \left\|  |\xi|^k \left( \bar{\mathcal{G}}_{11}(\xi, t) \widehat{\eta_j^0} + \bar{\mathcal{G}}_{12}(\xi, t) \widehat{u_j^0}\right) \right\|_0^2 \nonumber\\
				& \lesssim (1 +t)^{-\frac{3}{2} -k}\left( \|(\eta^0, u^0)\|_{L^1}^2 + \|\nabla^k (\eta^0, u^0)\|_0^2 \right), \label{4.18}
			\end{align}
			where $0 \leqslant k \leqslant 2$. Similarly to \eqref{4.18}, we can deduce from \eqref{4.90} and \eqref{4.91} that 
			\begin{align}
				\|\nabla^k u^l \|_0^2 & \lesssim \left\| |\xi|^{-2} \xi \otimes \xi |\xi|^k \left( \tilde{\mathcal{G}}_{21}(\xi, t) \widehat{\eta^0} + \tilde{\mathcal{G}}_{22}(\xi, t) \widehat{u^0} \right) \right\|_0^2 \nonumber\\
				& \quad +\left\| \left( I - |\xi|^{-2} \xi \otimes \xi  \right) |\xi|^k \left( \bar{\mathcal{G}}_{21}(\xi, t) \widehat{\eta^0} + \bar{\mathcal{G}}_{22}(\xi, t) \widehat{u^0} \right) \right\|_0^2 \nonumber\\
				& \lesssim \sum_{j =1}^{3} \left\| |\xi|^k \left( \tilde{\mathcal{G}}_{21}(\xi, t) \widehat{\eta_j^0} + \tilde{\mathcal{G}}_{22}(\xi, t) \widehat{u_j^0}\right) \right\|_0^2 \nonumber\\
				&\quad +\sum_{j =1}^{3} \left\|  |\xi|^k \left( \bar{\mathcal{G}}_{21}(\xi, t) \widehat{\eta_j^0} + \bar{\mathcal{G}}_{22}(\xi, t) \widehat{u_j^0}\right) \right\|_0^2 \nonumber\\
				& \lesssim (1 +t)^{-\frac{3}{2} -k}\left( \|(\eta^0, u^0)\|_{L^1}^2 + \|\nabla^k (\eta^0, u^0)\|_0^2 \right),  \label{4.46}
			\end{align}
			for $0 \leqslant k \leqslant 2$. Therefore, combining  \eqref{4.18} and \eqref{4.46}, we can directly have \eqref{4.83} . 
		
	\end{proof}
	
	\subsection{Decay estimates for the nonlinear problem}\label{202410272204}
	This section is served to elucidate the long-time behaviour of solutions to the nonlinear Cauchy problem \eqref{1.17n}.
	Let $(\eta,u)$ be the solution given by Theorem \ref{1.12} and $\mathcal{N}$ be defined by \eqref{1.4}. Since \eqref{1.17n} and \eqref{1.21} are equivalent to each other, the problem \eqref{1.17n} can be rewritten as
	\begin{equation}\label{4.28}
		\begin{cases}
			U_t =G U +\mathcal{H}, \\
			U^0 = (\eta^0, u^0),
		\end{cases}
		\mbox{where }  U=
		\begin{bmatrix}
			\eta\\
			u
		\end{bmatrix} \mbox{ and }\mathcal{H} =
		\begin{bmatrix}
			0\\ \mathcal{N}
		\end{bmatrix}.
	\end{equation}
	According to the Duhamel principle, we can express the solution $U$  by
	\begin{align}
		\label{4.29}
		U = U^l +\int_0^t e^{(t -\tau)G} \mathcal{H}(\tau) \mm{d}\tau,
	\end{align}
	where $U^l=(\eta^l,u^l)$.
	Before the derivation of finer decay estimates of $U$,  we shall renew to finely estimate for the norms $\|\mathcal{N}^u\|_{L^1}$, $\|\nabla^k \mathcal{N}^u\|_0$, $\|\mathcal{N}_P\|_{L^1}$ and $\|\nabla^k \mathcal{N}_P\|_0$
	with $k =0$, $1$ and $2$. To this purpose, we define that
	\begin{align} 
		\mathcal{S}(t):= & \underset{0 \leqslant \tau \leqslant t, \, k=0, 1, 2}{\sup} \left\{ (1 +\tau)^{\frac{3}{4} +\frac{k}{2}}(\|\nabla^k \eta\|_0 +  \|\nabla^k u\|_0) +(1 + \tau)(\sqrt{\kappa}\|\Delta \nabla \eta\|_1+ \|\nabla^2 u\|_1) \right. \nonumber\\
		& \left.  +\sqrt{\kappa}\|\nabla \eta\|_2 +\sqrt{\kappa}(1+\tau)^{\frac{1}{2}} \|\Delta \eta\|_2 + \|(1 +\tau)\nabla \Delta u \|_{L^2(I_t; H^1)} \right\}. \nonumber
	\end{align}
	We then infer that the above norms of $\mathcal{N}^u$ can be upper bounded by $\mathcal{S}(t)$ as follows.
	\begin{lem}
		\label{4.13}
		Let  $\tau \in [0, t]$ with any given $t \geqslant 0$, the followings hold.
		\begin{align}
			& \|\mathcal{N}^u(\tau)\|_{L^1}  \lesssim \kappa^{-\frac{1}{2}} (1 +\tau)^{-\frac{7}{4}} \sqrt{\mathcal{E}^0}\mathcal{S}(t), \label{4.1} \\
			& \|\nabla^k\mathcal{N}^u(\tau) \|_0  \lesssim \kappa^{-\frac{1}{2}} (1 +\tau)^{-\frac{7}{4}}\sqrt{\mathcal{E}^0} \mathcal{S}(t)\mbox{ for } k=0\mbox{ and } 1, \label{4.2} \\
			& \|\nabla^2\mathcal{N}^u(\tau)\|_0  \lesssim \kappa^{-\frac{1}{2}}\sqrt{\mathcal{E}^0}\left( (1 +\tau)^{-\frac{7}{4}}\mathcal{S}(t) +  (1 +\tau)^{-\frac{3}{4}}  \|\nabla \Delta u\|_1\right).\label{4.3}
		\end{align}
	\end{lem}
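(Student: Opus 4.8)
The plan is to exploit the structural description of $\mathcal{N}^u$ already established in the proof of Lemma~\ref{4.19}: expanding $\tilde{\mathcal{A}}$, $\mathcal{A}$, $J$ and $J^{-1}$ by means of Lemma~\ref{4.58} (in particular \eqref{4.54} and \eqref{2024010262121}) shows, as in \eqref{2.10}, that $\mathcal{N}^u$ is a finite sum of terms of the schematic form $\mathfrak{c}(\nabla\eta)\,\partial^{a_1}\eta\cdots\partial^{a_m}\eta\,\partial^b u$, where $\mathfrak{c}$ is a smooth, uniformly bounded (close to a constant, by \eqref{2.7}) matrix-valued function, $m\geqslant1$, each $a_i\in\{1,2\}$ and $b\in\{1,2\}$; the two dominant contributions are $\tilde{\mathcal{A}}\nabla^2 u$ and $(\nabla\tilde{\mathcal{A}})\nabla u$, and the prefactor $1+\|\Delta\eta\|_2^3$ is $\lesssim1$ under \eqref{2.4}--\eqref{2.5}. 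I would estimate each such product by H\"older, the embedding $H^2\hookrightarrow L^\infty$ and the interpolation $\|\nabla f\|_{L^\infty}\lesssim\|\nabla^2 f\|_0^{1/2}\|\nabla^3 f\|_0^{1/2}$ (as in \eqref{4.9}), always assigning the factor(s) $\partial^{a_i}\eta$ (and $\mathfrak{c}$) to norms controlled by \eqref{0.11} — namely $\|\nabla\eta\|_2\lesssim\kappa^{-1/2}\sqrt{\mathcal{E}^0}$, $\|\Delta\eta\|_2\lesssim\kappa^{-1/2}(1+\tau)^{-1/2}\sqrt{\mathcal{E}^0}$, $\|\Delta\nabla\eta\|_1\lesssim\kappa^{-1/2}(1+\tau)^{-1}\sqrt{\mathcal{E}^0}$, hence $\|\nabla\eta\|_{L^\infty}\lesssim\kappa^{-1/2}(1+\tau)^{-3/4}\sqrt{\mathcal{E}^0}$ and $\|\nabla^2\eta\|_{L^\infty}\lesssim\kappa^{-1/2}(1+\tau)^{-1}\sqrt{\mathcal{E}^0}$ — while assigning the single $u$-factor (and, after differentiating $\mathcal{N}^u$, the higher $u$-derivatives produced) to norms controlled by $\mathcal{S}(t)$, namely $\|\nabla^j u\|_0\lesssim(1+\tau)^{-3/4-j/2}\mathcal{S}(t)$ for $j=0,1,2$, $\|\nabla^3 u\|_0\lesssim(1+\tau)^{-1}\mathcal{S}(t)$ and $\|\nabla u\|_{L^\infty}\lesssim(1+\tau)^{-11/8}\mathcal{S}(t)$. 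One $\eta$-factor supplies the factor $\kappa^{-1/2}\sqrt{\mathcal{E}^0}$; any further $\eta$-factor contributes an extra $\kappa^{-1/2}\sqrt{\mathcal{E}^0}\lesssim1$ (by \eqref{1.22}) and only improves the bound.

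With this bookkeeping, \eqref{4.1} follows from Cauchy--Schwarz with both remaining factors in $L^2$: the leading term contributes $\|\tilde{\mathcal{A}}\|_0\|\nabla^2 u\|_0\lesssim\kappa^{-1/2}\sqrt{\mathcal{E}^0}\,(1+\tau)^{-7/4}\mathcal{S}(t)$, and $(\nabla\tilde{\mathcal{A}})\nabla u$ contributes $\|\nabla^2\eta\|_0\|\nabla u\|_0\lesssim\kappa^{-1/2}(1+\tau)^{-1/2}\sqrt{\mathcal{E}^0}\cdot(1+\tau)^{-5/4}\mathcal{S}(t)$, both $\lesssim\kappa^{-1/2}(1+\tau)^{-7/4}\sqrt{\mathcal{E}^0}\mathcal{S}(t)$. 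For \eqref{4.2} one distributes $\nabla^k$ ($k=0,1$) over the products, puts the lowest-order $\eta$-factor in $L^\infty$ and the top $u$-derivative in $L^2$; the critical case is the term $\tilde{\mathcal{A}}\nabla^3 u$ inside $\nabla\mathcal{N}^u$, which is bounded by $\|\nabla\eta\|_{L^\infty}\|\nabla^3 u\|_0\lesssim\kappa^{-1/2}(1+\tau)^{-3/4}\sqrt{\mathcal{E}^0}\,(1+\tau)^{-1}\mathcal{S}(t)$, i.e. precisely of temporal weight $-7/4$, while every other term of $\mathcal{N}^u$ and $\nabla\mathcal{N}^u$ carries temporal weight at most $-7/4$ (most of them strictly faster).

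The one genuinely delicate point — and the main obstacle — is \eqref{4.3}. Differentiating $\tilde{\mathcal{A}}\nabla^2 u$ twice yields, besides harmless pieces, the term $\tilde{\mathcal{A}}\,\nabla^4 u$; since $\nabla^4 u$ is only square-integrable in time (it is controlled pointwise in $\tau$ merely through $\|\nabla\Delta u\|_1$, which enters $\mathcal{S}(t)$ only via the last, $L^2(I_t;H^1)$-type slot), this term cannot be assigned a pointwise decay of order $-7/4$. Instead I would estimate it as $\|\tilde{\mathcal{A}}\|_{L^\infty}\|\nabla^4 u\|_0\lesssim\|\nabla\eta\|_{L^\infty}\|\nabla\Delta u\|_1\lesssim\kappa^{-1/2}(1+\tau)^{-3/4}\sqrt{\mathcal{E}^0}\,\|\nabla\Delta u\|_1$, which is exactly the extra summand on the right-hand side of \eqref{4.3}; crucially the $\eta$-factor here is controlled by \eqref{0.11} alone, so no power of $\mathcal{S}(t)$ is spent and $\|\nabla\Delta u\|_1$ survives intact to be integrated against the Green's-function weights in the Duhamel formula \eqref{4.29}. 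All the remaining terms of $\nabla^2\mathcal{N}^u$ — $(\nabla^2\tilde{\mathcal{A}})\nabla^2 u$, $(\nabla\tilde{\mathcal{A}})\nabla^3 u$, the terms containing $\nabla^3\tilde{\mathcal{A}}$ or $\nabla^4\tilde{\mathcal{A}}$, together with the cubic-or-higher $\eta$-terms — are handled exactly as before, with a $u$-derivative of order at most three kept in $L^2$, and are all $\lesssim\kappa^{-1/2}(1+\tau)^{-7/4}\sqrt{\mathcal{E}^0}\mathcal{S}(t)$. Collecting the three cases yields Lemma~\ref{4.13}.
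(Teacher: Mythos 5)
Your proposal is correct and follows essentially the same route as the paper's proof: rewrite $\mathcal{N}^u$ schematically as a bounded-coefficient combination of $\nabla\eta\,\nabla^2 u$ and $\nabla^2\eta\,\nabla u$ (as in \eqref{2.10}), assign the $\eta$-factors to norms controlled pointwise in $\tau$ by $\mathcal{E}^0$ via \eqref{0.11}, assign the $u$-factors to the $\mathcal{S}(t)$-controlled norms, and for \eqref{4.3} isolate the single term $\tilde{\mathcal{A}}\,\nabla^4 u$ (the paper's $\|\nabla\eta\,\nabla^4 u\|_0$ piece in \eqref{4.12}) as the source of the $(1+\tau)^{-3/4}\|\nabla\Delta u\|_1$ contribution. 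The only cosmetic differences are in the choice of Gagliardo--Nirenberg interpolations (you use $\|\nabla\eta\|_{L^\infty}\lesssim\|\nabla^2\eta\|_0^{1/2}\|\nabla^3\eta\|_0^{1/2}$ throughout, whereas the paper mixes $L^4$- and $L^1/L^2$-interpolations, e.g.\ obtaining $k=0$ from $\|\mathcal{N}^u\|_0\lesssim\|\mathcal{N}^u\|_{L^1}^{2/5}\|\nabla\mathcal{N}^u\|_0^{3/5}$); both give rates at least as fast as $(1+\tau)^{-7/4}$, so the conclusions agree.
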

	\begin{proof}
		Recalling  \eqref{0.11}, \eqref{2.7} and the definition of $\mathcal{N}^u $, we have
		\begin{align}
			\| \mathcal{N}^u(\tau) \|_{L^1} & \lesssim \|\nabla^2 \eta\|_0 \|\nabla u\|_0 +\|\nabla \eta\|_0 \|\nabla^2 u\|_0 \nonumber\\
			& \lesssim \kappa^{-\frac{1}{2}}(1 +\tau)^{-\frac{7}{4}}\left(   \sqrt{\kappa(1 +\tau)}\|\nabla^2 \eta\|_2 (1 +\tau)^\frac{5}{4} \|\nabla u\|_0  \right.\nonumber\\
			& \quad +\left. \sqrt{\kappa} \|\nabla \eta\|_{2} (1 +\tau)^\frac{7}{4}\|\nabla^2 u\|_0 \right)\nonumber\\
			& \lesssim \kappa^{-\frac{1}{2}}(1 +\tau)^{-\frac{7}{4}} \sqrt{\mathcal{E}_0} \mathcal{S}(t), \nonumber
		\end{align}
		which implies \eqref{4.1}.
		
		From the above similar arguments, it can be obtained that
		\begin{align}
			\|\nabla \mathcal{N}^u \|_0 \lesssim &\sum_{i,j,k,l=1}^3
			\left(\| \nabla^2 \eta \|_{L^4} \|\nabla(\partial_i\eta_j\partial_ku_l)\|_{L^4} +\|\nabla^2(\partial_i\eta_j\partial_ku_l)\|_0 \right)\nonumber \\
			\lesssim &\sum_{i,j,k,l=1}^3
			\left( \| \nabla^2 \eta \|_{L^4}\|\nabla(\partial_i\eta_j\partial_ku_l) \|_0+\|\nabla^2(\partial_i\eta_j\partial_ku_l) \|_0\right),
			\label{4.4}
		\end{align}
where, in the last inequality in \eqref{4.4}, we have used the smallness of $\|\nabla \eta\|_{3}$ as \eqref{2.7} and the estimate
			$$\|\nabla(\partial_i\eta_j\partial_ku_l) \|_{L^4} \lesssim \|\nabla(\partial_i\eta_j\partial_ku_l) \|_0 + \|\nabla^2(\partial_i\eta_j\partial_ku_l) \|_0.$$
		By H$\mathrm{\ddot{o}}$lder's and Sobolev's inequalities, we get
		\begin{align}
			\|\nabla^2 \eta\|_{L^4} &  \lesssim  \|\nabla^2 \eta\|_0^\frac{1}{4} \|\nabla^3 \eta\|_0^\frac{3}{4} \nonumber\\
			&  \lesssim \kappa^{-\frac{1}{2}} (1 +\tau)^{-\frac{7}{8}} \left(({\kappa (1 +\tau))^{1/2}} \|\nabla^2 \eta\|_2 \right)^{\frac{1}{4}}\left(\sqrt{\kappa} (1 +\tau) \|\nabla^3 \eta\|_1\right)^{\frac{3}{4}} \nonumber\\
			& \lesssim \kappa^{-\frac{1}{2}} (1 +\tau)^{-\frac{7}{8}} \sqrt{\mathcal{E}_0},\label{4.25}\\
			\| \nabla(\partial_i\eta_j\partial_ku_l)\|_0 & \lesssim \|\nabla^2 \eta \nabla u\|_0 +\|\nabla \eta \nabla^2 u\|_0 \nonumber\\
			& \lesssim \|\nabla^2 \eta\|_{L^4} \|\nabla u\|_{L^4} +\|\nabla \eta\|_{L^4} \|\nabla^2 u\|_{L^4} \nonumber\\
			& \lesssim \|\nabla \eta\|_2 \left(\|\nabla u\|_0^\frac{1}{4} \|\nabla^2 u\|_0^\frac{3}{4} +\|\nabla^2 u\|_0^\frac{1}{4}\|\nabla^3 u\|_0^\frac{3}{4}\right) \nonumber\\
			& \lesssim (1 +\tau)^{-\frac{13}{8}}\left((1 +\tau)^{{5}/{4} }\|\nabla u\|_0\right)^\frac{1}{4} \left((1 +\tau)^{7/4} \|\nabla^2 u\|_0\right)^\frac{3}{4} \nonumber\\
			& \quad  +(1 +\tau)^{-\frac{19}{16}}\left((1 +\tau)^{7/4} \|\nabla^2 u\|_0\right)^\frac{1}{4} \left((1 +\tau) \|\nabla^2 u\|_1 \right)^\frac{3}{4} \nonumber \\
			&\lesssim  (1 +\tau)^{-\frac{19}{16}} \mathcal{S}(t)
			\label{4.6}
		\end{align}
		and
		\begin{align}
			\|\nabla^2(\partial_i\eta_j\partial_ku_l)\|_0 & \lesssim \|\nabla^3 \eta \nabla u\|_0 +\|\nabla^2 \eta \nabla^2 u\|_0 +\|\nabla \eta \nabla^3 u\|_0 \nonumber\\
			& \lesssim \|\nabla^3 \eta\|_0 \|\nabla  u\|_{L^\infty} +\|\nabla^2 \eta\|_{L^4} \|\nabla^2  u\|_{L^4} +\|\nabla \eta\|_{L^\infty} \|\nabla^3  u\|_0 \nonumber\\
			& \lesssim \|\nabla^3 \eta\|_0 \|\nabla u\|_0^{\frac{1}{4}} \|\nabla^3 u\|_0^{\frac{3}{4}} + \|\nabla \eta\|_0^{\frac{1}{4}} \|\nabla^3 \eta\|_0^{\frac{3}{4}} \|\nabla^3 u\|_0 \nonumber\\
			& \quad +\|\nabla^2 \eta\|_0^\frac{1}{4} \|\nabla^3 \eta\|_0^\frac{3}{4} \|\nabla^2 u\|_0^{\frac{1}{4}} \|\nabla^3 u\|_0^{\frac{3}{4}}\nonumber\\
			& \lesssim \kappa^{-\frac{1}{2}}(1 +\tau)^{-\frac{33}{16}} \left((1 +\tau) \|\nabla^2 u\|_1 \right)^\frac{3}{4}\left(  {\kappa}^{1/2} (1 +\tau) \|\nabla^3 \eta\|_1\left((1 +\tau)^{{5/4}} \|\nabla u\|_0 \right)^\frac{1}{4}\right.\nonumber\\
			&\quad +  \left. \left((\kappa (1 +\tau))^{1/2} \|\nabla^2 \eta\|_2\right)^\frac{1}{4} \left({\kappa}^{1/2} (1+ \tau)\|\nabla^3 \eta\|_1\right)^\frac{3}{4}   \left((1 +\tau)^{7/4} \|\nabla^2 u\|_0 \right)^\frac{1}{4}   \right)\nonumber\\
			&\quad +\kappa^{-\frac{1}{2}} (1 +\tau)^{-\frac{7}{4}} \left({\kappa}^{1/2} \|\nabla \eta\|_2\right)^\frac{1}{4} \left({\kappa}^{1/2} (1+ \tau)\|\nabla^3 \eta\|_1\right)^\frac{3}{4} \left((1 +\tau)\|\nabla^2 u\|_1 \right)\nonumber\\
			& \lesssim \kappa^{-\frac{1}{2}} (1 +\tau)^{-\frac{7}{4}} \sqrt{\mathcal{E}_0} \mathcal{S}(t).
			\label{4.7}
		\end{align}
		Inserting \eqref{4.25}--\eqref{4.7} into \eqref{4.4} yields the estimate \eqref{4.2} for $k =1$. Besides, we have
		\begin{align}
			\| \mathcal{N}^u(\tau) \|_0 \lesssim \|\mathcal{N}^u(\tau)\|_{L^1}^{\frac{2}{5}} \|\nabla \mathcal{N}^u(\tau)\|_0^{\frac{3}{5}} \lesssim \kappa^{-\frac{1}{2}} (1 +\tau)^{-\frac{7}{4}}\sqrt{\mathcal{E}^0} \mathcal{S}(t), \nonumber
		\end{align}
		which leads to \eqref{4.2} for $k =0$.
		
		The norm $\|\nabla^2 \mathcal{N}^u\|_0$ will be addressed as follows.
		\begin{align}
			\|\nabla^2 \mathcal{N}^u\|_0 & \lesssim\sum_{i,j,k,l=1}^3\left( \|(\nabla^2 \eta)^2 \nabla (\partial_i\eta_j\partial_ku_l)\|_0 +\|\nabla^3 \eta\nabla (\partial_i\eta_j\partial_ku_l)\|_0\right. \nonumber \\
			&\qquad \left.  +\|\nabla^2 \eta \nabla^2 (\partial_i\eta_j\partial_ku_l) \|_0 +\|\nabla^3(\partial_i\eta_j\partial_ku_l)\|_0\right) \nonumber\\
			& \lesssim \sum_{i,j,k,l=1}^3\left(\|\nabla^2 \eta\|_{L^6}^2 \|\nabla (\partial_i\eta_j\partial_ku_l)\|_{L^6} +\|\nabla^3 \eta\|_{L^3} \|\nabla (\partial_i\eta_j\partial_ku_l) \|_{L^6} \right.\nonumber \\
			&\qquad \left.  +\|\nabla^2 \eta\|_{L^\infty} \|\nabla^2 (\partial_i\eta_j\partial_ku_l)\|_0 +\|\nabla^3(\partial_i\eta_j\partial_ku_l)\|_0 \right)\nonumber\\
			& \lesssim (1 +\|\nabla^3 \eta\|_1 ) \|\nabla^2(\partial_i\eta_j\partial_ku_l)\|_0 +\|\nabla^3(\partial_i\eta_j\partial_ku_l)\|_0 \nonumber\\
			& \lesssim \sum_{i,j,k,l=1}^3\left(\left(1 +\kappa^{-{1/2}}(1 +\tau)^{-1}\left(\kappa^{1/2}  (1 +\tau) \|\nabla^3 \eta\|_1 \right) \right)\|\nabla^2(\partial_i\eta_j\partial_ku_l)\|_0 \right.\nonumber \\
			&\qquad \left.+\|\nabla^3(\partial_i\eta_j\partial_ku_l)\|_0\right) \nonumber\\
			& \lesssim \sum_{i,j,k,l=1}^3\left(\left(1 +\kappa^{-{1/2}}(1 +\tau)^{-1} \sqrt{\mathcal{E}^0} \right)\|\nabla^2(\partial_i\eta_j\partial_ku_l)\|_0 +\|\nabla^3 (\partial_i\eta_j\partial_ku_l)\|_0\right).
			\label{4.10}
		\end{align}
		Note that the norm $\|\nabla^2 (\partial_i\eta_j\partial_ku_l)\|_0$ can be estimated in a manner that differs from \eqref{4.7} as follows:
		\begin{align}
			\|\nabla^2(\partial_i\eta_j\partial_ku_l)\|_0 & \lesssim  \|\nabla \eta\|_2 \|\nabla u\|_0^{\frac{1}{4}} \|\nabla^3 u\|_0^{\frac{3}{4}} + \|\nabla \eta\|_2 \|\nabla^3 u\|_0 \nonumber\\
			&\quad + \|\nabla \eta\|_2 \|\nabla^2 u\|_0^{\frac{1}{4}} \|\nabla^3 u\|_0^{\frac{3}{4}}  \lesssim (1 +t)^{-1} \mathcal{S}(t).
			\label{4.11}
		\end{align}
		In addition, we can obtain
		\begin{align}
			& \|\nabla^3(\partial_i\eta_j\partial_ku_l) \|_0 \nonumber\\
			& \lesssim \|\nabla^4 \eta\|_0 \|\nabla u\|_{L^\infty} +\|\nabla^3 \eta\|_{L^4} \|\nabla^2 u\|_{L^4} \nonumber\\
			&\quad +\|\nabla^2 \eta\|_{L^\infty} \|\nabla^3 u\|_0 +\|\nabla \eta\|_{L^\infty} \|\nabla^4 u\|_0 \nonumber\\
			& \lesssim \|\nabla^3 \eta\|_1 \left( \|\nabla^2 u\|_0^\frac{1}{2} \|\nabla^3 u\|_0^\frac{1}{2} + \|\nabla^2 u\|_0^\frac{1}{4} \|\nabla^3 u\|_0^\frac{3}{4} \right)\nonumber\\
			& \quad +\|\nabla^2 \eta\|_0^\frac{1}{4} \|\nabla^4 \eta\|_0^\frac{3}{4} \|\nabla^3 u\|_0 +\|\nabla^2 \eta\|_0^\frac{1}{2} \|\nabla^3 \eta\|_0^\frac{1}{2} \|\nabla^3 u\|_1\nonumber\\
			& \lesssim \kappa^{-\frac{1}{2}}\left( (1 +\tau)^{-\frac{19}{8}} \left(\sqrt{\kappa} (1 +\tau)\|\nabla^3 \eta\|_1\right) \left((1 +\tau)^{7/4} \|\nabla^2 u\|_0\right)^\frac{1}{2} \left((1 +\tau) \|\nabla^2 u\|_1\right)^\frac{1}{2}\right. \nonumber\\
			&\quad + (1 +\tau)^{-\frac{35}{16}}\left(\sqrt{\kappa} (1 +\tau)\|\nabla^3 \eta\|_1\right) \left((1 +\tau)^{7/4} \|\nabla^2 u\|_0\right)^\frac{1}{4} \left((1 +\tau) \|\nabla^2 u\|_1\right)^\frac{3}{4} \nonumber\\
			&\quad +\ (1 +\tau)^{-\frac{15}{8}} \left((\kappa (1 +\tau))^{1/2} \|\nabla^2 \eta\|_2 \right)^\frac{1}{4} \left(\sqrt{\kappa}(1 +\tau)\|\nabla^3 \eta\|_1 \right)^\frac{3}{4} \left((1 +\tau) \|\nabla^2 u\|_1\right) \nonumber\\
			&\quad + \left. (1 +\tau)^{-\frac{3}{4}} \left((\kappa (1 +\tau))^{1/2} \|\nabla^2 \eta\|_2\right)^\frac{1}{2} \left(\sqrt{\kappa} (1 +\tau) \|\nabla^3 \eta\|_1\right)^\frac{1}{2} \|\nabla^3 u\|_1 \right)\nonumber\\
			& \lesssim \kappa^{-\frac{1}{2}} \left((1 +\tau)^{-\frac{7}{4}} \sqrt{\mathcal{E}_0}\mathcal{S}(t) +(1 +\tau)^{-\frac{3}{4}} \sqrt{\mathcal{E}^0} \|\nabla \Delta u\|_1 \right). \label{4.12}
		\end{align}
		Then, the desired estimate \eqref{4.3} can be deduced by inserting \eqref{4.7}, \eqref{4.11} and \eqref{4.12} into \eqref{4.10}.
	\end{proof}
	We now proceed to estimate the norms associated with $\mathcal{N}_P$.
	\begin{lem}
		\label{4.21}
		Let  $\tau \in [0, t]$ with any given $t \geqslant 0$, we have\begin{align}
			\|\mathcal{N}_P(\tau)\|_{L^1}, \|\nabla^k \mathcal{N}_P(\tau)\|_0 \lesssim \kappa^{-\frac{1}{2}}(1 +\tau)^{-\frac{7}{4}}\sqrt{\mathcal{E}^0}\mathcal{S}(t),
			\label{4.14}
		\end{align}
		where $k=0$, $1$ and $2$.
	\end{lem}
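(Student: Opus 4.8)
The plan is to follow the proof of Lemma~\ref{4.13} almost verbatim, the key structural fact being that every term of $\mathcal{N}_P$ is at least quadratic in $\nabla\eta$ together with its derivatives. First I would record the needed bounds on the ``coefficients'': by Lemma~\ref{4.58} one has $\|J-1\|_2+\|J^{-1}-1\|_2+\|\tilde{\mathcal{A}}\|_2\lesssim\|\nabla\eta\|_2$, $\|\mathcal{A}\|_{L^\infty}\lesssim 1$, and $\|\nabla J^{-1}\|_2\lesssim\|\nabla^2\eta\|_2$ (a consequence of \eqref{4.33}); moreover, since $P\in C^2$ and $J^{-1}$ remains in a fixed compact subset of $(0,\infty)$ by \eqref{4.30}, the quantities $P(J^{-1})-P(1)$ and their derivatives obey the same bounds (the gradient losing one derivative), while the Taylor remainder $\int_0^{J^{-1}-1}\big((J^{-1}-1)-z\big)\frac{\mm{d}^2}{\mm{d}z^2}P(1+z)\,\mm{d}z$ is $O\big((J^{-1}-1)^2\big)$ with analogous estimates for its derivatives. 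Consequently $\mathcal{N}_P$ is a finite sum of products, each containing at least two factors that are derivatives of $\eta$ of order $\geqslant 1$; in addition the contribution $\nabla\!\big(P'(1)(J^{-1}-1+\mm{div}\eta)\big)$ carries the extra smallness $\|\nabla(J^{-1}-1+\mm{div}\eta)\|_2\lesssim\|\nabla^2\eta\|_2\|\nabla^3\eta\|_1$ from \eqref{4.33}.

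Then I would estimate $\|\mathcal{N}_P\|_{L^1}$ and $\|\nabla^k\mathcal{N}_P\|_0$ for $k=0,1,2$ term by term, using Leibniz's rule together with Hölder's, Sobolev's and Gagliardo--Nirenberg's inequalities. The guiding principle is to split each product of $\eta$-factors so that exactly one factor is estimated through \eqref{0.11} --- via $\sqrt{\kappa}\|\nabla\eta\|_2\lesssim\sqrt{\mathcal{E}^0}$, $\sqrt{\kappa}(1+\tau)^{1/2}\|\Delta\eta\|_2\lesssim\sqrt{\mathcal{E}^0}$, or $\sqrt{\kappa}(1+\tau)\|\nabla^3\eta\|_1\lesssim\sqrt{\mathcal{E}^0}$, each of which furnishes the factor $\kappa^{-1/2}\sqrt{\mathcal{E}^0}$ --- while the remaining factor(s) are bounded by the decaying pieces of $\mathcal{S}(t)$, e.g. $\|\nabla\eta\|_0\lesssim(1+\tau)^{-5/4}\mathcal{S}(t)$ and $\|\nabla^2\eta\|_0\lesssim(1+\tau)^{-7/4}\mathcal{S}(t)$, after interpolating the intermediate-order norms that appear in the Leibniz expansion. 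The $L^1$-estimate follows from $\|fg\|_{L^1}\leqslant\|f\|_0\|g\|_0$ and the same bookkeeping. In every term the temporal weights add up to at least $(1+\tau)^{-7/4}$ and the powers of $\kappa$ to at least $\kappa^{-1/2}$ (for $\kappa\geqslant1$), which gives \eqref{4.14}; this is also precisely the rate needed so that, after inserting \eqref{4.14} into the Duhamel formula, the convolution against the linear decay $(1+t-\tau)^{-3/4-k/2}$ still produces $(1+t)^{-3/4-k/2}$.

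The main obstacle is the case $k=2$: differentiating the quadratic expressions three times forces the appearance of $\nabla^4\eta$ (from $\nabla^2\nabla(J^{-1}-1+\mm{div}\eta)$ and from $\nabla^2$ of $(J-1)\nabla_{\mathcal{A}}P(J^{-1})$ and $\nabla_{\tilde{\mathcal{A}}}P(J^{-1})$), and the decay and $\kappa$ budgets are then tight. The remedy is to always keep the top-order derivative ($\nabla^4\eta$, and $\nabla^3\eta$ where it occurs) in $L^2$, controlling it by $\kappa^{-1/2}(1+\tau)^{-1}\sqrt{\mathcal{E}^0}$ via \eqref{0.11}, and to place the lower-order factors in $L^\infty$, $L^3$ or $L^6$ by Gagliardo--Nirenberg so that they carry the residual temporal decay. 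This is essentially a simpler variant of the computations \eqref{4.4}--\eqref{4.12}: simpler because $\mathcal{N}_P$ involves no $u$, so no term of the form $(1+\tau)^{-3/4}\|\nabla\Delta u\|_1$ survives and the bound \eqref{4.14} holds uniformly for $k=0,1,2$.
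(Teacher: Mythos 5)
Your strategy is correct and matches the paper's in its essentials: identify that every term of $\mathcal{N}_P$ is at least quadratic in $\nabla\eta$ and its derivatives, use Lemma~\ref{4.58} (in particular \eqref{4.30}, \eqref{4.31}, \eqref{4.33}) to reduce $\mathcal{N}_P$ to expressions schematically of the form $|\nabla\eta||\nabla^2\eta|$ (and, after differentiating, $|\nabla^2\eta|^3$, $|\nabla^3\eta||\nabla^2\eta|$, $|\nabla\eta||\nabla^4\eta|$), and then split each product so that one factor is absorbed into $\kappa^{-1/2}\sqrt{\mathcal{E}^0}$ via \eqref{0.11} while the remaining factor carries the $(1+\tau)$-decay through $\mathcal{S}(t)$; your observation that the case $k=2$ is the tight one, handled by keeping $\nabla^4\eta$ in $L^2$ and putting the low-order factor in $L^\infty$, is exactly how \eqref{4.16} is established, and your remark that no $\|\nabla\Delta u\|_1$ term appears (unlike Lemma~\ref{4.13}) is the reason \eqref{4.14} holds uniformly in $k$.

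The one place you take a slightly longer road than the paper is $k=0$ and $k=1$. You propose to estimate $\|\mathcal{N}_P\|_0$ and $\|\nabla\mathcal{N}_P\|_0$ term by term in the same fashion; the paper instead, having obtained the $L^1$ bound \eqref{4.15} and the $\dot H^2$ bound \eqref{4.16}, simply interpolates via Gagliardo--Nirenberg on $\mathbb{R}^3$,
\begin{equation*}
\|\mathcal{N}_P\|_0\lesssim\|\mathcal{N}_P\|_{L^1}^{4/7}\|\nabla^2\mathcal{N}_P\|_0^{3/7},
\qquad
\|\nabla\mathcal{N}_P\|_0\lesssim\|\mathcal{N}_P\|_{L^1}^{2/7}\|\nabla^2\mathcal{N}_P\|_0^{5/7},
\end{equation*}
which yields the $k=0,1$ bounds immediately since both endpoint rates are the same $\kappa^{-1/2}(1+\tau)^{-7/4}\sqrt{\mathcal{E}^0}\mathcal{S}(t)$. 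Your direct approach would also close (the bookkeeping works out, as you indicate), but the interpolation step is cheaper and worth adopting; the rest of your argument is sound and would compile into a valid proof.
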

	\begin{proof}
		By \eqref{0.11} and Lemma \ref{4.58}, we have
		\begin{align}
			\|\mathcal{N}_P(\tau)\|_{L^1} & \lesssim \||\nabla \eta ||\nabla^2 \eta|\|_{L^1} \lesssim \|\nabla \eta\|_0 \|\nabla^2 \eta\|_0 \nonumber\\
			& \lesssim \kappa^{-\frac{1}{2}} (1 +\tau)^{-\frac{7}{4}}  \sqrt{\mathcal{E}^0}\mathcal{S}(t).
			\label{4.15}
		\end{align}
		Using H$\mathrm{\ddot{o}}$lder's and Sobolev's inequalities, it can be derived that
		\begin{align}
			\|\nabla^2 \mathcal{N}_P(\tau) \|_0 & \lesssim \||\nabla^2 \eta|^3\|_0 +\||\nabla^3 \eta | |\nabla^2 \eta|\|_0 +\||\nabla \eta ||\nabla^4 \eta|\|_0 \nonumber\\
			& \lesssim \|\nabla^2 \eta\|_{L^6}^3 +\|\nabla^3 \eta\|_{L^3} \|\nabla^2 \eta\|_{L^6} +\|\nabla \eta\|_{L^\infty} \|\nabla^4 \eta\|_0 \nonumber\\
			& \lesssim \|\nabla^3 \eta\|_1^2 +\|\nabla^2 \eta\|_0^\frac{1}{2} \|\nabla^3 \eta\|_0^\frac{1}{2} \|\nabla^3 \eta\|_1 \nonumber\\
			&\lesssim \kappa^{-1}(1 +\tau)^{-2} \left(\sqrt{\kappa}(1 +\tau) \|\nabla^3 \eta\|_1\right)^2 \nonumber\\
			&\quad +\kappa^{-\frac{3}{4}} (1 +\tau)^{-\frac{19}{8}}\left((1 +\tau)^{7/4} \|\nabla^2 \eta\|_0\right)^\frac{1}{2} \left(\sqrt{\kappa}(1 +\tau) \|\nabla^3 \eta\|_1 \right)^\frac{3}{2} \nonumber\\
			& \lesssim \kappa^{-\frac{1}{2}} (1 +\tau)^{-\frac{7}{4}} \sqrt{\mathcal{E}^0} \mathcal{S}(t),
			\label{4.16}
		\end{align}
		which implies the estimate \eqref{4.14} for $k =2$. By  \eqref{4.15} and \eqref{4.16}, we get
		\begin{align}
			& \|\mathcal{N}_P(\tau)\|_0  \lesssim \|\mathcal{N}_P\|_{L^1}^\frac{4}{7} \|\nabla^2 \mathcal{N}_P\|_0^\frac{3}{7} \lesssim \kappa^{-\frac{1}{2}} (1 +\tau)^{-\frac{7}{4}}\sqrt{\mathcal{E}^0}\mathcal{S}(t), \nonumber\\
			& \|\nabla \mathcal{N}_P(\tau)\|_0  \lesssim \| \mathcal{N}_P\|_{L^1}^\frac{2}{7} \|\nabla^2 \mathcal{N}_P\|_0^\frac{5}{7} \lesssim \kappa^{-\frac{1}{2}} (1 +\tau)^{-\frac{7}{4}}\sqrt{\mathcal{E}^0}\mathcal{S}(t), \nonumber
		\end{align}
		which lead to \eqref{4.14} for $k =0$ and $1$.
	\end{proof}
	
	\emph{Now we are in the position for the proof of  Theorem \ref{1.24}}.  
		To this end, it suffices to prove that for any given $T \geqslant 0$, there exists a positive constant $c_3$, independent of $T$,  such that
	\begin{align}
		\mathcal{S}(T) \leqslant c_3. \label{4.50}
	\end{align}
	
Let us first recall a well-known result  (see e.g., \cite[Lemma 2.5] {MR2325837} for the proof):
		let the real numbers $r_1 >0$ and $r_2 \in [0, r_1]$, then there is a positive constant $\tilde{c}$, depending only on $r_1$ and $r_2$, such that
		\begin{align}
			\int_0^t (1 +t -\tau)^{-r_1} (1 +\tau)^{-r_2} \mm{d}\tau \leqslant \tilde{c}( 1 +t)^{-r_2}
			\mbox{ for any }t \geqslant 0.  \label{4.74}
	\end{align}
Thus, for any given $t \in \overline{I_T}$, by Proposition \ref{4.8}, Lemmas \ref{4.13}--\ref{4.21} and \eqref{4.74}, we can derive from \eqref{4.29} that
	\begin{align}
		\| {(\eta, u)}\|_0 & \lesssim  \|  {(\eta^l, u^l)} \|_0 +\int_0^t \|  {e^{(t -\tau)G} \mathcal{H}(\tau)}\|_0 \mathrm{d}\tau \nonumber\\
		& \lesssim (1 +t)^{-\frac{3}{4}} \left( \|(\eta^0, u^0)\|_{L^1} +\| (\eta^0, u^0)\|_0 \right) \nonumber\\
		&\quad +\int_0^t (1+ t-\tau)^{-\frac{3}{4}} \left( \|\mathcal{N}(\tau)\|_{L^1} +\| \mathcal{N}(\tau)\|_0 \right) \mathrm{d}\tau \nonumber\\
		& \lesssim (1 +t)^{-\frac{3}{4}} \left( \|(\eta^0, u^0)\|_{L^1} +\| (\eta^0, u^0)\|_0 \right) \nonumber\\
		& \quad +\int_0^t (1 +t -\tau)^{-\frac{3}{4}} \left(\|(\mathcal{N}^u(\tau),\mathcal{N}_P(\tau))\|_{L^1}+\|(\mathcal{N}^u(\tau),\mathcal{N}_P(\tau))\|_0\right) \mathrm{d}\tau \nonumber\\
		& \lesssim  (1 +t)^{-\frac{3}{4}} \left( \|(\eta^0, u^0)\|_{L^1} +\| (\eta^0, u^0)\|_0 \right) \nonumber\\
		&\quad +\kappa^{-\frac{1}{2}}\sqrt{\mathcal{E}^0} \int_0^t (1 +t -\tau)^{-\frac{3}{4}} (1 +\tau)^{-\frac{7}{4}} \mathcal{S}(t) \mathrm{d}\tau \nonumber\\
		&\lesssim  (1 +t)^{-\frac{3}{4}}  \left( \|(\eta^0, u^0)\|_{L^1} +\| (\eta^0, u^0)\|_0 +\kappa^{-\frac{1}{2}}\sqrt{\mathcal{E}^0}\mathcal{S}(T) \right) ,
		\label{4.17}
		\\
		\| {\nabla (\eta, u)} \|_0 & \lesssim \| {\nabla (\eta^l, u^l)} \|_0 +\int_0^t \| {\nabla( e^{(t -\tau)G} \mathcal{H}(\tau)})\|_0 \mathrm{d}\tau \nonumber\\
		& \lesssim (1 +t)^{-\frac{5}{4}} \left( \|(\eta^0, u^0)\|_{L^1} +\|\nabla (\eta^0, u^0)\|_0 \right) \nonumber\\
		&\quad +\int_0^t (1+ t-\tau)^{-\frac{5}{4}} \left( \|\mathcal{N}(\tau)\|_{L^1} +\|\nabla \mathcal{N}(\tau)\|_0 \right) \mathrm{d}\tau \nonumber\\
		& \lesssim (1 +t)^{-\frac{5}{4}} \left( \|(\eta^0, u^0)\|_{L^1} +\|\nabla (\eta^0, u^0)\|_0 \right) \nonumber\\
		& \quad +\int_0^t (1 +t -\tau)^{-\frac{5}{4}} \left(\|(\mathcal{N}^u(\tau),\mathcal{N}_P(\tau))\|_{L^1}+
		\|\nabla (\mathcal{N}^u(\tau), \mathcal{N}_
		P(\tau))\|_0 \right) \mathrm{d}\tau \nonumber\\
		& \lesssim  (1 +t)^{-\frac{5}{4}} \left( \|(\eta^0, u^0)\|_{L^1} +\|\nabla (\eta^0, u^0)\|_0 \right) \nonumber\\
		&\quad +\kappa^{-\frac{1}{2}}\sqrt{\mathcal{E}^0} \int_0^t (1 +t -\tau)^{-\frac{5}{4}} (1 +\tau)^{-\frac{7}{4}}\mathcal{S}(t) \mathrm{d}\tau \nonumber\\
		&\lesssim  (1 +t)^{-\frac{5}{4}} \left( \|(\eta^0, u^0)\|_{L^1} +\|\nabla (\eta^0, u^0)\|_0 +\kappa^{-\frac{1}{2}}\sqrt{\mathcal{E}_0}\mathcal{S}(T)\right) .
		\label{4.22}
	\end{align}
	Similarly, it holds that
	\begin{align}
		\| {\nabla^2 (\eta, u)}\|_0 & \lesssim \| {\nabla^2 (\eta^l, u^l)} \|_0 +\int_0^t \| {\nabla^2 (e^{(t -\tau) G} \mathcal{H}(\tau)})\|_0 \mathrm{d}\tau \nonumber\\
		& \lesssim (1 +t)^{-\frac{7}{4}} \left( \|(\eta^0, u^0)\|_{L^1} +\|\nabla^2(\eta^0, u^0)\|_0 \right) \nonumber\\
		& \quad +\int_0^t (1 +t -\tau)^{-\frac{7}{4}} \big(\|(\mathcal{N}^u(\tau),\mathcal{N}_P(\tau))\|_{L^1}+
		\|\nabla^2 (\mathcal{N}^u(\tau),\mathcal{N}_P(\tau))\|_0 \big) \mathrm{d}\tau \nonumber\\
		& \lesssim (1 +t)^{-\frac{7}{4}} \left( \|(\eta^0, u^0)\|_{L^1} +\|\nabla^2(\eta^0, u^0)\|_0 \right) \nonumber\\
		& \quad +\kappa^{-\frac{1}{2}} \mathcal{S}(t)\int_0^t (1 +t-\tau)^{-\frac{7}{4}}
		\left( (1 +\tau)^{-\frac{7}{4}} +
		(1 +\tau)^{-\frac{3}{4}} \|\nabla \Delta u\|_1   \right)\mathrm{d}\tau \nonumber\\
		& \lesssim (1 +t)^{-\frac{7}{4}} \big( \|(\eta^0, u^0)\|_{L^1} +\|\nabla^2(\eta^0, u^0)\|_0 \nonumber \\
		&\quad +\kappa^{-\frac{1}{2}} \mathcal{S}(t)(\sqrt{\mathcal{E}^0} + \kappa^{-\frac{1}{2}}
		\|(1  +\tau)\nabla \Delta u\|_{L^2(I_t; H^1)})\big)\nonumber\\
		& \lesssim (1 +t)^{-\frac{7}{4}}\left( \|(\eta^0, u^0)\|_{L^1} +\|\nabla^2(\eta^0, u^0)\|_0 +\kappa^{-\frac{1}{2}}\sqrt{\mathcal{E}^0} \mathcal{S}(T)\right).
		\label{4.23}
	\end{align}
	
	Consequently, we immediately derive from \eqref{0.11} and  {\eqref{4.17}}--\eqref{4.23} that
	\begin{align}
		\mathcal{S}(T) \leqslant c_4 \left( \|(\eta^0, u^0)\|_{L^1} +
		\sqrt{\mathcal{E}^0} \right) + c_4 \kappa^{-\frac{1}{2}} \sqrt{\mathcal{E}^0} \mathcal{S}(T),
		\label{4.26}
	\end{align}
	where $c_4$ is a positive constant independent on $T$.
	In addition, by Young's inequality and \eqref{1.22}, we have
	$$\kappa^{-\frac{1}{2}}\sqrt{\mathcal{E}^0} \lesssim \kappa^{-\frac{1}{2}} \max\left\{ (2 c_2
	\mathcal{E}^0)^\frac{1}{4}, 2 c_2\mathcal{E}^0 \right\} \lesssim \sqrt{c_1}
	.$$
Then,  from the smallness of $c_1$, the above estimate and \eqref{4.26}, we can obtain \eqref{4.50}  with 
		$$c_3: = 2c_4 \left(  \|(\eta^0, u^0)\|_{L^1} + \sqrt{\mathcal{E}^0} \right).$$ The desired decay estimate \eqref{1.26} follows from the boundedness of $\mathcal{S}(T)$. This completes  the proof of  Theorem \ref{1.24}.
	
	\vspace{4mm} \noindent\textbf{Acknowledgements.}
	The research of Fei Jiang was supported by NSFC (Grant Nos. 12022102 and 12231016), and the Natural Science Foundation of Fujian Province of China (Nos. 2022J01105 and 2024J01610017) and the Central Guidance on Local Science and Technology Development Fund of Fujian Province.
	\renewcommand\refname{References}
	\renewenvironment{thebibliography}[1]{%
		\section*{\refname}
		\list{{\arabic{enumi}}}{\def\makelabel##1{\hss{##1}}\topsep=0mm
			\parsep=0mm
			\partopsep=0mm\itemsep=0mm
			\labelsep=1ex\itemindent=0mm
			\settowidth\labelwidth{\small[#1]}%
			\leftmargin\labelwidth \advance\leftmargin\labelsep
			\advance\leftmargin -\itemindent
			\usecounter{enumi}}\small
		\def\newblock{\ }
		\sloppy\clubpenalty4000\widowpenalty4000
		\sfcode`\.=1000\relax}{\endlist}
	\bibliographystyle{model1b-num-names}
	\bibliography{refs}
\end{CJK*}
\end{document}